\documentclass[11pt]{article}
\usepackage[margin=1in]{geometry}
\usepackage{amsmath,amssymb,amsthm,mathtools}
\usepackage{mathrsfs}
\usepackage{booktabs,array}
\usepackage{enumitem}
\usepackage{microtype}
\usepackage[hypertexnames=false,hidelinks]{hyperref}

\newtheorem{theorem}{Theorem}[section]
\newtheorem{proposition}[theorem]{Proposition}
\newtheorem{lemma}[theorem]{Lemma}
\newtheorem{corollary}[theorem]{Corollary}
\theoremstyle{definition}
\newtheorem{definition}[theorem]{Definition}
\theoremstyle{remark}
\newtheorem{remark}[theorem]{Remark}

\newcommand{\R}{\mathbb{R}}
\newcommand{\Q}{\mathbb{Q}}

\newcommand{\Ical}{\mathcal I}

\title{Collision Positivity for Symmetric Orbit-Sum Inequalities in Arbitrary Dimension:\\
A Complete Two-Variable Criterion}
\author{Jian Sun}
\date{}

\begin{document}

\maketitle

\begin{abstract}
Let $\lambda\succ\gamma\succ\mu$ be equal-degree nonnegative integer
exponent vectors with at most $n$ parts, let $J_\eta^{(n)}$ be the labeled
symmetric orbit sum associated with $\eta$, and set
\begin{equation*}
P_n=J_\lambda^{(n)}+J_\mu^{(n)}-2J_\gamma^{(n)}.
\end{equation*}
For every $n\ge4$, we prove that global nonnegativity of $P_n$ on the
positive orthant is equivalent to nonnegativity on the fixed two-variable
section
\begin{equation*}
(x,y,1,\ldots,1),\qquad x,y>0.
\end{equation*}
Thus the positivity of this family is determined by a fixed two-dimensional
section, independently of both degree and ambient dimension.  In the
injective hierarchy $C_k$ introduced below, the hypothesis is $C_2\ge0$,
global positivity is equivalent to $C_{n-1}\ge0$, and the main step is the
stable order-raising implication
\begin{equation*}
C_k\ge0\Longrightarrow C_{k+1}\ge0,
\qquad 2\le k\le n-2.
\end{equation*}
For $n\ge5$, this rank-two hypothesis is strictly weaker than positivity on
the normalized full collision wall, which corresponds to $C_{n-2}\ge0$.

The result complements the preceding three-variable theorem
\emph{Collision Positivity for Three-Variable Symmetric Monomial
Inequalities: A Complete Boundary Criterion}, which proves the exceptional
rank-one step $C_1\Rightarrow C_2$.  The proof here is logically independent:
the local three-label ingredients needed in higher rank are established
directly.  The main tools are a Green second-compound reserve, fixed-union
exchange on adjacent subset ranks, a rank-three zero/one/two-mark
contraction, and terminal boundary recombination.  The theorem extends to
nonnegative rational exponents by clearing denominators.
\end{abstract}

\noindent\textbf{Keywords.}
Majorization; Muirhead inequality; Schur inequality; symmetric polynomial
inequality; orbit sums; collision positivity; total positivity; Green kernel.

\section{Introduction}

For a nonnegative exponent vector $\nu=(\nu_1,\ldots,\nu_n)$, write
$J_\nu^{(n)}$ for its labeled symmetric orbit sum.  If
$\lambda\succ\gamma$ have the same total degree, Muirhead's inequality gives
\begin{equation*}
J_\lambda^{(n)}\ge J_\gamma^{(n)}
\qquad\text{on }\mathbb R_{>0}^n.
\end{equation*}
Hence an equal-degree chain
\begin{equation*}
\lambda\succ\gamma\succ\mu
\end{equation*}
produces two nonnegative Muirhead gaps.  We ask when the first gap dominates
the second, equivalently when
\begin{equation*}
P_n:=J_\lambda^{(n)}+J_\mu^{(n)}-2J_\gamma^{(n)}
\end{equation*}
is nonnegative on the positive orthant.  Majorization alone does not
determine the sign of this second difference.

The three-variable case has a stronger boundary reduction.  In the
preceding paper \emph{Collision Positivity for Three-Variable Symmetric
Monomial Inequalities: A Complete Boundary Criterion} \cite{SunThree},
nonnegativity of $P_3(t,1,1)$ for all $t>0$ is equivalent to nonnegativity of
$P_3$ on $\mathbb R_{>0}^3$.  In the hierarchy used below, that result is the
exceptional implication
\begin{equation*}
C_1\ge0\Longrightarrow C_2\ge0.
\end{equation*}
The stable higher-dimensional analogue begins at rank two.  Its proof is
logically independent of the three-variable argument.

\begin{theorem}[Two-variable positivity criterion]\label{thm:main}
Let $n\ge4$, and let
\begin{equation*}
\lambda\succ\gamma\succ\mu
\end{equation*}
be equal-degree nonnegative integer exponent vectors with at most $n$ parts.
Then
\begin{equation*}
P_n(x,y,1,\ldots,1)\ge0
\qquad\text{for all }x,y>0
\end{equation*}
if and only if
\begin{equation*}
P_n(x_1,\ldots,x_n)\ge0
\qquad\text{for all }x_i>0.
\end{equation*}
\end{theorem}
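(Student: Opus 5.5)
The forward direction is the only nontrivial one: if $P_n\ge0$ on $\R_{>0}^n$ then it is in particular nonnegative on the section $(x,y,1,\ldots,1)$. For the converse we follow the injective hierarchy announced in the abstract. We define $C_k$ as the restriction of $P_n$ to points with at most $k$ free coordinates and the remaining $n-k$ coordinates equal to $1$, i.e.\ to $(x_1,\ldots,x_k,1,\ldots,1)$. By homogeneity and symmetry, $C_{n-1}\ge0$ is the same as global positivity: scale so that one coordinate equals $1$. The hypothesis of Theorem~\ref{thm:main} is exactly $C_2\ge0$. So it suffices to prove the order-raising step $C_k\ge0\Rightarrow C_{k+1}\ge0$ for $2\le k\le n-2$ and then induct.

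For the inductive step we fix $k$ and study $P_n$ on $(x_1,\ldots,x_{k+1},1,\ldots,1)$ with at least one coordinate equal to $1$ left over; this is where $n\ge4$ and $k\le n-2$ are used. Since $P_n$ is symmetric, we may assume $x_1\le\cdots\le x_{k+1}$. We then consider a minimizer of $P_n$ on a compact slice, using homogeneity together with the leading behaviour at the boundary, which is controlled by the Muirhead gaps. We would like to show that at a minimizer two of the free coordinates coincide, or one of them equals $1$; either way the point lies in the $C_k$ region. To do this we expand $P_n$ in the variable pair $(x_i,x_j)$ using the orbit-sum structure. Each labeled orbit sum splits as a sum over the exponents assigned to $x_i,x_j$ of symmetric two-variable terms $x_i^a x_j^b+x_i^b x_j^a$, weighted by orbit sums in the other variables. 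The resulting coefficients form a second difference in subset ranks. We then organise this with a fixed-union exchange, pairing subsets of adjacent ranks that have the same union, so that the $(x_i,x_j)$-dependence appears as a combination of the functions $(x_i^a-x_j^a)(x_i^b-x_j^b)$.

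The key positivity input is a Green-kernel or second-compound reserve. The first Muirhead gap $J_\lambda-J_\gamma$ can be written via Robin Hood transfers as a positive combination of elementary transfer terms. The claim to prove is that the $2\times2$ compounds of the associated kernel are nonnegative, so that this gap carries enough reserve to absorb the negative part of $J_\gamma-J_\mu$ once two coordinates are separated. On three labels we need a local contraction: at rank three, mark each label by whether it carries zero, one or two of the active exponents. We then show that the three-label contribution reduces to a nonnegative combination of two-variable sections, proving this directly rather than citing \cite{SunThree}.

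The main obstacle is the terminal case. This is when the minimizer has all $k+1$ free coordinates distinct and different from $1$, so neither a collision nor a wall hit is forced by the local argument. Here we would try boundary recombination: push the extremal coordinates $x_1$ and $x_{k+1}$ toward each other along a path that preserves a suitable mean, or toward the fixed value $1$. We would show that the derivative along this path has a sign determined by the reserve inequality, so that $P_n$ is nonincreasing until a collision occurs, giving $C_{k+1}\ge0$ from $C_k\ge0$. Finally, rational exponents are handled by substituting $x_i=t_i^N$ with $N$ a common denominator.
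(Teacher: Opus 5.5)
\textbf{The inductive step does not close.} Your outer reduction is the paper's: $C_2\ge0$ is the hypothesis, $C_{n-1}\ge0$ is global positivity by homogeneity, and everything rests on the raise $C_k\ge0\Rightarrow C_{k+1}\ge0$ for $2\le k\le n-2$. The problem is your claim that a coincidence $x_i=x_j$ ``lies in the $C_k$ region''.

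Up to symmetry and rescaling, $C_k\ge0$ controls exactly the points at which some value is repeated at least $n-k$ times. Take a generic point $(x_1,\ldots,x_{k+1},1,\ldots,1)$ with $x_i=x_j=s\ne1$. The value $1$ appears $n-k-1$ times and $s$ appears twice. So for $k\le n-3$ it is not a $C_k$ point. For instance, with $n=5$ and $k=2$, the point $(s,s,x,1,1)$ is not covered by $C_2\ge0$ under any rescaling.

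Coordinate coincidences lead to the collision wall, i.e.\ to $C_{n-2}$. The paper stresses that $C_{n-2}\ge0$ is strictly stronger than the hypothesis $C_2\ge0$ once $n\ge5$. So the first alternative of your dichotomy fails. Even a complete minimizer analysis of the kind you describe would only prove the case $n=4$, where the single step has $k=n-2$.

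There are two further gaps. First, the claim that a minimizer has a coincidence or a unit coordinate is the whole content of the step, and you do not supply it. You name fixed-union exchange, the Green reserve, the three-mark contraction and boundary recombination, but state no inequality fixing the sign of a derivative along an identified path, and the ``suitable mean'' is never specified. Second, existence of a minimizer is not justified. The function $P_n(x_1,\ldots,x_{k+1},1,\ldots,1)$ is not homogeneous in the free variables, so homogeneity gives no compact slice.

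\textbf{The missing idea} is the one that keeps the argument inside the rank-$k$ family. The paper never uses coordinate coincidences. It uses the exact injective recursion of Lemma~\ref{lem:injective-recursion}: the new variable $z$ either receives an unused exponent label or shares the label of some $x_j$. The second case produces $\Ical_k^\nu(x_1,\ldots,x_jz,\ldots,x_k)$ with the $n-k$ unit coordinates untouched.

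Subtracting the three copies gives $C_{k+1}=\mathcal R_k$ (Proposition~\ref{prop:green}). This expresses $C_{k+1}(\mathbf x,z)$ through $C_k(\mathbf x)$, $C_1(z)$, the Muirhead products $A_1A_k+B_1B_k$, and the composed values $C_k(x_1,\ldots,x_jz,\ldots,x_k)$. The hypothesis $C_k\ge0$ therefore enters directly, as the wall identity $\mathcal R_k(\mathbf x,1)=(n-k)C_k(\mathbf x)$ already shows.

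What remains is the composition inequality~\eqref{eq:composition}. The paper attacks it at the coefficient level: projective ordering of the physical coefficient rows, plus a separately established terminal coefficient sign. It does not use a monotone deformation toward a wall. Your proposal has no counterpart of either the recursion or the composition inequality, so its induction has no mechanism that lands in the $C_k$ family.
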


The reverse implication is immediate.  The substantive assertion is that a
fixed two-dimensional test suffices in every ambient dimension.  For
$n\ge5$, this requires positivity only on the strictly smaller stratum
$(x,y,1,\ldots,1)$ rather than on the full collision wall.

\subsection*{The injective hierarchy and stable order raising}
For the injective defects $C_k$ defined in Section~\ref{sec:majorization},
Equation~\eqref{eq:P-C} gives
\begin{equation*}
P_n(x_1,\ldots,x_k,1,\ldots,1)=(n-k)!\,C_k(x_1,\ldots,x_k).
\end{equation*}
Consequently the hypothesis of Theorem~\ref{thm:main} is exactly $C_2\ge0$.
At the other end, homogeneity shows that global positivity is equivalent to
$C_{n-1}\ge0$.

The full normalized collision wall has two equal coordinates; after scaling
the common value to $1$ it has the form
\begin{equation*}
(y_1,\ldots,y_{n-2},1,1),
\end{equation*}
so collision-wall positivity is $C_{n-2}\ge0$.  Thus $C_2=C_{n-2}$ only
when $n=4$.

\begin{theorem}[Stable order-raising]\label{thm:order-raising}
Let $n\ge4$ and $2\le k\le n-2$.  If
\begin{equation*}
C_k(x_1,\ldots,x_k)\ge0
\qquad(x_i>0),
\end{equation*}
then
\begin{equation*}
C_{k+1}(x_1,\ldots,x_{k+1})\ge0
\qquad(x_i>0).
\end{equation*}
\end{theorem}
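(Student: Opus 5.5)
We reduce Theorem~\ref{thm:order-raising} to a statement about a single coordinate direction. Fix $k$ with $2\le k\le n-2$ and assume $C_k\ge0$ on $\R_{>0}^k$. By \eqref{eq:P-C}, $C_{k+1}(x_1,\ldots,x_{k+1})$ is (up to the factor $(n-k-1)!$) the restriction of $P_n$ to the stratum where the last $n-k-1\ge1$ coordinates equal $1$. Setting any $x_i=1$ recovers $C_k$ up to a positive constant, so $C_{k+1}\ge0$ on the walls $\{x_i=1\}$. It therefore suffices to exclude a negative interior minimum.

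By symmetry and homogeneity we may argue by contradiction. Take a point where $C_{k+1}$ attains a negative minimum, or a minimizing sequence, and deform it toward a wall $\{x_i=1\}$ or toward a collision $x_i=x_j$. Since $n-k-1\ge1$, there is always at least one free label equal to $1$. This spare label is what makes the step "stable" and distinguishes it from the exceptional rank-one step.

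\emph{Step 1 (expansion over subsets).} Write $C_{k+1}$ as a sum over injective assignments of the exponent coordinates to the $k+1$ active labels and the inactive labels. Group the terms according to the subset $S\subseteq\{1,\ldots,k+1\}$ of active labels that receive "large" exponents. This produces a decomposition $C_{k+1}=\sum_{|S|=r}(\text{rank-}r\text{ blocks})$ in which each block is a $C_k$-type defect in the variables $x_S$ with one label frozen.

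\emph{Step 2 (fixed-union exchange).} Compare adjacent ranks $r$ and $r+1$ on a fixed union $S\cup\{j\}$. The goal is to show that the difference between the two block sums is a nonnegative combination of terms of the form $(x_i^a-x_j^a)(x_i^b-x_j^b)\,m(x)$. This is the Muirhead/Schur mechanism, which works because $\lambda\succ\gamma\succ\mu$ and the transfers between consecutive majorization steps are Robin Hood moves. We use the Green kernel of the second difference of the partial-sum sequences (the "second-compound reserve") to write $\lambda+\mu-2\gamma$ as a nonnegative combination of elementary transfers, weighted by a totally positive kernel. Total positivity of order two then yields the sign of the exchange terms.

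\emph{Step 3 (local three-label contraction).} The residual terms that the exchange does not control involve only three labels $i,j,\ell$, each marked zero, one, or two times. For these we prove a local inequality directly, bounding them by $C_k$ evaluated at the point obtained by merging one label into a spare $1$-label. Here the hypothesis $C_k\ge0$ enters. Summing over triples then gives $C_{k+1}\ge$ a nonnegative combination of $C_k$-values plus nonnegative exchange terms.

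\emph{Step 4 (boundary recombination).} Some terms sit at the extremal ranks $r=0$ and $r=k+1$, where no exchange partner exists. These must be recombined with the frozen-label boundary values $C_k(\cdot)$ from the first paragraph.

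The main obstacle will be Step~3, which must control the rank-three zero/one/two-mark terms uniformly in the degree. I would first try to write each such term as a convex combination of $C_k$ evaluated at points where two of the three labels are contracted to a common value or to $1$. Convexity would come from the Green-kernel weights. If a direct convex representation fails, the fallback is to use the two-mark terms as the reserve that absorbs negative one-mark contributions, via an AM--GM comparison $x_i^ax_j^b+x_i^bx_j^a\ge2(x_ix_j)^{(a+b)/2}$.
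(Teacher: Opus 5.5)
There is a genuine gap: the proposal never supplies the inequality that carries the hypothesis from rank $k$ to rank $k+1$, and the framework you build around it cannot supply it. Your wall observation is right; it is the paper's \eqref{eq:R-wall}, $C_{k+1}(\mathbf x,1)=(n-k)C_k(\mathbf x)$. But ``exclude a negative interior minimum'' has no mechanism behind it. $\R_{>0}^{k+1}$ is not compact, and the walls $\{x_i=1\}$ are interior hyperplanes rather than a boundary. $C_{k+1}$ is not homogeneous. A generic collision point $x_i=x_j$ is controlled by the hypothesis only when $k=n-2$.

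More decisively, the feature you single out as what makes the step work, a spare coordinate equal to $1$, is equally present for $n=4$, $k=1$. There $C_2(t,1)=3C_1(t)\ge0$ on every wall by \eqref{eq:C2C1}, yet Proposition~\ref{prop:sharpness} gives $C_2(2,2)=-464<0$. (A spare coordinate is also present in the true exceptional case $n=3$, so it does not distinguish the stable step from the exceptional one either.) Thus wall positivity plus a spare label cannot exclude interior negativity, and your outline never identifies how $k\ge2$ is used, i.e.\ what would fail at $k=1$.

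Step~3, where the hypothesis is supposed to act, is only a plan (``I would first try \dots\ the fallback is \dots''), and its direction is problematic. In the exact two-rank identity of Proposition~\ref{prop:green}, the merged values $C_k(x_1,\ldots,x_jz,\ldots,x_k)$ enter with a minus sign. So $C_k\ge0$ is information in the wrong direction about them. What must actually be shown is the upper bound \eqref{eq:composition}, and you give no identity in which contracted $C_k$-values appear with nonnegative weights.

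Steps~1 and~2 also contain claims that fail.
\begin{itemize}
\item The grouping in Step~1 is not defined (``large'' exponents). The natural grouping \eqref{eq:hamming}, by the set $S$ of exponent labels used, produces sub-vectors $\lambda_S,\gamma_S,\mu_S$ that generally have different degrees and no majorization relation. So the blocks are not $C_k$-type defects, and the hypothesis cannot be applied blockwise.
\item In Step~2, $\lambda+\mu-2\gamma$ need not be a nonnegative combination of transfers $e_i-e_j$ ($i<j$). For the chain of Proposition~\ref{prop:sharpness} it equals $(1,-2,1,0)$, whose partial sums are $1,-1,0$. In any case $J_\lambda+J_\mu-2J_\gamma$ is not a function of that vector.
\end{itemize}

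The paper proceeds quite differently. It starts from the exact identity $C_{k+1}=\mathcal R_k$ and expands $\mathcal R_k$ in ordered normal coordinates into coefficient rows $p_{\chi,\rho,m}$. It then proves \emph{without} the hypothesis that each row is projectively ordered (Proposition~\ref{prop:projective-ordering}):
\begin{itemize}
\item fixed-union exchange reduces adjacent-rank brackets to the $3\times3$ minors of Theorem~\ref{thm:mixed-kernel};
\item Theorem~\ref{thm:rank-three} handles the zero/one/two-mark layers;
\item the Green second-compound reserve pays the role-switch debts.
\end{itemize}
The hypothesis $C_k\ge0$ is used only for the single terminal coefficient $p_{\chi,\rho,M}$ (Theorem~\ref{thm:terminal-recombination}, through retained $C_k/C_1$ factors and endpoint capacity). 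Lemma~\ref{lem:terminal-propagation} then transfers that one sign to the whole row.

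This split, an unconditional projective ordering plus one conditional terminal sign, is the idea your outline lacks. Your Steps~3 and~4 reuse the paper's vocabulary but not this content. Your AM--GM fallback, with two-mark terms paying one-mark debts, is the closest point of contact. In the paper, however, that is the coefficientwise, packet-synchronized statement of Theorem~\ref{thm:endpoint-capacity}, not a pointwise estimate.
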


Section~\ref{sec:assembly} proves this theorem from the stronger composition
inequality.  Iteration gives
\begin{equation*}
\boxed{
C_2\ge0\Longrightarrow C_3\ge0\Longrightarrow\cdots
\Longrightarrow C_{n-2}\ge0\Longrightarrow C_{n-1}\ge0.}
\end{equation*}
This also explains why the present argument does not recover the sharp
$n=3$ theorem: when $n=3$ the range $2\le k\le n-2$ is empty, and $C_2$ is
already the full normalized problem.

\subsection*{Context: positivity testing for symmetric polynomials}
Polynomial nonnegativity is a basic problem in real algebraic geometry and
polynomial optimization, and symmetry can reduce its effective dimension.
For symmetric polynomials, Timofte's degree and half-degree principles give
tests on points with a degree-dependent number of distinct coordinates
\cite{Timofte2003}; Riener gave an orbit-space proof and a broader symmetric
optimization formulation \cite{Riener2012}.  Related reductions for
symmetric semialgebraic sets and sparse symmetric representations were
developed in \cite{Riener2016,RienerSchabert2024}.  Symmetry is also useful
computationally in semidefinite relaxations and positivity algorithms
\cite{RienerTheobaldJanssonLasserre2013,TimofteTimofte2021}; see also
\cite{BlekhermanParriloThomas2013} for the wider sum-of-squares and convex
algebraic-geometry setting.  Classical work on positive symmetric forms and
Schur-type inequalities includes \cite{ChoiLamReznick1991}, while recent
work on symmetric hyperbolic polynomials reconnects hyperbolicity testing
with degree-principle phenomena \cite{BlekhermanLindbergShu2025}.

Theorem~\ref{thm:main} has a different source of dimension reduction.  It
concerns the special three-orbit second difference $P_n$, and for this family
the test dimension is independent of both degree and $n$:
\begin{equation*}
\boxed{\text{two free variables suffice for every degree and every }n\ge4.}
\end{equation*}
Section~\ref{sec:sharpness} shows that one free variable does not suffice
uniformly once four exponent positions are available.

\subsection*{The first stable dimension and the proof architecture}
Dimension four is the first stable case.  There the theorem consists of the
single raise $C_2\Rightarrow C_3$, and the rank-two section is already the
full normalized collision wall.  More importantly, the complete local
rank-three geometry of the general proof already appears in dimension four;
higher $n$ introduces longer fixed-union words and additional spectators,
but no new local exterior species.  Section~\ref{sec:n4-first-stable} makes
this precise and compares the scalable Green proof with a distinct
four-variable-only terminal proof.

The proof of Theorem~\ref{thm:order-raising} has four stages.  Sections
\ref{sec:order-raising-identity}--\ref{sec:green-capacity} derive the exact
residual and its Green reserve.  Sections~\ref{sec:two-chain}--\ref{sec:marked}
establish fixed-union and marked-incidence positivity, yielding projective
ordering of the physical coefficient rows.  Section~\ref{sec:assembly}
proves the terminal coefficient directly from the raw permanently labeled
source and endpoint capacity.  Projective ordering then propagates the
terminal sign through the entire row.  Finally, Section~\ref{sec:main-proof}
iterates the raise from $C_2$ to $C_{n-1}$.

The logical spine is
\begin{equation*}
\boxed{
\begin{gathered}
\text{fixed-union exchange + marked incidence + Green reserve}
\Longrightarrow \text{projective ordering},\\
\text{raw terminal packet + endpoint capacity}
\Longrightarrow p_M\ge0,\\
\text{projective ordering + }p_M\ge0
\Longrightarrow p_m\ge0\ \text{for every physical coefficient},\\
\mathcal R_k\ge0\Longrightarrow C_{k+1}\ge0.
\end{gathered}}
\end{equation*}
The projective-bracket calculation and the raw terminal-source calculation
are kept separate throughout.  A label identifies an occurrence, not merely
a numerical exponent value; distinct labeled objects remain distinct even
when their numerical heights coincide.  Background on total positivity may
be found in \cite{Karlin,Pinkus}.

\section{Majorization and labeled orbit sums}\label{sec:majorization}

We recall the notation used throughout.  Background on majorization may be
found in \cite{HLP,MarshallOlkinArnold}.

\begin{definition}
For two vectors $\alpha,\beta\in\R^n$ written in nonincreasing order, we say
that $\alpha$ majorizes $\beta$, and write
\begin{equation*}
\alpha\succ\beta,
\end{equation*}
if
\begin{equation*}
\sum_{i=1}^r\alpha_i
\ge
\sum_{i=1}^r\beta_i
\qquad(1\le r<n),
\end{equation*}
and
\begin{equation*}
\sum_{i=1}^n\alpha_i
=
\sum_{i=1}^n\beta_i.
\end{equation*}
\end{definition}

Vectors with fewer than $n$ parts are padded with zeros.

\begin{definition}[Coefficientwise order]
Let
\begin{equation*}
F=\sum_\alpha f_\alpha X^\alpha,
\qquad
G=\sum_\alpha g_\alpha X^\alpha
\end{equation*}
be polynomials in the same variables and monomial basis.  We write
\begin{equation*}
F\ge_{\mathrm{coeff}}G
\end{equation*}
if
\begin{equation*}
f_\alpha-g_\alpha\ge0
\end{equation*}
for every exponent vector $\alpha$.  In particular,
$F\ge_{\mathrm{coeff}}0$ means that every coefficient of $F$ is
nonnegative.
\end{definition}

We use the labeled orbit convention
\begin{equation*}
J_\nu^{(n)}(x_1,\ldots,x_n)
=
\sum_{\sigma\in S_n}
x_1^{\nu_{\sigma(1)}}\cdots x_n^{\nu_{\sigma(n)}}.
\end{equation*}
Thus repeated numerical exponent values remain distinct labeled
occurrences.  This convention is convenient because every injection of
labels has the same multiplicity.  If $m_a(\nu)$ denotes the multiplicity of
the exponent value $a$ in $\nu$ and $m_\nu$ is the usual monomial symmetric
function, then
\begin{equation*}
J_\nu^{(n)}=\left(\prod_a m_a(\nu)!\right)m_\nu.
\end{equation*}
Hence the labeled convention differs from the standard monomial-symmetric
normalization only by an explicit positive multiplicity factor; the labeled
normalization is retained because those factors vary with the exponent
vector in a three-orbit second difference.

\begin{theorem}[Muirhead]\label{thm:muirhead}
If $\alpha,\beta\in\R_{\ge0}^n$ have the same total degree and
\begin{equation*}
\alpha\succ\beta,
\end{equation*}
then
\begin{equation*}
J_\alpha^{(n)}(x_1,\ldots,x_n)
\ge
J_\beta^{(n)}(x_1,\ldots,x_n)
\qquad(x_i>0).
\end{equation*}
\end{theorem}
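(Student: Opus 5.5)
We will prove Muirhead's inequality in its classical form, via Robin Hood transfers. The statement involves only $J_\alpha^{(n)}$ and $J_\beta^{(n)}$, and both are multiples of the same permutation average. So we may use either the labeled or the monomial normalization. We work directly with the labeled sum over $S_n$, which is the natural object here.

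\emph{Step 1 (reduction to a single transfer).} By the Hardy--Littlewood--P\'olya characterization of majorization \cite{HLP,MarshallOlkinArnold}, if $\alpha\succ\beta$ with equal sums, then $\beta$ is reached from $\alpha$ by finitely many $T$-transforms. Each $T$-transform replaces two coordinates $\alpha_i>\alpha_j$ by $\alpha_i-\delta,\alpha_j+\delta$ with $0\le\delta\le(\alpha_i-\alpha_j)/2$, and leaves all other coordinates fixed. Every intermediate vector is nonnegative and has the same total. Since each $J^{(n)}$ is invariant under permuting the exponent vector, we need not keep vectors sorted. The inequality is transitive, so it suffices to prove $J_\alpha^{(n)}\ge J_{\alpha'}^{(n)}$ for a single such transfer $\alpha\to\alpha'$. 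For real exponents $x^{a}=e^{a\log x}$ is well defined for $x>0$, so real $\alpha,\beta$ cause no issue.

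\emph{Step 2 (pairing permutations).} Fix the transfer acting on positions $i,j$. Write $a=\alpha_i$, $b=\alpha_j$, $a'=a-\delta$, $b'=b+\delta$. Group the permutations $\sigma$ and $\sigma\circ(i\,j)$ into pairs. In a pair, the two exponents $a,b$ land on some variables $x_p,x_q$ and the remaining exponents give a common positive factor $M=\prod_{r\ne p,q}x_r^{\nu_{\sigma(r)}}$. The pair contributes
\begin{equation*}
M\bigl(x_p^{a}x_q^{b}+x_p^{b}x_q^{a}-x_p^{a'}x_q^{b'}-x_p^{b'}x_q^{a'}\bigr)
\end{equation*}
to $J_\alpha^{(n)}-J_{\alpha'}^{(n)}$. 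This uses that the corresponding pairs for $\alpha'$ share the same $M$, because the other coordinates are unchanged.

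\emph{Step 3 (the two-variable inequality).} Set $u=x_p$, $v=x_q$, and assume without loss of generality $u\ge v$. Factor out $(uv)^{b}$ and put $s=a-b\ge 2\delta\ge0$. The bracket becomes
\begin{equation*}
(uv)^b\bigl(u^{s}+v^{s}-u^{s-\delta}v^{\delta}-u^{\delta}v^{s-\delta}\bigr)=(uv)^b\,(u^{s-\delta}-v^{s-\delta})(u^{\delta}-v^{\delta}).
\end{equation*}
Both exponents $s-\delta$ and $\delta$ are $\ge0$, so both factors are $\ge0$ when $u\ge v$. Summing over all pairs gives $J_\alpha^{(n)}-J_{\alpha'}^{(n)}\ge0$, and chaining the transfers finishes the proof.

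The only nonroutine ingredient is Step 1, the decomposition of a majorization into $T$-transforms. We cite it from \cite{HLP}. If a self-contained argument is wanted, the first thing to try is induction on the number of positions where $\alpha\ne\beta$. Take the largest index $i$ with $\alpha_i>\beta_i$ and the smallest $j>i$ with $\alpha_j<\beta_j$. Transfer $\delta=\min(\alpha_i-\beta_i,\beta_j-\alpha_j)$ from $i$ to $j$. One then checks that the new vector is still sorted, still majorizes $\beta$, and agrees with $\beta$ in at least one more coordinate.
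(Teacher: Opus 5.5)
Your proof is correct. It is the classical argument: a $T$-transform decomposition, pairing the permutations that differ only in where the two moved exponent labels land, and the two-variable factorization.

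The paper does not prove this statement. It quotes Muirhead's theorem as background, pointing to Hardy--Littlewood--P\'olya and Marshall--Olkin--Arnold. It uses the theorem only through \eqref{eq:orbitinjective}, to obtain $A_k,B_k\ge0$ in \eqref{eq:ABpositive}. So the useful comparison is with the paper's own machinery.

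For integer vectors, the saturated chains of unit balancing covers in Section~\ref{sec:green-capacity} are exactly $T$-transform chains with every $\delta=1$. The cover factorization \eqref{eq:cover-factor} is your Step~3 identity at $\delta=1$, $v=1$: $t^b(t^{s-1}-1)(t-1)=(t-1)(t^{a-1}-t^b)$. Your version with arbitrary real $\delta$ is what the statement as written requires, since the exponents lie in $\R_{\ge0}^n$. The unit-step version is what lets the paper split each Muirhead gap into finitely many permanently labeled cover cells in \eqref{eq:cell-telescope}.

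Two small remarks.

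First, with the paper's convention $x_r^{\nu_{\sigma(r)}}$, the involution that interchanges where the labels $i$ and $j$ land is $\sigma\mapsto(i\,j)\circ\sigma$. By contrast, $\sigma\circ(i\,j)$ swaps the exponents sitting on the variables $x_i,x_j$. Your verbal description gives the right pairing, so this is only a matter of which side the transposition is composed on.

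Second, your self-contained sketch of the decomposition does go through:
\begin{itemize}
\item For $i<l<j$ one has $\alpha_l=\beta_l$, so the partial-sum slack for $i\le r<j$ is at least $\alpha_i-\beta_i\ge\delta$, and majorization is preserved.
\item Since $\beta_i\ge\beta_j$, one has $2\delta\le(\alpha_i-\beta_i)+(\beta_j-\alpha_j)\le\alpha_i-\alpha_j$. So the transfer satisfies your bound $\delta\le(\alpha_i-\alpha_j)/2$ and keeps the vector sorted.
\end{itemize}
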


We next introduce the injective hierarchy connecting the two-variable test
to the full polynomial.

\begin{definition}[Injective orbit sum]
For $1\le k\le n$, define
\begin{equation*}
\Ical_k^\nu(x_1,\ldots,x_k)
=
\sum_{\phi:[k]\hookrightarrow[n]}
\prod_{r=1}^k x_r^{\nu_{\phi(r)}}.
\end{equation*}
\end{definition}

Setting the remaining variables equal to $1$ gives
\begin{equation*}\tag{2.1}\label{eq:orbitinjective}
J_\nu^{(n)}(x_1,\ldots,x_k,1,\ldots,1)
=
(n-k)!\,\Ical_k^\nu(x_1,\ldots,x_k).
\end{equation*}

For the fixed majorization chain
\begin{equation*}
\lambda\succ\gamma\succ\mu
\end{equation*}
write
\begin{equation*}
A_k
=
\Ical_k^\lambda-\Ical_k^\gamma,
\qquad
B_k
=
\Ical_k^\gamma-\Ical_k^\mu,
\end{equation*}
and
\begin{equation*}
C_k
=
\Ical_k^\lambda+\Ical_k^\mu-2\Ical_k^\gamma
=
A_k-B_k.
\end{equation*}
Muirhead gives
\begin{equation*}\tag{2.2}\label{eq:ABpositive}
A_k\ge0,
\qquad
B_k\ge0
\qquad(1\le k\le n).
\end{equation*}

Equation \eqref{eq:orbitinjective} yields
\begin{equation*}\tag{2.3}\label{eq:P-C}
P_n(x_1,\ldots,x_k,1,\ldots,1)
=
(n-k)!\,C_k(x_1,\ldots,x_k).
\end{equation*}
In particular,
\begin{equation*}\tag{2.4}\label{eq:C2hyp}
P_n(x,y,1,\ldots,1)
=
(n-2)!\,C_2(x,y).
\end{equation*}
Hence the hypothesis of Theorem~\ref{thm:main} is exactly
\begin{equation*}
C_2(x,y)\ge0
\qquad(x,y>0).
\end{equation*}

At the opposite end, homogeneity allows any positive point to be scaled so
that one coordinate equals $1$.  Thus global positivity of $P_n$ is
equivalent to
\begin{equation*}\tag{2.5}\label{eq:Cnminus1}
C_{n-1}(x_1,\ldots,x_{n-1})\ge0
\qquad(x_i>0).
\end{equation*}

We repeatedly use the following specialization.

\begin{lemma}\label{lem:specialization}
For every $1\le k\le n$,
\begin{equation*}\tag{2.6}\label{eq:Ik-special}
\Ical_k^\nu(t,1,\ldots,1)
=
\frac{(n-1)!}{(n-k)!}\,\Ical_1^\nu(t).
\end{equation*}
Consequently,
\begin{equation*}\tag{2.7}\label{eq:Ck-special}
C_k(t,1,\ldots,1)
=
\frac{(n-1)!}{(n-k)!}\,C_1(t).
\end{equation*}
In particular,
\begin{equation*}\tag{2.8}\label{eq:C2C1}
C_2(t,1)=(n-1)C_1(t).
\end{equation*}
\end{lemma}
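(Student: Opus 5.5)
The identity \eqref{eq:Ik-special} follows by counting injections directly from the definition of $\Ical_k^\nu$. Specialize $x_1=t$ and $x_2=\cdots=x_k=1$. Then the summand attached to an injection $\phi:[k]\hookrightarrow[n]$ becomes
\begin{equation*}
t^{\nu_{\phi(1)}}\prod_{r=2}^k 1^{\nu_{\phi(r)}}=t^{\nu_{\phi(1)}},
\end{equation*}
so it depends only on the label $j=\phi(1)\in[n]$. Grouping the injections by the value of $j$, we count those with $\phi(1)=j$. The remaining values $\phi(2),\ldots,\phi(k)$ form an injection of $\{2,\ldots,k\}$ into $[n]\setminus\{j\}$. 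There are
\begin{equation*}
(n-1)(n-2)\cdots(n-k+1)=\frac{(n-1)!}{(n-k)!}
\end{equation*}
such injections, and this count does not depend on $j$. Hence
\begin{equation*}
\Ical_k^\nu(t,1,\ldots,1)=\frac{(n-1)!}{(n-k)!}\sum_{j=1}^n t^{\nu_j}.
\end{equation*}
By definition $\Ical_1^\nu(t)=\sum_{\phi:[1]\hookrightarrow[n]}t^{\nu_{\phi(1)}}=\sum_{j=1}^n t^{\nu_j}$, which proves \eqref{eq:Ik-special}.

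The labeled convention matters here: coincident exponent values $\nu_j$ are counted separately as distinct labels. This is exactly how $\Ical_1^\nu$ counts them, so no multiplicity correction arises.

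For \eqref{eq:Ck-special}, apply \eqref{eq:Ik-special} with $\nu=\lambda,\mu,\gamma$. Then take the linear combination $\Ical_k^\lambda+\Ical_k^\mu-2\Ical_k^\gamma$. The common factor $\frac{(n-1)!}{(n-k)!}$ does not depend on $\nu$, so it factors out and leaves $C_1(t)$. Finally, \eqref{eq:C2C1} is the case $k=2$, where $\frac{(n-1)!}{(n-2)!}=n-1$.

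No step here is a genuine obstacle. The only point requiring care is that the count $\frac{(n-1)!}{(n-k)!}$ is uniform in $j$ and in $\nu$, and the fiber count above establishes this.
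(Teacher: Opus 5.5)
Your proof is correct and is essentially the paper's argument. Both fix the label $\phi(1)$ assigned to $t$ and count the $\frac{(n-1)!}{(n-k)!}$ injective fillings of the remaining $k-1$ positions to get \eqref{eq:Ik-special}, then take the linear combination over $\lambda,\mu,\gamma$ to obtain \eqref{eq:Ck-special} and \eqref{eq:C2C1}. Yours simply states the fiber count and its independence from $j$ and $\nu$ more explicitly.
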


\begin{proof}
After the exponent label assigned to $t$ has been chosen, the remaining
$k-1$ variable positions can be filled injectively by
\begin{equation*}
\frac{(n-1)!}{(n-k)!}
\end{equation*}
ordered choices.  This gives \eqref{eq:Ik-special}; subtracting the three
exponent vectors gives \eqref{eq:Ck-special}.
\end{proof}

Thus
\begin{equation*}
C_k\ge0
\quad\Longrightarrow\quad
C_1\ge0
\qquad(k\ge2).
\end{equation*}

\section{The exact order-raising identity}\label{sec:order-raising-identity}

Put
\begin{equation*}
S_\nu(z)=\sum_{i=1}^n z^{\nu_i}.
\end{equation*}

\begin{lemma}[Injective recursion]\label{lem:injective-recursion}
For every exponent vector $\nu$ and every $1\le k<n$,
\begin{equation*}\tag{3.1}\label{eq:injective-recursion}
\Ical_{k+1}^\nu(\mathbf x,z)
=
S_\nu(z)\Ical_k^\nu(\mathbf x)
-
\sum_{j=1}^k
\Ical_k^\nu(x_1,\ldots,x_jz,\ldots,x_k).
\end{equation*}
\end{lemma}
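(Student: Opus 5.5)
The plan is to prove \eqref{eq:injective-recursion} by a direct combinatorial count.  We sort the terms of $S_\nu(z)\Ical_k^\nu(\mathbf x)$ according to whether the label given to $z$ collides with one of the labels already given to $x_1,\ldots,x_k$.  By definition,
\begin{equation*}
S_\nu(z)\Ical_k^\nu(\mathbf x)
=
\sum_{\phi:[k]\hookrightarrow[n]}\ \sum_{i=1}^n
z^{\nu_i}\prod_{r=1}^k x_r^{\nu_{\phi(r)}},
\end{equation*}
so the product is a sum over pairs $(\phi,i)$, where $\phi$ is an injection and $i\in[n]$ is an arbitrary label for $z$.  We split these pairs into two classes: those with $i\notin\phi([k])$, and those with $i=\phi(j)$ for some (necessarily unique) $j\in[k]$.

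First consider the pairs with $i\notin\phi([k])$.  Extending $\phi$ by $k+1\mapsto i$ gives a bijection between these pairs and the injections $\psi:[k+1]\hookrightarrow[n]$.  The corresponding monomials are exactly those of $\Ical_{k+1}^\nu(\mathbf x,z)$, with $z$ in position $k+1$.  So this class contributes precisely $\Ical_{k+1}^\nu(\mathbf x,z)$.

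Next consider the pairs with $i=\phi(j)$, and fix $j$.  These pairs correspond bijectively to injections $\phi$, with $i$ determined by $\phi$.  The monomial of such a pair is
\begin{equation*}
(x_jz)^{\nu_{\phi(j)}}\prod_{r\ne j}x_r^{\nu_{\phi(r)}},
\end{equation*}
because $x_j$ and $z$ carry the same label.  Summing over $\phi$ therefore gives $\Ical_k^\nu(x_1,\ldots,x_jz,\ldots,x_k)$.  Summing over $j=1,\ldots,k$ and rearranging yields \eqref{eq:injective-recursion}.

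The argument has no real obstacle.  The only care needed is to treat labels as occurrences rather than exponent values, as the paper's convention stresses.  With that convention, the coincidence $i=\phi(j)$ is a condition on positions in $[n]$, so there is no overcounting when $\nu$ has repeated entries.  The hypothesis $k<n$ guarantees that injections of $[k+1]$ exist, although the identity would hold formally even without it.
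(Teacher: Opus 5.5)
Your proposal is correct and matches the paper's proof. Both split the terms of $S_\nu(z)\Ical_k^\nu(\mathbf x)$ by whether the label chosen for $z$ is unused, which gives $\Ical_{k+1}^\nu(\mathbf x,z)$, or coincides with the label at a unique position $j$, which gives $\Ical_k^\nu(x_1,\ldots,x_jz,\ldots,x_k)$; you simply write out explicitly the bijections and the disjointness of the $k$ collision classes that the paper states in one line.
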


\begin{proof}
The product
\begin{equation*}
S_\nu(z)\Ical_k^\nu(\mathbf x)
\end{equation*}
chooses a label for $z$ independently of an injection of labels into the
first $k$ positions.  The desired $(k+1)$-variable injective sum consists of
the choices for which the new label is unused.  If the new label coincides
with the label at position $j$, the resulting monomial is counted by
\begin{equation*}
\Ical_k^\nu(x_1,\ldots,x_jz,\ldots,x_k).
\end{equation*}
The $k$ collision cases are disjoint, so subtracting them gives
\eqref{eq:injective-recursion}.
\end{proof}

Subtracting the identities for $\lambda,\gamma,\mu$ gives the exact
recursion underlying the proof.

\begin{proposition}[Compound Green identity]\label{prop:green}
For every $1\le k<n$,
\begin{equation*}\tag{3.2}\label{eq:compound-green}
\begin{aligned}
C_{k+1}(\mathbf x,z)
={}&
S_\gamma(z)C_k(\mathbf x)
+\Ical_k^\gamma(\mathbf x)C_1(z)\\
&+A_1(z)A_k(\mathbf x)
+B_1(z)B_k(\mathbf x)\\
&-\sum_{j=1}^k
C_k(x_1,\ldots,x_jz,\ldots,x_k).
\end{aligned}
\end{equation*}
\end{proposition}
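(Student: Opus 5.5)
The plan is to apply Lemma~\ref{lem:injective-recursion} separately to the three exponent vectors $\lambda,\gamma,\mu$ and form the combination $\lambda+\mu-2\gamma$. The collision sums on the right of \eqref{eq:injective-recursion} are linear in the orbit data, so they combine directly. For each $j$,
\begin{equation*}
\Ical_k^\lambda(\ldots,x_jz,\ldots)+\Ical_k^\mu(\ldots,x_jz,\ldots)-2\Ical_k^\gamma(\ldots,x_jz,\ldots)=C_k(x_1,\ldots,x_jz,\ldots,x_k),
\end{equation*}
which produces the last line of \eqref{eq:compound-green}. The hypothesis $k<n$ is exactly what Lemma~\ref{lem:injective-recursion} requires, so it can be invoked for all three vectors.

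What remains is the bilinear part
\begin{equation*}
S_\lambda(z)\Ical_k^\lambda(\mathbf x)+S_\mu(z)\Ical_k^\mu(\mathbf x)-2S_\gamma(z)\Ical_k^\gamma(\mathbf x).
\end{equation*}
The key observation is that $S_\nu(z)=\Ical_1^\nu(z)$. Hence $S_\lambda=S_\gamma+A_1$ and $S_\mu=S_\gamma-B_1$. Likewise, by the definitions of $A_k$ and $B_k$, we have $\Ical_k^\lambda=\Ical_k^\gamma+A_k$ and $\Ical_k^\mu=\Ical_k^\gamma-B_k$.

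Substituting these and expanding the two products gives
\begin{align*}
S_\lambda\Ical_k^\lambda&=S_\gamma\Ical_k^\gamma+S_\gamma A_k+A_1\Ical_k^\gamma+A_1A_k,\\
S_\mu\Ical_k^\mu&=S_\gamma\Ical_k^\gamma-S_\gamma B_k-B_1\Ical_k^\gamma+B_1B_k.
\end{align*}
Adding these two lines and subtracting $2S_\gamma\Ical_k^\gamma$, the $S_\gamma\Ical_k^\gamma$ terms cancel. What survives is
\begin{equation*}
S_\gamma(A_k-B_k)+\Ical_k^\gamma(A_1-B_1)+A_1A_k+B_1B_k.
\end{equation*}
Since $C_k=A_k-B_k$ and $C_1=A_1-B_1$, this is exactly the first two lines of \eqref{eq:compound-green}, with the arguments $z$ and $\mathbf x$ placed as displayed.

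There is no genuine obstacle here. The only point needing care is the sign bookkeeping for the $\mu$ terms, where $B$ enters with a minus sign in each factor. That sign is what makes the product term $B_1B_k$ appear with a plus sign. This matters for later use, since by \eqref{eq:ABpositive} both product terms $A_1A_k$ and $B_1B_k$ are then nonnegative reserves.
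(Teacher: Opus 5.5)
Your proof is correct and follows the paper's own argument: the paper likewise applies the injective recursion \eqref{eq:injective-recursion} to $\lambda,\gamma,\mu$. It substitutes $\Ical_k^\lambda=\Ical_k^\gamma+A_k$ and $\Ical_k^\mu=\Ical_k^\gamma-B_k$, together with the level-one versions $S_\lambda=S_\gamma+A_1$ and $S_\mu=S_\gamma-B_1$, and collects terms. Your explicit expansion of the two products simply writes out the bookkeeping that the paper leaves to the reader.
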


\begin{proof}
Write
\begin{equation*}
\Ical_k^\lambda=\Ical_k^\gamma+A_k,
\qquad
\Ical_k^\mu=\Ical_k^\gamma-B_k,
\end{equation*}
and use the same identities at level $1$.  Substitute them into the three
copies of \eqref{eq:injective-recursion} and collect terms.
\end{proof}

The only terms with a negative sign in \eqref{eq:compound-green} are the
composed copies of $C_k$.  This isolates the central inequality.

\begin{definition}[Composition-dominance residual]
For $k\ge2$, define
\begin{equation*}\tag{3.3}\label{eq:residual}
\begin{aligned}
\mathcal R_k(\mathbf x,z)
={}&
S_\gamma(z)C_k(\mathbf x)
+\Ical_k^\gamma(\mathbf x)C_1(z)
+A_1(z)A_k(\mathbf x)
+B_1(z)B_k(\mathbf x)\\
&-
\sum_{j=1}^k
C_k(x_1,\ldots,x_jz,\ldots,x_k).
\end{aligned}
\end{equation*}
\end{definition}

Then
\begin{equation*}\tag{3.4}\label{eq:CnextR}
C_{k+1}=\mathcal R_k.
\end{equation*}
Sections~\ref{sec:green-capacity}--\ref{sec:assembly} prove
\begin{equation*}
\mathcal R_k\ge0
\qquad(2\le k\le n-2)
\end{equation*}
whenever $C_k\ge0$.

\section{Sharpness and the first stable dimension}\label{sec:sharpness}

Before proving the higher-rank theorem, we record the obstruction to a
uniform one-variable criterion.

\begin{proposition}\label{prop:sharpness}
There exists a four-variable majorization chain
\begin{equation*}
\lambda\succ\gamma\succ\mu
\end{equation*}
such that
\begin{equation*}
P_4(t,1,1,1)\ge0
\qquad(t>0),
\end{equation*}
but
\begin{equation*}
P_4(x,y,1,1)
\end{equation*}
is not nonnegative on the positive quadrant.
\end{proposition}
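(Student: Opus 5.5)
The plan is to build the counterexample so that the one-variable hypothesis holds for a trivial reason. By Lemma~\ref{lem:specialization}, $P_4(t,1,1,1)=3!\,C_1(t)$, and
\begin{equation*}
C_1(t)=S_\lambda(t)+S_\mu(t)-2S_\gamma(t).
\end{equation*}
This vanishes identically exactly when the multiset union of the entries of $\lambda$ and $\mu$ equals the doubled multiset of the entries of $\gamma$. So I restrict to chains $\lambda\succ\gamma\succ\mu$ with four parts satisfying this \emph{balanced} condition. For such chains $P_4(t,1,1,1)\equiv0$, and the hypothesis of the proposition holds automatically. By \eqref{eq:C2hyp}, what remains is to exhibit a point $(x,y)$ with $C_2(x,y)<0$.

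Next I compute $C_2$ explicitly for small balanced chains, listing ordered pairs of distinct labels. The first candidate is
\begin{equation*}
\lambda=(2,2,0,0),\quad\gamma=(2,1,1,0),\quad\mu=(1,1,1,1).
\end{equation*}
It is balanced and satisfies the majorization chain, and it gives
\begin{equation*}
C_2(x,y)=2x^2y^2+2x^2+2y^2+8xy+2-4x^2y-4xy^2-4x-4y.
\end{equation*}
However, on the diagonal this equals $2(x-1)^4$. On the curve $xy=1$, with $s=x+1/x$, it equals $2(s-2)^2$. It also vanishes to third order at $(1,1)$. I therefore expect this degree-four example to be nonnegative, and I would discard it after checking the cubic and quartic terms at $(1,1)$.

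I would then move to the degree-six balanced family. Its first member is
\begin{equation*}
\lambda=(3,3,0,0),\quad\gamma=(3,2,1,0),\quad\mu=(2,2,1,1).
\end{equation*}
Balance holds because $\{3,3,0,0\}\cup\{2,2,1,1\}=2\cdot\{3,2,1,0\}$, and the majorizations are checked by partial sums. I would compute $C_2$ in the same way. Then I would look for a negative value in three places: first along boundary regimes such as $y\to0$ with $x$ fixed, where the $y^0$ coefficient is a one-variable polynomial in $x$ that can be examined directly; second along $x=y$; third along $xy=1$. If none of these sign-changes, I would use other balanced triples of degree six to eight, for example $\gamma$ with three distinct entries together with a spread-out $\lambda$, and repeat the check.

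Once a candidate is found, the proof consists of the balance identity $C_1\equiv0$, the explicit formula for $C_2$, and one rational point $(x_0,y_0)$ at which $C_2(x_0,y_0)<0$, evaluated exactly. The main obstacle is finding the example. The balanced condition forces strong cancellation: $C_2$ vanishes on the line $y=1$, and the first candidate shows that it can vanish to high order at $(1,1)$. A negative value is therefore most likely found away from $(1,1)$, near a boundary. If the balanced ansatz fails entirely, the fallback is to allow $C_1\ge0$ with $C_1\not\equiv0$. In that case I would arrange $C_1(t)=c\,t^a(t-1)^2$ with small $c>0$, and perturb a balanced triple whose $C_2$ has a strict negative value somewhere.
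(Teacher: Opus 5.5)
There is a genuine gap. The statement is existential, but your proposal never exhibits a chain, and its main search strategy provably cannot produce one. Use $\Ical_2^\nu(x,y)=S_\nu(x)S_\nu(y)-S_\nu(xy)$, which is the case $k=1$ of Lemma~\ref{lem:injective-recursion}, or equivalently \eqref{eq:compound-green} at $k=1$. For every chain this gives
\begin{equation*}
C_2(x,y)=S_\gamma(x)C_1(y)+S_\gamma(y)C_1(x)+A_1(x)A_1(y)+B_1(x)B_1(y)-C_1(xy).
\end{equation*}
Under your balanced condition $C_1\equiv0$ one has $A_1=B_1$. Hence $C_2(x,y)=2A_1(x)A_1(y)=\tfrac12D(x)D(y)$ with $D=S_\lambda-S_\mu$. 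Moreover $A_1\ge0$ by \eqref{eq:ABpositive}, or directly by Karamata for the convex function $a\mapsto t^a$.

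So $C_2\ge0$ for every balanced chain of every degree. Your two candidates give $2(x-1)^2(y-1)^2$ and $2(x-1)^2(x+1)(y-1)^2(y+1)$, and no search in degrees six to eight can change this. The fallback inherits the problem. It starts from a balanced triple whose $C_2$ is strictly negative somewhere, and no such triple exists. Also, because exponents and the coefficients of $C_1$ are integers, there is no continuous ``small $c$'' perturbation to exploit.

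The displayed formula shows where negativity must come from. $C_1$ must be nonnegative but not identically zero, and it must grow fast enough that the composed term $-C_1(xy)$ outweighs $S_\gamma(x)C_1(y)+S_\gamma(y)C_1(x)+A_1(x)A_1(y)+B_1(x)B_1(y)$. The paper's example does exactly this. Take $\lambda=(7,4,3,0)$, $\gamma=(6,5,2,1)$, $\mu=(6,4,2,2)$. Then $C_1(t)=(t-1)^2(t^5+t^4-t^3-t^2+1)\ge0$, which gives the one-variable hypothesis. At $x=y=2$ one has $S_\lambda(2)=153$, $S_\mu(2)=88$, $S_\gamma(2)=102$ and $C_1(4)=10809$. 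Therefore $C_2(2,2)=153^2+88^2-2\cdot102^2-10809=-464$, that is, $P_4(2,2,1,1)=-928<0$.
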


\begin{proof}
Take
\begin{equation*}
\lambda=(7,4,3,0),
\qquad
\gamma=(6,5,2,1),
\qquad
\mu=(6,4,2,2).
\end{equation*}
They have common degree $14$ and satisfy
\begin{equation*}
\lambda\succ\gamma\succ\mu.
\end{equation*}
For
\begin{equation*}
P_4=J_\lambda^{(4)}+J_\mu^{(4)}-2J_\gamma^{(4)},
\end{equation*}
direct simplification gives
\begin{equation*}\tag{4.1}\label{eq:onevar-positive}
\frac{P_4(t,1,1,1)}6
=
(t-1)^2
\left(t^5+t^4-t^3-t^2+1\right).
\end{equation*}
For $0<t\le1$,
\begin{equation*}
t^5+t^4-t^3-t^2+1
=
(1-t^2)(1-t^3)+t^4>0,
\end{equation*}
while for $t\ge1$,
\begin{equation*}
t^5+t^4-t^3-t^2+1
=
1+t^2(t+1)^2(t-1)>0.
\end{equation*}
Hence \eqref{eq:onevar-positive} is nonnegative for all $t>0$.

On the double-double section,
\begin{equation*}\tag{4.2}\label{eq:twovar-negative}
\frac{P_4(t,t,1,1)}4
=
-t^3(t-1)^2
\left(t^6-t^4-2t^3-t^2+1\right).
\end{equation*}
At $t=2$,
\begin{equation*}
P_4(2,2,1,1)=-928<0.
\end{equation*}
\end{proof}

Thus the one-dimensional three-variable theorem is exceptional.  The
universal higher-rank mechanism begins at
\begin{equation*}
C_2\ge0.
\end{equation*}

\subsection{Four variables: the first stable case}
\label{sec:n4-first-stable}

Dimension four is the first case in which the stable order-raising mechanism
appears.  Here there is only one nontrivial raise:
\begin{equation*}
\boxed{
C_2(x,y)\ge0
\quad\Longrightarrow\quad
C_3(x,y,z)\ge0.
}
\end{equation*}
Because $n-2=2$, the distinction between the fixed two-variable test and the
normalized full collision wall disappears in this dimension.  Indeed,
\begin{equation*}
P_4(x,y,1,1)=2!\,C_2(x,y),
\end{equation*}
while every point of the full four-variable collision wall can, after a
permutation and a homogeneous rescaling, be written as
\begin{equation*}
(x,y,1,1).
\end{equation*}
Thus in four variables the hypothesis $C_2\ge0$ is simultaneously the
fixed two-variable criterion and the complete normalized collision-wall
criterion.

Equation~\eqref{eq:compound-green} specializes to
\begin{equation*}\tag{4.3}\label{eq:n4green}
\begin{aligned}
C_3(x,y,z)
={}&
S_\gamma(z)C_2(x,y)
+\Ical_2^\gamma(x,y)C_1(z)\\
&+A_1(z)A_2(x,y)
+B_1(z)B_2(x,y)\\
&-C_2(xz,y)-C_2(yz,x).
\end{aligned}
\end{equation*}
By Lemma~\ref{lem:specialization}, $C_2\ge0$ implies $C_1\ge0$.  Hence the
problem is already in the same form as the general residual: positive lower
order terms and Muirhead products must compensate the two composed copies of
$C_2$.  The arbitrary-dimensional proof below proves precisely this
compensation, with $k=2$ as its first instance.

There is also an exact combinatorial reason that dimension four is the first
stable case.  In the adjacent-rank decomposition of Section~\ref{sec:fixed-union},
let $S$ have size $k$ and $T$ size $k+1$, and write
\begin{equation*}
C=S\cap T,
\qquad
U=S\triangle T,
\qquad
|C|=k-r,
\qquad
|U|=2r+1.
\end{equation*}
For $n=4$ and $k=2$ one has
\begin{equation*}
|C|+|U|=(2-r)+(2r+1)=r+3\le4,
\end{equation*}
so necessarily
\begin{equation*}\tag{4.4}\label{eq:n4-fixed-union-sizes}
\boxed{r\le1,
\qquad
|U|\in\{1,3\}.}
\end{equation*}
The case $|U|=1$ is the elementary adjacent insertion.  The case $|U|=3$
is already the full complementary three-position species.  Thus the
four-variable problem sees the entire local exterior geometry of the general
proof, but no longer ownership word can yet occur.

The same observation appears in the marked-incidence contraction.  The
physical projection uses only the factorial moments
\begin{equation*}
1,\qquad m,\qquad m(m-1),
\end{equation*}
so only zero-, one-, and two-mark layers occur.  At $n=4$ the three-position
fixed-union block already realizes the maximal local determinant
\begin{equation*}
\det(M_0,M_1,M_2).
\end{equation*}
Increasing $n$ creates more spectator labels and longer fixed-union words,
but it does not create a fourth local incidence layer or a higher exterior
rank.  In this precise sense, $n=4$ is the first stable dimension.

\subsection{The four-variable proofs and the general mechanism}
\label{sec:n4-proof-comparison}

It is useful to distinguish two four-variable proof architectures.  One of
them is literally the $n=4$, $k=2$ shadow of the proof developed in the
present paper; the other is a genuinely dimension-specific alternative.

The first is the permanently labeled Green proof.  Its local objects match
the general proof as follows:
\begin{center}
\begin{tabular}{>{\raggedright\arraybackslash}p{0.39\textwidth}>{\raggedright\arraybackslash}p{0.49\textwidth}}
\toprule
Four-variable Green language & Arbitrary-dimensional language\\
\midrule
permanent cover-cell labels & permanent cover-cell labels\\
Green second compound & Green second-compound reserve\\
same-role collective packet & same-role source packet\\
role-switch Green rectangle & synchronized role-switch payment\\
Complete Internal-Pair occurrence lemma & general-$k$ spectator deletion/reinsertion and two-mark internal-pair theorem\\
literal terminal fan & exact one-mark terminal fan\\
endpoint capacity / Cut--Stokes & synchronized endpoint-capacity theorem\\
terminal source selector & terminal Pascal selector\\
adjacent projective ordering & projective ordering of every physical row\\
projective back-propagation & terminal sign propagated to the full physical row\\
\bottomrule
\end{tabular}
\end{center}

At $n=4$ there are essentially no higher-rank spectator complications left
after the two changed labels are isolated.  In the general proof, the new
work is to prove that deleting and reinserting arbitrarily many fixed
spectators changes neither the local zero/one/two-mark occurrence table nor
its multiplicities.  Likewise, the general fixed-union theorem proves that
arbitrarily long ownership words straighten to exactly the same rank-three
terminal species already visible when $|U|=3$ in four variables.  The
post-turn injection proves the analogous stability for lower-chain terminal
debts.  Thus one may summarize the structural relation as
\begin{equation*}\tag{4.5}\label{eq:n4-to-general-summary}
\boxed{
\begin{aligned}
\text{general proof}
={}&\ \text{four-variable permanent-pair Green mechanism}\\
&+\ \text{fixed-union word stability}\\
&+\ \text{spectator-occurrence stability}\\
&+\ \text{post-turn lower-chain stability}.
\end{aligned}}
\end{equation*}
This is a structural statement about the proof, not merely the formal fact
that the final theorem contains $n=4$ as a special case.

For comparison, a second proof is available specifically in four variables.
It retains a collision boundary and expands
\begin{equation*}
C_3(c+u,c,z)=\sum_m P_m(c,z)u^m,
\end{equation*}
then establish the terminal coefficient by a classification of largest-
exponent multiplicities.  In the common-top case the two genuinely difficult
patterns are
\begin{equation*}
(3,2,1)
\qquad\text{and}\qquad
(3,2,2).
\end{equation*}
The first is resolved by a weighted three-variable theorem at weight
$\theta=1/3$; the second is resolved by a quadratic collision jet together
with a pair-product/single-direction reduction.  This route is economical in dimension four, but its terminal classification
is dimension-specific and is not the mechanism iterated in
Sections~\ref{sec:green-capacity}--\ref{sec:assembly}.  The weighted
three-variable theorem just mentioned enters only this alternative $n=4$
proof and is not a dependency of the arbitrary-dimensional argument.

Consequently the logical relationship is
\begin{equation*}
\boxed{
\begin{aligned}
\text{Green }n=4\text{ proof}
&=\text{ the }(n,k)=(4,2)\text{ specialization},\\
\text{coarse-terminal }n=4\text{ proof}
&=\text{ a distinct dimension-specific route}.
\end{aligned}}
\end{equation*}
The alternative viewpoint is useful because it supplies an independent
terminal-sign argument in the first stable dimension, while the Green
formulation isolates the mechanism that persists for arbitrary $n$.

\section{Unit balancing moves and Green capacity}\label{sec:green-capacity}

\begin{definition}[Unit balancing cover]
Let $\alpha$ and $\beta$ be nonincreasing integer exponent vectors of the
same total degree.  We say that $\alpha$ covers $\beta$ in the dominance
order if $\alpha\succ\beta$ and there is no integer exponent vector strictly
between them in dominance order.  Such a cover is obtained by choosing a
donor exponent $a$ and a receiver exponent $b$ with $a\ge b+2$ and replacing
\begin{equation*}
(a,b)
\quad\text{by}\quad
(a-1,b+1),
\end{equation*}
followed, when necessary, by reordering the exponent vector into
nonincreasing order.  We call this elementary step a \emph{unit balancing
cover}.
\end{definition}

\begin{remark}[Cover versus balancing move]
The phrase ``unit balancing cover'' includes the order-theoretic requirement
that there be no intermediate dominant integer vector.  A formal transfer
$(a,b)\mapsto(a-1,b+1)$ with $a\ge b+2$ need not automatically be a cover
after all coordinates are re-ordered.  The exact cover criterion needed in
the terminal normalization is proved inside
Lemma~\ref{lem:post-turn-injection}.  Until that point, every chosen edge in a
saturated chain is assumed to be an actual cover.
\end{remark}

\begin{definition}[Saturated chain]
A finite dominance chain
\begin{equation*}
\alpha^{(0)}\succ\alpha^{(1)}\succ\cdots\succ\alpha^{(N)}
\end{equation*}
is called \emph{saturated} if every adjacent pair
$\alpha^{(r-1)}\succ\alpha^{(r)}$ is a unit balancing cover.
\end{definition}

\begin{definition}[Cover block, cover cell, numerical height, permanent label]
For the unit cover
\begin{equation*}
(a,b)\longmapsto(a-1,b+1),
\qquad a\ge b+2,
\end{equation*}
its one-variable difference factors as
\begin{equation*}\tag{5.1}\label{eq:cover-factor}
\begin{aligned}
t^a+t^b-t^{a-1}-t^{b+1}
&=(t-1)(t^{a-1}-t^b)\\
&=(t-1)^2\sum_{h=b}^{a-2}t^h.
\end{aligned}
\end{equation*}
The finite set
\begin{equation*}
\{b,b+1,\ldots,a-2\}
\end{equation*}
is the \emph{cover block}.  Each occurrence $(e,h)$, where $e$ is the
identity of the cover and $h$ is one of these integers, is a \emph{cover
cell}; $h$ is its \emph{numerical height}.  The pair $(e,h)$ is its
\emph{permanent label}.  Distinct permanent labels are never identified,
even if their numerical heights are equal.
\end{definition}

Choose saturated chains
\begin{equation*}
\lambda=\lambda^{(0)}
\succ\lambda^{(1)}
\succ\cdots\succ\lambda^{(p)}=\gamma
\end{equation*}
and
\begin{equation*}
\gamma=\mu^{(0)}
\succ\mu^{(1)}
\succ\cdots\succ\mu^{(q)}=\mu.
\end{equation*}
All cover cells in both chains retain their permanent labels throughout the
proof.

\begin{definition}[Green second-compound reserve]
Let $h_1\le\cdots\le h_M$ be the numerical heights of a finite collection
of permanently labeled cover cells and put
\begin{equation*}
F(t)=\sum_{a=1}^{M}t^{h_a}.
\end{equation*}
For variables $X\ge1\ge Y>0$, the polynomial
\begin{equation*}
F(X)F(Y)-MF(XY)
\end{equation*}
is called the \emph{Green second-compound reserve} of this labeled cell
collection.  The word ``reserve'' only means that this explicitly
nonnegative polynomial will later be allocated to compensate signed local
terms; it is not a new algebraic operation.
\end{definition}

\begin{remark}[Why the Green form is useful]
The reserve is a bookkeeping form of a discrete second compound.  Its
importance is not merely that it is nonnegative.  Formula~\eqref{eq:green-compound}
breaks it into permanently labeled unordered pairs.  Later, negative local
sources are paid pair-by-pair from this inventory.  Keeping the permanent
labels makes disjoint allocation of Green pairs mathematically precise.
The normalized variables $X\ge1\ge Y>0$ arise from the ordered local chamber;
all sign arguments below are coefficientwise in these normalized variables.
\end{remark}

Suppose one labeled collection has ordered heights
\begin{equation*}
h_1\le h_2\le\cdots\le h_M
\end{equation*}
and put
\begin{equation*}
F(t)=\sum_{a=1}^M t^{h_a}.
\end{equation*}

\begin{lemma}[Green second-compound identity]\label{lem:green-compound}
For $X\ge1\ge Y>0$,
\begin{equation*}\tag{5.2}\label{eq:green-compound}
F(X)F(Y)-MF(XY)
=
\sum_{1\le i<j\le M}
(X^{h_j}-X^{h_i})(Y^{h_i}-Y^{h_j}).
\end{equation*}
Every summand on the right is nonnegative.
\end{lemma}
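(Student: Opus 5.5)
The identity is a direct algebraic expansion, and the sign claim is a termwise check. We prove the expansion first, as a polynomial identity valid for all $X,Y$, and use the chamber $X\ge1\ge Y>0$ only for the signs.

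\textbf{Expansion.} Write
\begin{equation*}
F(X)F(Y)=\sum_{i,j}X^{h_i}Y^{h_j},
\qquad
MF(XY)=\sum_{i,j}X^{h_i}Y^{h_i}.
\end{equation*}
The second sum is simply $\sum_i X^{h_i}Y^{h_i}$ counted $M$ times, once for each $j$. Subtracting, the diagonal terms $i=j$ cancel, and
\begin{equation*}
F(X)F(Y)-MF(XY)=\sum_{i\ne j}\bigl(X^{h_i}Y^{h_j}-X^{h_i}Y^{h_i}\bigr).
\end{equation*}
Now group the ordered pairs $(i,j)$ and $(j,i)$ into the unordered pair $\{i,j\}$ with $i<j$. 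Their combined contribution is
\begin{equation*}
X^{h_i}Y^{h_j}+X^{h_j}Y^{h_i}-X^{h_i}Y^{h_i}-X^{h_j}Y^{h_j}.
\end{equation*}
This factors as $(X^{h_j}-X^{h_i})(Y^{h_i}-Y^{h_j})$, which one checks by expanding the product. Summing over $i<j$ gives \eqref{eq:green-compound}.

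Because the cells are permanently labeled, equal heights with distinct labels are still counted as distinct indices. Their summand simply vanishes, so no identification issue arises.

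\textbf{Signs.} Take $i<j$, so $h_i\le h_j$. Since $X\ge1$, the map $h\mapsto X^h$ is nondecreasing, hence $X^{h_j}-X^{h_i}\ge0$. Since $0<Y\le1$, the map $h\mapsto Y^h$ is nonincreasing, hence $Y^{h_i}-Y^{h_j}\ge0$. Each summand is therefore a product of two nonnegative factors.

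The only step needing any care is the regrouping of ordered off-diagonal pairs into unordered pairs. It is routine once the double sums are written out explicitly.
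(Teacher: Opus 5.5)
Your proposal is correct and is essentially the paper's own proof: both establish the identity by direct expansion, with diagonal terms carrying total coefficient $-(M-1)$ and off-diagonal terms matching. The paper expands the right-hand side while you expand the left-hand side and regroup ordered pairs into unordered ones, and your sign check via $h_i\le h_j$, $X\ge1$, $Y\le1$ is exactly the paper's.
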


\begin{proof}
For each $i<j$,
\begin{equation*}
\begin{aligned}
&(X^{h_j}-X^{h_i})(Y^{h_i}-Y^{h_j})\\
&=
X^{h_j}Y^{h_i}
+X^{h_i}Y^{h_j}
-X^{h_i}Y^{h_i}
-X^{h_j}Y^{h_j}.
\end{aligned}
\end{equation*}
After summation, the off-diagonal terms give the off-diagonal part of
$F(X)F(Y)$ and each diagonal term occurs with coefficient $-(M-1)$.
This is \eqref{eq:green-compound}.  The sign follows from
$h_i\le h_j$, $X\ge1$, and $Y\le1$.
\end{proof}

The identity may also be written as a positive discrete Green form.  If
\begin{equation*}
A_r=X^{h_{r+1}}-X^{h_r},
\qquad
B_s=Y^{h_s}-Y^{h_{s+1}},
\end{equation*}
then a double summation gives
\begin{equation*}\tag{5.3}\label{eq:green-kernel}
F(X)F(Y)-MF(XY)
=
\sum_{r,s=1}^{M-1}
G_{rs}A_rB_s,
\end{equation*}
where
\begin{equation*}
G_{rs}
=
\min(r,s)\bigl(M-\max(r,s)\bigr)\ge0.
\end{equation*}

The next elementary inequality explains how this Green reserve pays the
local projective debt created by a role change.

\begin{lemma}[Green domination of the divided-difference source]
\label{lem:green-dominates}
Let $\ell<h$ and put $d=h-\ell$.  Then
\begin{equation*}\tag{5.4}\label{eq:green-pair}
\frac{(X^h-X^\ell)(Y^\ell-Y^h)}
{(X-1)(1-Y)}
=
(XY)^\ell A_d(X)A_d(Y),
\end{equation*}
where
\begin{equation*}
A_d(T)=1+T+\cdots+T^{d-1},
\end{equation*}
while
\begin{equation*}\tag{5.5}\label{eq:pair-source}
\frac{X^hY^\ell-X^\ell Y^h}{X-Y}
=
(XY)^\ell
\sum_{q=0}^{d-1}X^{d-1-q}Y^q.
\end{equation*}
Consequently,
\begin{equation*}\tag{5.6}\label{eq:green-coeff}
\frac{(X^h-X^\ell)(Y^\ell-Y^h)}
{(X-1)(1-Y)}
\ge_{\mathrm{coeff}}
\frac{X^hY^\ell-X^\ell Y^h}{X-Y}.
\end{equation*}
\end{lemma}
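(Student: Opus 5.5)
The plan is to prove the two identities \eqref{eq:green-pair} and \eqref{eq:pair-source} by direct factorization, and then compare the resulting polynomials monomial by monomial. For \eqref{eq:green-pair}, factor $X^\ell$ from $X^h-X^\ell$ to get $X^\ell(X^d-1)=X^\ell(X-1)A_d(X)$. Similarly, $Y^\ell-Y^h=Y^\ell(1-Y^d)=Y^\ell(1-Y)A_d(Y)$. Multiplying these and dividing by $(X-1)(1-Y)$ gives $(XY)^\ell A_d(X)A_d(Y)$. This is a polynomial identity, so it holds wherever the denominators are nonzero and extends by continuity.

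For \eqref{eq:pair-source}, I will write
\begin{equation*}
X^hY^\ell-X^\ell Y^h=(XY)^\ell\,(X^d-Y^d).
\end{equation*}
Then I apply the standard factorization
\begin{equation*}
X^d-Y^d=(X-Y)\sum_{q=0}^{d-1}X^{d-1-q}Y^q.
\end{equation*}
Again this is an identity of polynomials after cancelling $X-Y$.

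For \eqref{eq:green-coeff}, both sides share the factor $(XY)^\ell$, which has coefficient $1$ on a single monomial. Multiplying by it shifts exponents without changing coefficients, so it suffices to compare
\begin{equation*}
A_d(X)A_d(Y)=\sum_{0\le i,j\le d-1}X^iY^j
\end{equation*}
with $\sum_{q=0}^{d-1}X^{d-1-q}Y^q$. The first is the sum of all monomials $X^iY^j$ with $0\le i,j\le d-1$, each with coefficient $1$. The second is the sub-sum over the antidiagonal $i+j=d-1$, again with coefficients $1$.

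The difference is therefore the sum over the off-antidiagonal index pairs, which has nonnegative coefficients. This is exactly $\ge_{\mathrm{coeff}}$ in the monomial basis $X^\alpha Y^\beta$.

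There is no genuine obstacle here. The only point needing care is to read the two divided expressions as polynomials, via the factorizations above, rather than as rational functions, so that the coefficientwise order is meaningful. It is also worth noting that the inequality holds coefficientwise, and hence pointwise for all $X,Y>0$, not only in the chamber $X\ge1\ge Y$.
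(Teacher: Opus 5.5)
Your proposal is correct and is essentially the paper's own argument. Both identities follow from the geometric-series factorizations $X^d-1=(X-1)A_d(X)$ and $X^d-Y^d=(X-Y)\sum_{q=0}^{d-1}X^{d-1-q}Y^q$, and the coefficient inequality holds because the pair source is exactly one antidiagonal of the full $d\times d$ rectangle $A_d(X)A_d(Y)$, shifted by the common factor $(XY)^\ell$.
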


\begin{proof}
The first two identities are geometric-series expansions.  The product
$A_d(X)A_d(Y)$ is the full $d\times d$ coefficient rectangle
\begin{equation*}
\sum_{0\le i,j\le d-1}X^iY^j,
\end{equation*}
whereas the polynomial in \eqref{eq:pair-source} is exactly one
anti-diagonal of that rectangle.  Their difference therefore has
nonnegative coefficients.
\end{proof}

\begin{definition}[Shared label, donor role, receiver role, role switch]
Two consecutive covers have a \emph{shared exponent label} if the same
permanent exponent label participates in both covers.  Relative to that
shared label, a cover has \emph{donor role} if the shared exponent is
lowered by one and \emph{receiver role} if it is raised by one.  If the role
changes from donor to receiver or from receiver to donor between two
consecutive covers, the pair is called a \emph{role switch}.  A maximal
consecutive sequence in which a shared label keeps the same role is called a
\emph{same-role segment}.
\end{definition}

After separated covers are commuted into the canonical order used below, a
role switch determines a permanently labeled pair of cover cells.  The
local divided-difference term associated with that pair has the form
\eqref{eq:pair-source}.  Lemma~\ref{lem:green-dominates} supplies the
corresponding Green payment.  Permanent labels are retained so that the
same Green pair cannot be allocated twice.

\begin{lemma}[Disjoint Green allocation]\label{lem:no-double}
Distinct role-switch intervals use disjoint sets of labeled unordered
cover-cell pairs.  Hence the sum of all attached divided-difference sources
is coefficientwise dominated by the Green second-compound reserve.
\end{lemma}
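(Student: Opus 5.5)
The plan has two parts: first an injective assignment from role-switch intervals to sets of labeled unordered cover-cell pairs, then a summation of Lemma~\ref{lem:green-dominates} over that assignment. Fix the two saturated chains with their permanent cover-cell labels, and list the role switches. By definition, a role switch occurs between two consecutive covers $e$ and $e'$ that share a permanent exponent label $\ell$; in one cover $\ell$ is donor and in the other it is receiver. To the interval between them I attach the unordered pairs $\{(e,h),(e',h')\}$ whose cells come one from the cover block of $e$ and one from the cover block of $e'$, restricted to the heights that actually appear in the divided-difference source~\eqref{eq:pair-source}. I would first check, from the factorization~\eqref{eq:cover-factor}, that the source attached to this switch is a sum over exactly such cross pairs, each term of the form $(X^hY^{h'}-X^{h'}Y^h)/(X-Y)$. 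This is a direct comparison of the heights $b,\dots,a-2$ of the two blocks.

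The key disjointness claim is that a labeled unordered pair determines its interval. An attached pair $\{(e,h),(e',h')\}$ records the two cover identities $e$ and $e'$, because the first coordinate of a permanent label is the identity of the cover. In the canonical order, after separated covers have been commuted past each other, two covers $e$ and $e'$ are consecutive in at most one position. Moreover, the shared label $\ell$ together with the donor/receiver orientation is determined by $e$ and $e'$, since each unit cover moves exactly two exponent labels. So two distinct role-switch intervals that used a common pair would have the same pair of covers $\{e,e'\}$, and hence would be the same interval. One subtle point needs attention: two covers could in principle share both of their exponent labels, for instance a donor/receiver swap. In that case I would show that the two shared labels give a single switch interval rather than two, or I would assign the pairs according to the orientation so that they are split disjointly.

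Given disjointness, I would sum Lemma~\ref{lem:green-dominates} over all intervals. Each attached source is coefficientwise at most the normalized Green pair $(X^{h_j}-X^{h_i})(Y^{h_i}-Y^{h_j})/((X-1)(1-Y))$ for the same labeled pair. By Lemma~\ref{lem:green-compound}, the full reserve $F(X)F(Y)-MF(XY)$, divided by $(X-1)(1-Y)$, is the sum of these normalized terms over all labeled pairs $i<j$. Each of those terms is coefficientwise nonnegative by~\eqref{eq:green-pair}. Dropping the unused pairs therefore preserves the coefficientwise inequality, which gives the domination claimed in the lemma.

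I expect the main obstacle to be the well-definedness of the canonical commutation. Disjointness only holds if the canonical order is fixed once and for all, so that a given pair of covers is adjacent at most once. I would handle this by ordering the covers by permanent label and commuting only covers with disjoint supports. Such commutations do not change the multiset of cover cells, and they create no new adjacencies between covers that share a label.
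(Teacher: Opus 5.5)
Your proposal is correct and follows the paper's own proof. Permanent labels make every attached cross pair of cells record the two covers of its switch, so no labeled Green pair is allocated twice; Lemma~\ref{lem:green-dominates} is then applied pair by pair, and the unused Green pairs, each coefficientwise nonnegative by \eqref{eq:green-pair}, are simply discarded. Your worry about canonical commutation is unnecessary, because in any fixed order each cover occupies a single position, so a pair of covers is adjacent at most once. Your side claim that commuting separated covers creates no new label-sharing adjacencies is false in general, since bringing interacting covers together is the purpose of the canonical order, but nothing in your argument depends on it.
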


\begin{proof}
The expansion \eqref{eq:green-compound} is indexed by labeled unordered
pairs $\{i,j\}$.  A role switch is determined by the first and last labeled
cell of the corresponding rectangle, so a fixed labeled pair belongs to at
most one such rectangle.  Therefore no term of
\eqref{eq:green-compound} is allocated twice.  Apply
Lemma~\ref{lem:green-dominates} pair by pair.
\end{proof}

\section{The two-chain complement identity}\label{sec:two-chain}

The next argument is purely finite and combinatorial.  Its only input from
the coefficient geometry is that the local edge weights are determinants
of an ordered family of positive two-vectors.

Let
\begin{equation*}
s_q=
\binom{u_q}{v_q}\in\R_{>0}^2,
\qquad
K(p,q)=\det(s_p,s_q)>0
\qquad(p<q).
\end{equation*}

\begin{lemma}[Affine form of the determinant kernel]\label{lem:affine-kernel}
Set
\begin{equation*}
\rho_q=u_q>0,
\qquad
\tau_q=\frac{v_q}{u_q}.
\end{equation*}
Then
\begin{equation*}\tag{6.1}\label{eq:affine-kernel}
K(p,q)
=
\rho_p\rho_q(\tau_q-\tau_p),
\end{equation*}
and the assumption $K(p,q)>0$ for $p<q$ implies
\begin{equation*}
\tau_0<\tau_1<\cdots<\tau_N.
\end{equation*}
\end{lemma}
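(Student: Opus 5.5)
The identity \eqref{eq:affine-kernel} comes from expanding the $2\times2$ determinant directly. Since $u_q>0$ for every $q$, we may write $v_q=\tau_q u_q=\tau_q\rho_q$, so each vector becomes
\begin{equation*}
s_q=\rho_q\binom{1}{\tau_q}.
\end{equation*}
Bilinearity of the determinant in its two columns then gives
\begin{equation*}
K(p,q)=\rho_p\rho_q\det\begin{pmatrix}1&1\\ \tau_p&\tau_q\end{pmatrix}
=\rho_p\rho_q(\tau_q-\tau_p).
\end{equation*}
Equivalently, $u_pv_q-v_pu_q=u_pu_q\bigl(v_q/u_q-v_p/u_p\bigr)$. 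This proves \eqref{eq:affine-kernel}.

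For the monotonicity, fix $p<q$. By hypothesis $K(p,q)>0$, and $\rho_p\rho_q>0$. Dividing \eqref{eq:affine-kernel} by $\rho_p\rho_q$ therefore gives $\tau_q-\tau_p>0$. In fact only the adjacent pairs $(q-1,q)$ are needed for $1\le q\le N$, and chaining these yields $\tau_0<\tau_1<\cdots<\tau_N$.

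Nothing here is difficult. The only point to watch is that we divide by $u_q$, which is legitimate because $s_q\in\R_{>0}^2$ makes $u_q$ strictly positive. This positivity is also what ensures that $\rho_q$ carries no sign, so the sign of $K(p,q)$ is exactly the sign of the slope difference $\tau_q-\tau_p$. That is the form I expect the subsequent two-chain complement argument to use.
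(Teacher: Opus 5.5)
Your proof is correct and matches the paper's argument: both write $s_q=\rho_q\binom{1}{\tau_q}$, pull out $\rho_p\rho_q$ by bilinearity to obtain $\rho_p\rho_q(\tau_q-\tau_p)$, and read off the strict ordering of the $\tau_q$ from the positivity of the $\rho_q$. Your remark that only the adjacent pairs $(q-1,q)$ are needed is a harmless sharpening.
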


\begin{proof}
Since
\begin{equation*}
s_q=\rho_q\binom{1}{\tau_q},
\end{equation*}
we have
\begin{equation*}
\det(s_p,s_q)
=
\rho_p\rho_q
\det\!\begin{pmatrix}1&1\\ \tau_p&\tau_q\end{pmatrix}
=
\rho_p\rho_q(\tau_q-\tau_p).
\end{equation*}
The ordering of the $\tau_q$ follows because all $\rho_q$ are positive.
\end{proof}

Fix ordered indices
\begin{equation*}
i<j<k_1<\cdots<k_m<\ell.
\end{equation*}
An assignment is a word
\begin{equation*}
\omega=(\omega_1,\ldots,\omega_m)\in\{L,R\}^m.
\end{equation*}
The letter $L$ places $k_q$ in the chain beginning at $i$, while $R$
places it in the chain beginning at $j$; both chains end at $\ell$.
If the $L$-chain is
\begin{equation*}
i=x_0<x_1<\cdots<x_p<x_{p+1}=\ell
\end{equation*}
and the $R$-chain is
\begin{equation*}
j=y_0<y_1<\cdots<y_{m-p}<y_{m-p+1}=\ell,
\end{equation*}
define its determinant weight by
\begin{equation*}\tag{6.2}\label{eq:det-chain-weight}
w_K(\omega)
=
\prod_{r=0}^{p}K(x_r,x_{r+1})
\prod_{s=0}^{m-p}K(y_s,y_{s+1}).
\end{equation*}
Let
\begin{equation*}
d(\omega)=N_L(\omega)-N_R(\omega)
\end{equation*}
be the difference between the number of $L$- and $R$-assigned
intermediate sites, and put
\begin{equation*}\tag{6.3}\label{eq:det-imbalance}
\Delta_m^K(i,j)
=
\sum_{\omega\in\{L,R\}^m}d(\omega)w_K(\omega).
\end{equation*}

Replace every determinant factor $K(p,q)$ in \eqref{eq:det-chain-weight}
by the positive gap $\tau_q-\tau_p$, and call the resulting weight
$w_\tau(\omega)$.  Equation \eqref{eq:affine-kernel} gives
\begin{equation*}\tag{6.4}\label{eq:common-ray-factor}
w_K(\omega)
=
G\,w_\tau(\omega),
\qquad
G
=
\rho_i\rho_j\rho_\ell^2
\prod_{q=1}^m\rho_{k_q}^2>0.
\end{equation*}
The important point is that $G$ is independent of the assignment
$\omega$: every intermediate site occurs in exactly two chain edges, each
starting site in one edge, and the common terminal site in one edge of
each chain.

For $m\ge1$, let $\eta\in\{L,R\}^{m-1}$ assign the residual sites
$k_2,\ldots,k_m$.  Let $\widehat w_\tau(\eta)$ be the same gap-product
weight, but with both residual chains starting at $\tau_{k_1}$ and both
ending at $\tau_\ell$.  For $m=1$ this means
\begin{equation*}
\widehat w_\tau(\varnothing)
=(\tau_\ell-\tau_{k_1})^2.
\end{equation*}
Define
\begin{equation*}
\mathcal P_m
=
\sum_{\eta\in\{L,R\}^{m-1}}\widehat w_\tau(\eta)>0.
\end{equation*}

\begin{theorem}[Two-chain complement identity]\label{thm:two-chain}
For $m\ge1$,
\begin{equation*}\tag{6.5}\label{eq:two-chain-factorization}
\Delta_m^K(i,j)
=
G(\tau_j-\tau_i)\mathcal P_m>0.
\end{equation*}
For $m=0$, one has $\Delta_0^K(i,j)=0$.
\end{theorem}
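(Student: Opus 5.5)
The plan is to factor out the common ray factor and then treat the remaining gap-product sum as a polynomial in the two starting heights. By \eqref{eq:common-ray-factor}, $\Delta_m^K(i,j)=G\sum_\omega d(\omega)w_\tau(\omega)$, since $G$ does not depend on $\omega$. It therefore suffices to prove
\begin{equation*}
F(a,b):=\sum_{\omega\in\{L,R\}^m}d(\omega)w_\tau(\omega)=(b-a)\,\mathcal P_m ,
\end{equation*}
where $a=\tau_i$ and $b=\tau_j$ are regarded as \emph{formal variables}. The intermediate heights $\tau_{k_1}<\cdots<\tau_{k_m}$ and the terminal height $\tau_\ell$ are held fixed. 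Positivity then follows from Lemma~\ref{lem:affine-kernel}, which gives $\tau_j>\tau_i$, together with $\mathcal P_m>0$ and $G>0$.

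\textbf{Step 1 (shape of $F$).} In each weight $w_\tau(\omega)$ the variable $a$ occurs in exactly one factor, the first gap of the $L$-chain. Likewise $b$ occurs only in the first gap of the $R$-chain. Hence $F$ has degree at most one in each variable, so $F=c_0+c_1a+c_2b+c_3ab$.

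\textbf{Step 2 (antisymmetry).} Interchanging the letters $L\leftrightarrow R$ in every word is a bijection of $\{L,R\}^m$. It exchanges the roles of the two starting points and negates $d(\omega)$. This gives $F(a,b)=-F(b,a)$ as a polynomial identity. Comparing coefficients forces $c_0=c_3=0$ and $c_2=-c_1$. Therefore $F=(b-a)\kappa$, where $\kappa$ is independent of $a$ and $b$.

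For $m=0$ there is only the empty word, with $d=0$, so $F\equiv0$. This gives the second claim.

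\textbf{Step 3 (identifying $\kappa$).} For $m\ge1$ I would evaluate at the specialization $b=\tau_{k_1}$, which is legitimate because the identity is polynomial.
\begin{itemize}
\item Every word with $\omega_1=R$ contains the gap $\tau_{k_1}-b=0$, so it contributes nothing.
\item A word with $\omega_1=L$ has the form $\omega=(L,\eta)$. Its weight factors as $(\tau_{k_1}-a)\,\widehat w_\tau(\eta)$, since now both residual chains start at $\tau_{k_1}$. Its imbalance is $d(\omega)=1+d(\eta)$.
\end{itemize}
Hence
\begin{equation*}
(\tau_{k_1}-a)\,\kappa=F(a,\tau_{k_1})=(\tau_{k_1}-a)\sum_{\eta}\bigl(1+d(\eta)\bigr)\widehat w_\tau(\eta).
\end{equation*}
The residual weights $\widehat w_\tau$ have identical starting and ending points for both chains, so they are invariant under the swap $L\leftrightarrow R$ on $\eta$. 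That swap negates $d(\eta)$, so $\sum_\eta d(\eta)\widehat w_\tau(\eta)=0$. Therefore $\kappa=\mathcal P_m$, and multiplying by $G$ gives \eqref{eq:two-chain-factorization}.

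\textbf{Main obstacle.} I expect the delicate point to be justifying the passage to formal variables in Step 3, since the specialization $b=\tau_{k_1}$ violates the strict ordering. The weight formula is a polynomial expression in $a$ and $b$, so the identity $F=(b-a)\kappa$ holds identically and may be evaluated anywhere. Some care is also needed with the bookkeeping of chains that contain no intermediate sites, where an edge runs directly from the start to $\ell$. This case still contributes exactly one factor in $a$ (respectively $b$), so Step 1 remains valid.
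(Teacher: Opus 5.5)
Your proof is correct and follows essentially the paper's argument. The paper also extracts the factor $\tau_j-\tau_i$ from the $L\leftrightarrow R$ complement symmetry, computing each complementary pair's contribution $(\tau_j-\tau_i)d(\omega)c_\omega(b_\omega-a_\omega)$ explicitly where you combine bilinearity with antisymmetry; it then identifies the constant by specializing $\tau_j=\tau_{k_1}$ as a polynomial identity in the starting heights and using the residual complement symmetry to kill $\sum_\eta d(\eta)\widehat w_\tau(\eta)$, exactly as you do.
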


\begin{proof}
By \eqref{eq:common-ray-factor}, it is enough to prove the corresponding
identity for the gap weights $w_\tau$.  Write
\begin{equation*}
\Delta_m^\tau(i,j)
=
\sum_\omega d(\omega)w_\tau(\omega).
\end{equation*}
For an assignment $\omega$, let $a_\omega$ be the first $\tau$-coordinate
after $\tau_i$ on the $L$-chain, taking $a_\omega=\tau_\ell$ if no
intermediate site is assigned to $L$.  Define $b_\omega$ analogously for
the $R$-chain.  Then
\begin{equation*}
w_\tau(\omega)
=
c_\omega(a_\omega-\tau_i)(b_\omega-\tau_j),
\end{equation*}
where $c_\omega>0$ is the product of all remaining gap factors and is
independent of the two starting coordinates.

Let $\bar\omega$ be obtained from $\omega$ by interchanging $L$ and $R$.
Then
\begin{equation*}
d(\bar\omega)=-d(\omega),
\qquad
c_{\bar\omega}=c_\omega,
\qquad
a_{\bar\omega}=b_\omega,
\qquad
b_{\bar\omega}=a_\omega.
\end{equation*}
Hence the contribution of the complementary pair
$\{\omega,\bar\omega\}$ is
\begin{align*}
&d(\omega)c_\omega
\Bigl[
(a_\omega-\tau_i)(b_\omega-\tau_j)
-(b_\omega-\tau_i)(a_\omega-\tau_j)
\Bigr]\\
&\qquad=
(\tau_j-\tau_i)d(\omega)c_\omega(b_\omega-a_\omega).
\end{align*}
After summing over complementary pairs,
\begin{equation*}
\Delta_m^\tau(i,j)
=(\tau_j-\tau_i)C_m,
\end{equation*}
where $C_m$ depends only on the intermediate and terminal coordinates,
not on $\tau_i$ or $\tau_j$.

It remains to identify $C_m$.  Since the preceding factorization is an
algebraic identity in the two starting coordinates, evaluate it at the
boundary value
\begin{equation*}
\tau_j=\tau_{k_1}.
\end{equation*}
Every assignment with $k_1$ on the $R$-chain then has first $R$-gap zero.
The surviving assignments are exactly
\begin{equation*}
\omega=(L,\eta),
\qquad
\eta\in\{L,R\}^{m-1}.
\end{equation*}
For them,
\begin{equation*}
w_\tau(L,\eta)
=(\tau_{k_1}-\tau_i)\widehat w_\tau(\eta),
\qquad
d(L,\eta)=1+d(\eta).
\end{equation*}
Thus
\begin{equation*}
\Delta_m^\tau(i,k_1)
=
(\tau_{k_1}-\tau_i)
\sum_\eta(1+d(\eta))\widehat w_\tau(\eta).
\end{equation*}
In the residual problem both chains start at the same point
$\tau_{k_1}$.  Complementation $\eta\leftrightarrow\bar\eta$ therefore
preserves the residual weight and reverses the residual imbalance:
\begin{equation*}
\widehat w_\tau(\bar\eta)=\widehat w_\tau(\eta),
\qquad
d(\bar\eta)=-d(\eta).
\end{equation*}
Consequently
\begin{equation*}
\sum_\eta d(\eta)\widehat w_\tau(\eta)=0,
\end{equation*}
and hence
\begin{equation*}
\Delta_m^\tau(i,k_1)
=
(\tau_{k_1}-\tau_i)\mathcal P_m.
\end{equation*}
Comparing with
\begin{equation*}
\Delta_m^\tau(i,k_1)
=(\tau_{k_1}-\tau_i)C_m
\end{equation*}
gives $C_m=\mathcal P_m>0$.  Multiplying by the common factor $G$ in
\eqref{eq:common-ray-factor} proves \eqref{eq:two-chain-factorization}.
For $m=0$, the unique assignment has imbalance zero.
\end{proof}

\begin{remark}
The proof uses two complement symmetries.  The first extracts the factor
$\tau_j-\tau_i$ without requiring termwise positivity.  The equal-start
specialization then makes the second complement pairing weight-preserving
and exposes the positive residual sum $\mathcal P_m$.
\end{remark}

\medskip
\noindent\textbf{Weak-height extension lemma.}
The strict inequalities in Theorem~\ref{thm:two-chain} are used only to
justify divisions by $K(a,b)$ inside the proof.  The sign conclusion extends
to weakly ordered numerical heights while permanent labels remain distinct.
Choose strictly increasing real numbers
$\delta_1<\cdots<\delta_N$ and replace every projective coordinate by
\begin{equation*}
\tau_q^{(\varepsilon)}=\tau_q+\varepsilon\delta_q,
\qquad \varepsilon>0.
\end{equation*}
For every $\varepsilon>0$ the perturbed coordinates are strictly ordered,
so Theorem~\ref{thm:two-chain} applies.  Every determinant
\begin{equation*}
K^{(\varepsilon)}(p,q)
=\rho_p\rho_q\bigl(\tau_q^{(\varepsilon)}-\tau_p^{(\varepsilon)}\bigr)
\end{equation*}
and every fixed-union coefficient built from finitely many such determinants
is a polynomial, hence a continuous function, of the perturbed coordinates.
Letting $\varepsilon\downarrow0$ preserves nonnegativity.  Thus all later
fixed-union statements remain valid when two different permanent labels have
the same numerical height; the determinant belonging to such a degenerate
pair is then allowed to be zero.  No permanent labels are identified in this
limiting argument.

\section{Adjacent subset ranks and fixed-union exchange}
\label{sec:fixed-union}

The injective transform can be grouped according to the set of exponent
labels selected by the injection.  If $S\subset[n]$, let
$J_{\nu_S}^{(|S|)}$ denote the labeled orbit sum formed from exactly the
exponent labels in $S$.  Then
\begin{equation*}\tag{7.1}\label{eq:hamming}
\Ical_k^\nu
=
\sum_{\substack{S\subset[n]\\|S|=k}}
J_{\nu_S}^{(k)}.
\end{equation*}
Equivalently, if $\mathbf 1_S\in\{0,1\}^n$ is the indicator vector of $S$,
then the sum is over the binary states with exactly $k$ entries equal to
one.  We use this binary description only as bookkeeping; exponent labels
remain distinct even when two exponent values coincide.

Let $P_S,Q_S$ be two subset-indexed families and set
\begin{equation*}
P_k=\sum_{|S|=k}P_S,
\qquad
Q_k=\sum_{|S|=k}Q_S.
\end{equation*}
Consider the adjacent-rank determinant
\begin{equation*}
\Delta_k=P_kQ_{k+1}-P_{k+1}Q_k.
\end{equation*}
For $|S|=k$ and $|T|=k+1$, put
\begin{equation*}
C=S\cap T,
\qquad
A=S\setminus T,
\qquad
U=S\triangle T.
\end{equation*}
Thus $U\setminus A=T\setminus S$.  If $|C|=k-r$, then
\begin{equation*}
|A|=r,
\qquad
|U\setminus A|=r+1,
\qquad
|U|=2r+1,
\end{equation*}
and uniquely
\begin{equation*}
S=C\cup A,
\qquad
T=C\cup(U\setminus A).
\end{equation*}

\begin{lemma}[Adjacent-rank decomposition]\label{lem:adjacent-rank}
One has
\begin{equation*}\tag{7.2}\label{eq:adjacent-rank}
\begin{aligned}
\Delta_k
={}&
\sum_{r=0}^k
\sum_{\substack{C\cap U=\varnothing\\
|C|=k-r,\ |U|=2r+1}}
\sum_{\substack{A\subset U\\|A|=r}}
\Bigl(
P_{C\cup A}Q_{C\cup(U\setminus A)}\\
&\hspace{37mm}
-
P_{C\cup(U\setminus A)}Q_{C\cup A}
\Bigr).
\end{aligned}
\end{equation*}
\end{lemma}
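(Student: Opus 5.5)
The plan is to expand both products in $\Delta_k$ as double sums over ordered pairs $(S,T)$ and then reindex each pair by the data $(C,U,A)$. By definition,
\begin{equation*}
P_kQ_{k+1}=\sum_{|S|=k}\sum_{|T|=k+1}P_SQ_T,
\qquad
P_{k+1}Q_k=\sum_{|T'|=k+1}\sum_{|S'|=k}P_{T'}Q_{S'}.
\end{equation*}
So $\Delta_k$ is a signed sum over pairs consisting of one $k$-set and one $(k+1)$-set. In the positive part the $k$-set carries $P$; in the negative part it carries $Q$.

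First I verify that the map $(S,T)\mapsto(C,U,A)=(S\cap T,\,S\triangle T,\,S\setminus T)$ is a bijection. Its domain is all pairs with $|S|=k$, $|T|=k+1$. Its target is all triples with $C\cap U=\varnothing$, $|C|=k-r$, $|U|=2r+1$, $A\subset U$, $|A|=r$, where $0\le r\le k$. Counting gives $|S\setminus T|=r$ and $|T\setminus S|=r+1$ whenever $|S\cap T|=k-r$. Hence $|U|=2r+1$, and $r$ ranges over $0,\dots,k$. Conversely, $S=C\cup A$ and $T=C\cup(U\setminus A)$ recover the pair, as noted before the lemma. The two maps are inverse because $A\subset U$ and $U$ is disjoint from $C$. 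Under this bijection the positive part becomes
\[
\sum P_{C\cup A}Q_{C\cup(U\setminus A)}.
\]

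Next I apply the same reindexing to the negative part, using the pair $(S',T')$ with $|S'|=k$ carrying $Q$. This produces
\[
\sum P_{C\cup(U\setminus A)}Q_{C\cup A}
\]
over the identical index set. Subtracting the two sums termwise then gives \eqref{eq:adjacent-rank}.

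There is no genuine obstacle. The only care needed is the bookkeeping that each unordered role is counted once: for fixed $(C,U)$, the sum runs over $A$ with $|A|=r$, and never over complements of size $r+1$. This is automatic because $A$ is always defined as the part belonging to the $k$-set. Labels are kept distinct, as the paper requires, so no coincidence of exponent values affects the count.
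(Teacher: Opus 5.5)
Your proof is correct and follows the paper's argument. Both expand $\Delta_k$ over ordered pairs $(S,T)$ with $|S|=k$, $|T|=k+1$ and reindex via the bijection $(S,T)\mapsto(S\cap T,\,S\triangle T,\,S\setminus T)$; you are only more explicit in verifying the inverse map and in applying the same reindexing to the subtracted product $P_{k+1}Q_k$.
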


\begin{proof}
Expand $\Delta_k$ over all ordered pairs $(S,T)$ with $|S|=k$ and
$|T|=k+1$.  The pair
\begin{equation*}
(C,U)=(S\cap T,S\triangle T)
\end{equation*}
is uniquely determined, and then $A=S\setminus T$ is the unique subset of
$U$ of size $r$ for which
\begin{equation*}
S=C\cup A,
\qquad
T=C\cup(U\setminus A).
\end{equation*}
This reindexes the double sum and gives \eqref{eq:adjacent-rank}.
\end{proof}

For a fixed pair $(C,U)$ in \eqref{eq:adjacent-rank}, the \emph{complete fixed-union block} is defined by
\begin{equation*}\tag{7.3}\label{eq:fixed-block}
\begin{aligned}
\mathcal B_{C,U}
={}&
\sum_{\substack{A\subset U\\|A|=r}}
\Bigl(
P_{C\cup A}Q_{C\cup(U\setminus A)}
-
P_{C\cup(U\setminus A)}Q_{C\cup A}
\Bigr),
\end{aligned}
\end{equation*}
where $|U|=2r+1$.  Hence
\begin{equation*}
\Delta_k
=
\sum_{r=0}^k
\sum_{\substack{C\cap U=\varnothing\\|C|=k-r,\ |U|=2r+1}}
\mathcal B_{C,U}.
\end{equation*}
The set $C$ fixes the labels common to the two adjacent ranks, while $U$
fixes the labels on which they differ.  Individual choices of $A$ in
\eqref{eq:fixed-block} can have either sign; the proof keeps the complete
fixed-$(C,U)$ sum.

\paragraph{Why the complete block is essential.}
The inner sum in~\eqref{eq:fixed-block} is a fixed-cardinality shell: $A$ must
have exactly $r$ elements.  It is therefore \emph{not} the full Boolean
ownership cube.  The proof below never replaces this shell termwise by a
Boolean sum.  Instead, Step~3 performs an explicit first-switch Abel telescope;
only after summing all admissible first-switch positions does the Boolean
two-chain slack expression $\delta Z_m+L_m$ appear.  This distinction is the
reason the fixed-union theorem is stated for the complete block rather than
for its individual summands.

We next record explicitly the local three-row determinant used at the end of
the fixed-union reduction.  Fix $0<c<1$ and put
\begin{equation*}
H(u)=\frac1{(1-u)^2(1-cu)},
\qquad
Z(u)=\frac{u(1-cu)}{1-u}.
\end{equation*}
For $a\in\{1,2\}$ define
\begin{equation*}
A_a(u)=1+aZ(u),
\qquad
B_t^{(a)}(u)=A_a(u)H(u)^t
\qquad(t\ge1),
\end{equation*}
and, for a power series $R$, write $R(q)=[u^q]R(u)$.

\begin{theorem}[Mixed three-row coefficient kernel]\label{thm:mixed-kernel}
Let $a_0,a_1,a_2\in\{1,2\}$, let $1\le r<s$, and let
\begin{equation*}
0\le q_0<q_1<q_2.
\end{equation*}
Then
\begin{equation*}\tag{7.4}\label{eq:mixed-main}
D(a_0,a_1,a_2;r,s;q_0,q_1,q_2)
:=
\det
\begin{pmatrix}
A_{a_0}(q_0)&A_{a_0}(q_1)&A_{a_0}(q_2)\\
B_r^{(a_1)}(q_0)&B_r^{(a_1)}(q_1)&B_r^{(a_1)}(q_2)\\
B_s^{(a_2)}(q_0)&B_s^{(a_2)}(q_1)&B_s^{(a_2)}(q_2)
\end{pmatrix}
\ge0.
\end{equation*}
The same sign is preserved after multiplication by positive common row
factors and after the specific Toeplitz/Pascal convolutions occurring in the
coefficient transport, whose convolution matrices are totally nonnegative.
\end{theorem}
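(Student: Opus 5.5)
Write $K(u)=(1-u)^{-2}(1-cu)^{-1}$ for the kernel; since the letter $H$ is already used, we keep $H$ for it in what follows. We plan to reduce the determinant to a total-positivity statement for explicit coefficient sequences. First we compute the coefficients in closed form. Since $Z(u)=u(1-cu)/(1-u)$, we have
\begin{equation*}
A_a(u)=\frac{1+(a-1)u-acu^2}{1-u}.
\end{equation*}
Hence $A_a(q)=1+a(1-c)$ for $q\ge1$ and $A_a(0)=1$, so the first row is constant except possibly at $q_0=0$. Similarly,
\begin{equation*}
B_t^{(a)}(u)=\frac{1+(a-1)u-acu^2}{(1-u)^{2t+1}(1-cu)^t}.
\end{equation*}
Every row is thus a Toeplitz image, under the numerator polynomial, of the coefficient sequence of $(1-u)^{-m}(1-cu)^{-t}$. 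Sequences of that form are Pólya frequency sequences, being products of factors $1/(1-\alpha u)$ with $\alpha>0$.

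Next we subtract columns to exploit the nearly constant first row. Two cases arise.

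\emph{Case $q_0\ge1$.} The first row is the constant $1+a_0(1-c)$. Expanding along it, the determinant equals that constant times
\begin{equation*}
\det\begin{pmatrix}\Delta B_r(q_0,q_1)&\Delta B_r(q_1,q_2)\\ \Delta B_s(q_0,q_1)&\Delta B_s(q_1,q_2)\end{pmatrix},
\end{equation*}
where $\Delta B(p,p')=B(p')-B(p)$ is a sum of coefficients of $(1-u)B(u)$. The rows $(1-u)B_r^{(a_1)}$ and $(1-u)B_s^{(a_2)}$ again have the form
\begin{equation*}
\frac{\text{numerator}}{(1-u)^{2t}(1-cu)^t}.
\end{equation*}
Their ratio,
\begin{equation*}
\frac{\text{numerator}_{a_2}}{\text{numerator}_{a_1}}\cdot\frac{1}{(1-u)^{2(s-r)}(1-cu)^{s-r}},
\end{equation*}
should make the $2\times2$ minor of partial sums nonnegative. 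The tool here is the classical fact that for $G=F\cdot\Phi$ with $\Phi$ a Pólya frequency series, the two-row "coefficient sequence vs.\ convolved sequence" has nonnegative $2\times2$ minors: $\Phi$ has nonnegative coefficients and is log-concave, and each partial-sum window is a positive combination of shifts.

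\emph{Case $q_0=0$.} We write the first row as $(1,\kappa,\kappa)$ with $\kappa=1+a_0(1-c)$. Expanding along this row gives a positive combination of the $q_0\ge1$-type minor in columns $q_1,q_2$ and the column-differenced terms; we organize it as $\kappa$ times the previous type plus a term involving $B(0)=1$.

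The main obstacle is the mixed numerators. When $a_1\ne a_2$, the quotient of numerator polynomials
\begin{equation*}
\frac{1+(a-1)u-acu^2}{\cdots}
\end{equation*}
is not a PF series, so total positivity is not automatic. Our first attempt will be to factor
\begin{equation*}
1+(a-1)u-acu^2=(1-cu)(1+au)\quad\text{for }a=1,
\end{equation*}
and to check this factorization directly, since $1+(a-1)u-acu^2$ factors as $(1+au)(1-cu)$ in general: indeed $(1+au)(1-cu)=1+(a-c)u-acu^2$. This does not match, so we fall back to absorbing a factor $(1-u)$. Writing the numerator as $(1-u)(1+au)+au(1-c)u\cdot$(correction) gives the row as a positive sum of two PF-type pieces. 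The $2\times2$ minors then split into minors of PF pairs, each nonnegative by the Cauchy–Binet formula with Toeplitz matrices of PF sequences, which are totally nonnegative. This also yields the final sentence of the statement: Cauchy–Binet for multiplication by totally nonnegative Toeplitz/Pascal matrices preserves the sign of all $3\times3$ minors with increasing columns.

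If this splitting fails for some $(a_1,a_2)$ pair, we will verify those finitely many numerator patterns directly. This is possible because the dependence on $q$ is through explicit binomial-type coefficients in $r$ and $s$, so one can check positivity of the resulting polynomial expressions in $c\in(0,1)$.
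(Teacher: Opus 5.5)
\textbf{Gap 1: the first row and the boundary columns.} Your coefficient extraction for the first row is wrong, and the case split built on it fails. From $A_a(u)=(1+(a-1)u-acu^2)/(1-u)$ the coefficients are partial sums of $(1,a-1,-ac)$. Hence $A_a=(1,a,ad,ad,\dots)$ with $d=1-c$. The constant $1$ contributes only at $u^0$, so the value $1+a(1-c)$ never occurs, and the first row is constant only when $q_0\ge2$.

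The case $q_0=1$ is easily repaired, since $(a_0,a_0d,a_0d)=a_0d(1,1,1)+a_0c(1,0,0)$. The case $q_0=0$, however, is not a positive combination of generic minors. For $a_0=2$ and $q_1\ge2$ one has $(1,2d,2d)=2d(1,1,1)+(1-2d)(1,0,0)$ with $1-2d<0$ when $c<\tfrac12$; your own row $(1,\kappa,\kappa)$ with $\kappa>1$ has the same negative coefficient. For $q_1=1$, the row $(1,a_0,a_0d)$ produces the genuinely exceptional prefix $(0,1,q)$.

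These boundary columns carry real content. Put $X_m=B_r^{(a_1)}(m)$ and $Y_m=B_s^{(a_2)}(m)$. The point-mass part $(1,1,0)$ of $A_1=(1,1,d)$ at columns $(0,1,2)$ contributes $(X_1-1)(Y_1-1)\bigl(\phi_{a_2}(s)-\phi_{a_1}(r)\bigr)$ with $\phi_a(j)=B_j^{(a)}(2)/(B_j^{(a)}(1)-1)$. The $A_2$ cases need $(Y_m-2d)/(X_m-2d)$ to increase in $m$, anchored at $m=2$. The paper gets these from explicit formulas for $B_j^{(a)}(1)$ and $B_j^{(a)}(2)$, and from one-step-in-$j$ inequalities for three ratios $\phi,\psi,\chi$ over all type transitions (cleared numerators with nonnegative coefficients). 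It then iterates to arbitrary $r<s$ and propagates in $q$ via the slope order. Nothing in your plan supplies these inequalities.

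\textbf{Gap 2: the generic columns and the mixed types.} What is needed is the projective order of $Q_t^{(a)}=(1-u)B_t^{(a)}=N_aH^t$, $N_a=1+(a-1)u-acu^2$, between depth $r$ and depth $s>r$ for all type pairs. The ``classical fact'' you invoke is false as stated. For $G=F\Phi$, the pair $(F,G)$ is $\mathrm{TP}_2$ when $F$ is $\mathrm{PF}_2$ and $\Phi\ge0$; PF-ness of $\Phi$ does not replace this. For example, $F=(1,0,1)$ and $\Phi=(1-u)^{-1}$ give the minor $-1$ at columns $(1,2)$. Here $F=N_{a_1}H^r$, and $N_{a_1}$ has a negative coefficient.

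Your splitting $N_a=(1-u)(1+au)+ad\,u^2$ does not rescue the argument, because two PF sequences need not form a $\mathrm{TP}_2$ pair. The cross term with rows $a_1d\,u^2H^r$ and $(1+a_2u)(1-u)H^s$ has minor $-a_1d\,[u^{q-2}]H^r\cdot[u^1]\bigl((1+a_2u)(1-u)H^s\bigr)<0$ at columns $(1,q)$, $q\ge2$. Cauchy--Binet with a totally nonnegative Toeplitz matrix only transports an order already present in the source rows. The fallback of checking finitely many numerator patterns is not a finite check, since $r$, $s$ and the columns are unbounded.

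The missing idea is a reduction to two base comparisons, each checked by explicit coefficient extraction:
\begin{itemize}
\item $Q_1^{(1)}\prec Q_1^{(2)}$. From $Q_1^{(a)}(n)=an+\sum_{j\le n}c^j$, the adjacent minor is $\sum_{j\le n}c^j-nc^{n+1}>0$.
\item The reverse one-step comparison $Q_1^{(2)}\prec Q_2^{(1)}$.
\end{itemize}
Convolution by the totally nonnegative Toeplitz matrix of $H^{r-1}$ then gives $Q_r^{(1)}\prec Q_r^{(2)}\prec Q_{r+1}^{(1)}$. Transitivity for positive rows gives every $r<s$ and every type pair, and cumulative summation passes to the $B$ rows. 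The reverse step is exactly where the mixed numerators are handled, and your plan has no substitute for it.
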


Theorem~\ref{thm:mixed-kernel} is proved in
Appendix~\ref{app:mixed}.  The phrase ``three-row minor'' below always means
a determinant of the explicit form \eqref{eq:mixed-main}; no unspecified
larger total-positivity statement is used.

The fixed-union exchange is obtained by repeatedly using the elementary
Pl\"ucker identity.  For an ordered family of positive two-vectors
$s_0,s_1,\ldots$ put
\begin{equation*}
K(p,q)=\det(s_p,s_q),
\qquad
K(p,q)>0\quad(p<q).
\end{equation*}
For
\begin{equation*}
p<t_*<u<v,
\end{equation*}
one has
\begin{equation*}\tag{7.5}\label{eq:plucker-swap}
K(p,u)K(t_*,v)-K(p,v)K(t_*,u)
=
K(p,t_*)K(u,v)>0.
\end{equation*}
Thus a crossed pair of edges can be replaced by the alternative pairing
plus a nonnegative correction.

Before stating the sign theorem, one distinction is important.  The abstract
quantity $\mathcal B_{C,U}$ in \eqref{eq:fixed-block} is only the
combinatorial grouping of an adjacent-rank determinant; no sign is asserted
for arbitrary subset-indexed families $P_S,Q_S$.  In the application below,
every term in a fixed-$(C,U)$ group carries the same positive rank and pole
factors and is transported to one common three-row coefficient gauge.
After these common positive factors are removed, denote the resulting
routed coefficient block by
\begin{equation*}
\widehat{\mathcal B}_{C,U}.
\end{equation*}

\begin{theorem}[Fixed-union exchange]\label{thm:fixed-union}
For every fixed pair $(C,U)$ arising from the physical coefficient families
of the cover-cell expansion,
\begin{equation*}\tag{7.6}\label{eq:fixed-union-straight}
\widehat{\mathcal B}_{C,U}
=
\sum_{\alpha}c_\alpha D_\alpha,
\qquad
c_\alpha\ge0,
\end{equation*}
where every $D_\alpha$ is a determinant of the form
\eqref{eq:mixed-main}.  Hence
\begin{equation*}
\widehat{\mathcal B}_{C,U}\ge0.
\end{equation*}
Restoring the deleted positive common factors preserves the sign of the
routed physical contribution.
\end{theorem}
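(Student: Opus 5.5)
The plan is to prove Theorem~\ref{thm:fixed-union} by induction on $r$, where $|U|=2r+1$, reducing each complete block to the three-row minors of Theorem~\ref{thm:mixed-kernel}.

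\textbf{Base cases.} For $r=0$ the block is a single term $P_CQ_{C\cup U}-P_{C\cup U}Q_C$ with $|U|=1$, the elementary adjacent insertion. After the common rank and pole factors of the labels in $C$ are removed, this is a $2\times2$ coefficient minor. It is therefore a degenerate instance of \eqref{eq:mixed-main}: the top row is taken to be $A_{a_0}$ and two columns are bordered by a positive pivot. For $r=1$, the case $|U|=3$ already seen at $n=4$ in \eqref{eq:n4-fixed-union-sizes}, the three choices of $A\subset U$ with $|A|=1$ give a $3$-term alternating sum. I would show this sum is exactly the cofactor expansion of $\det(M_0,M_1,M_2)$. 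Here the rows are the zero-, one-, and two-mark factorial-moment layers $A_{a}$, $B_r^{(a)}$, $B_s^{(a)}$ in the common gauge $H,Z$, and the columns are the three ordered heights $q_0<q_1<q_2$ of the labels in $U$. This identification is a direct expansion of the physical coefficient families once the labels are sorted by numerical height; ties are handled by the weak-height extension lemma.

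\textbf{Inductive step.} For general $r$, I order the labels of $U$ by height, $u_0<u_1<\cdots<u_{2r}$, and encode each $A$ as an ownership word in $\{L,R\}^{2r+1}$ with exactly $r$ letters $L$. Each summand is then a product of edge weights $K(\cdot,\cdot)$ along an $L$-chain and an $R$-chain, in the sense of \eqref{eq:det-chain-weight}. Crossed edge pairs are straightened with the Pl\"ucker swap \eqref{eq:plucker-swap}. Each swap replaces a crossing by the uncrossed pairing plus a correction $K(p,t_*)K(u,v)$ with a nonnegative coefficient. I then perform the first-switch Abel telescope over the position of the first $L/R$ alternation. Summing over all admissible first-switch positions converts the fixed-cardinality shell into the Boolean two-chain imbalance $\Delta_m^K$. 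Theorem~\ref{thm:two-chain} then evaluates that imbalance as $G(\tau_j-\tau_i)\mathcal P_m$, a positive multiple of a single determinant $K(i,j)$ times residual products.

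Each surviving term therefore consists of one outer determinant on the first two or three heights, times nonnegative chain products on the remaining labels. Those remaining products are exactly convolutions by the totally nonnegative Toeplitz/Pascal matrices, which Theorem~\ref{thm:mixed-kernel} allows, applied to a rank-three minor of the form \eqref{eq:mixed-main}. Collecting terms gives \eqref{eq:fixed-union-straight} with $c_\alpha\ge0$. Restoring the positive common factors preserves the sign.

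\textbf{Main obstacle.} The hard step is the telescope. The shell constraint $|A|=r$ prevents a direct appeal to the Boolean identity, so I must check that the boundary terms of the Abel summation cancel exactly. The leftover slack must be precisely $\delta Z_m+L_m$, with $\delta\ge0$ coming from $\tau_j-\tau_i$. I would first verify this bookkeeping at $r=2$, where $|U|=5$, by explicit enumeration of the ten words, and only then generalize.
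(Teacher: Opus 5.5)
You have most of the combinatorial machinery of the paper's proof. This includes ownership words on two increasing chains, the Pl\"ucker swap \eqref{eq:plucker-swap}, and the first-switch telescope producing $\delta Z_m+L_m$ (the paper's Steps 1, 3, 4). Using Theorem~\ref{thm:two-chain} to get $L_m\ge0$ is a reasonable substitute for the paper's Step 2 on ordinary chains. But this machinery only yields the nonnegative coefficients $c_\alpha$; it never produces the determinants $D_\alpha$. Your sentence ``each surviving term consists of one outer determinant \ldots applied to a rank-three minor of the form \eqref{eq:mixed-main}'' is where the argument breaks. The factor $K(i,j)=\det(s_i,s_j)$ is a $2\times2$ determinant of ray vectors, not a $3\times3$ coefficient minor, and nothing you wrote produces the rows $A_{a_0}=1+a_0Z$, $B_r^{(a_1)}=A_{a_1}H^r$, $B_s^{(a_2)}=A_{a_2}H^s$. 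The missing idea is the paper's Step 5. Uncrossing terminates at words with no edge pair $(p,u),(t,v)$ with $p<t<u<v$. Such a terminal word has at most one ownership change, because an $R\cdots S\cdots R$ pattern yields $x<y<z<y'$ with the removable pair $(x,z),(y,y')$. Lemma~\ref{lem:terminal-row-dictionary} then reads off, after the common prefix and suffix are removed, one boundary row and two moving rows. The boundary row is $1$ for the unchanged placement plus $a_0\in\{1,2\}$ shifted placements, each contributing $Z(u)$. The moving rows sit at nested reset depths, each level contributing $H(u)$. All three are evaluated at three physical $u$-coefficient positions $q_0<q_1<q_2$. Only this normal form and dictionary turn the straightened block into $\sum_\alpha c_\alpha D_\alpha$.

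Your base cases do not supply that link either. For $r=1$ the block is not $\det(M_0,M_1,M_2)$. By Proposition~\ref{prop:pair-triple}, that determinant is a sum of pair terms $\mathcal N_{a,b}$ and triple terms $\mathcal C_{a,b,c}$, and the length-three fixed-union block is a single $\mathcal C_{a,b,c}$. Worse, the paper obtains $\mathcal C_{a,b,c}\ge0$, and hence Theorem~\ref{thm:rank-three}, \emph{from} the present theorem, so routing $r=1$ through marked incidence is circular. In any case, the mark layers are factorial moments of state vectors, not the generating functions $A_a$, $B_t^{(a)}$. The columns of \eqref{eq:mixed-main} are coefficient positions, not heights of the labels in $U$. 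For $r=0$, no bordering reduces \eqref{eq:mixed-main} to a $2\times2$ minor, because $A_1=(1,1,d,d,\ldots)$ and $A_2=(1,2,2d,\ldots)$ have no zero entries. There are also smaller points.
\begin{itemize}
\item Your induction hypothesis is never invoked in the inductive step.
\item In $\delta Z_m+L_m$, $\delta$ counts the $R$-sites forced before $t_1$ and has nothing to do with $\tau_j-\tau_i$.
\item You need Step 1's factor exhaustion (one scalar $\kappa_{C,U}$ for every ownership word) before the shell is a pure two-chain sum.
\item Theorem~\ref{thm:two-chain} assumes every edge, including the terminal edges into $\ell$, is a ray determinant, and this fails in the polarized boundary column. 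The paper covers that case by the Step 2 slack induction, seeded with the base projective order from Theorem~\ref{thm:mixed-kernel}.
\end{itemize}
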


\begin{proof}
We keep all permanent labels throughout.  The proof has four steps.

\noindent\textbf{Step 1: literal fixed-union source dictionary.}
Fix $(C,U)$ and refine further by every datum that does not vary with the
ownership of the labels in $U$: the permanent numerical skeleton, the
spectator labels and their physical placements, the retained boundary
factor, the neighboring-placement data, and one transport-homogeneous
coefficient fiber.  After the common positive ray factors are removed, the
varying part of an occurrence is only an ownership word.  Write its ordered
intermediate sites as
\begin{equation*}
i<j<t_1<\cdots<t_m<\ell.
\end{equation*}
An ownership word $\omega\in\{L,R\}^m$ assigns every $t_a$ to exactly one
of two increasing chains, one beginning at $i$ and one at $j$, both ending
at $\ell$.  Its source weight is
\begin{equation*}
w(\omega)
=
\prod_{\text{successive edges on the two chains}}K(u,v),
\qquad
K(u,v)=\det(s_u,s_v)>0\quad(u<v).
\end{equation*}
All factors outside this chain product are fixed by the refinement.  We
now record the factor exhaustion explicitly.  Let $M_{\rm spec}>0$ be the
fixed spectator monomial.  For a neighboring-placement choice
$\varepsilon$, let $c_{\rm rank}(\varepsilon)>0$ be the fixed
binomial/multinomial rank factor, let $c_{\rm inc}(\varepsilon)\ge0$ be
its incidence coefficient, and let $c_{\rm tr}(\varepsilon)\ge0$ be the
coefficient of the common source-to-physical transport inside the chosen
homogeneous fiber.  None of these four factors depends on the ownership word
$\omega$.  Thus every raw occurrence in this refined fixed-union block has
coefficient
\begin{equation*}\tag{7.6a}\label{eq:factor-exhaustion-fixed-union}
\operatorname{coef}(\varepsilon,\omega)
=
M_{\rm spec}\,c_{\rm rank}(\varepsilon)\,c_{\rm inc}(\varepsilon)\,
c_{\rm tr}(\varepsilon)\,w(\omega).
\end{equation*}
Summing the neighboring placements first gives
\begin{equation*}
\operatorname{coef}(\omega)
=
\kappa_{C,U}w(\omega),
\qquad
\kappa_{C,U}
:=
M_{\rm spec}
\sum_{\varepsilon}
c_{\rm rank}(\varepsilon)c_{\rm inc}(\varepsilon)c_{\rm tr}(\varepsilon)
\ge0,
\end{equation*}
with the same scalar $\kappa_{C,U}$ for every ownership word.  This is the
required factor-exhaustion statement: all ownership dependence is contained
in the two-chain product $w(\omega)$.  In particular, the legal source
block is not an arbitrarily weighted Boolean cube.

\noindent\textbf{Step 2: full-fiber two-chain slack sign.}
We record the full-fiber induction because a wordwise reflection is not
used.  Fix ordered sources
\begin{equation*}
a<b<t_1<\cdots<t_m<\ell
\end{equation*}
and color each $t_j$ by $R$ or $S$.  Let $w(\chi)$ be the product of the
two chain weights and let $r(\chi)$ be the number of later $R$-sites.
Define
\begin{equation*}
P_m^{a,b}(z)=\sum_\chi w(\chi)z^{r(\chi)},
\end{equation*}
\begin{equation*}
Z_m=P_m^{a,b}(1),
\qquad
R_m=\left(P_m^{a,b}\right)'(1),
\end{equation*}
\begin{equation*}
L_m=2R_m-mZ_m,
\qquad
U_m=(m+1)Z_m-2R_m.
\end{equation*}
Thus $L_m+U_m=Z_m$, and
\begin{equation*}
L_m=\sum_\chi(\#R-\#S)w(\chi).
\end{equation*}
Let $t=t_1$.  Splitting according to the color of $t$, and in the $R$-case
swapping the two chain names so that the sources remain ordered, gives the
exact recursion
\begin{equation*}
P_m^{a,b}(z)
=
K(a,t)z^mP_{m-1}^{b,t}(z^{-1})
+
K(b,t)P_{m-1}^{a,t}(z).
\end{equation*}
Differentiating at $z=1$ gives
\begin{equation*}
L_m(a,b)
=
K(a,t)U_{m-1}(b,t)
-
K(b,t)U_{m-1}(a,t),
\end{equation*}
\begin{equation*}
U_m(a,b)
=
K(a,t)L_{m-1}(b,t)
+
K(b,t)\bigl(L_{m-1}(a,t)+2U_{m-1}(a,t)\bigr).
\end{equation*}
Normalize
\begin{equation*}
z_m(a,b)=\frac{Z_m(a,b)}{K(a,b)},
\qquad
\nu_m(a,b)=\frac{U_m(a,b)}{K(a,b)}.
\end{equation*}
For $a<a'<b<t$, the Pl\"ucker identity gives
\begin{equation*}
K(a,b)K(a',t)
=
K(a,a')K(b,t)+K(a,t)K(a',b),
\end{equation*}
so
\begin{equation*}
\frac{K(a,t)}{K(a,b)}
\le
\frac{K(a',t)}{K(a',b)}.
\end{equation*}
Equivalently,
\begin{equation*}
\mathcal C(a,b,t)
:=\frac{K(a,t)K(b,t)}{K(a,b)}
\end{equation*}
is nondecreasing in the left source $a$.

For $m=0$,
\begin{equation*}
Z_0(a,b)=K(a,\ell)K(b,\ell),
\qquad
L_0=0,
\qquad
U_0=Z_0,
\end{equation*}
and $z_0=\nu_0$ is nonnegative and nondecreasing in $a$.  Assume
inductively that $L_{m-1},U_{m-1}\ge0$ and that $z_{m-1},\nu_{m-1}$ are
nondecreasing in the left source.  With
\begin{equation*}
z_x=z_{m-1}(x,t),
\qquad
\nu_x=\nu_{m-1}(x,t),
\end{equation*}
the two slack recurrences become
\begin{equation*}
L_m(a,b)
=
K(a,t)K(b,t)(\nu_b-\nu_a)\ge0,
\end{equation*}
\begin{equation*}
z_m(a,b)
=
\mathcal C(a,b,t)(z_a+z_b),
\end{equation*}
\begin{equation*}
\nu_m(a,b)
=
\mathcal C(a,b,t)
\bigl[(z_b-\nu_b)+z_a+\nu_a\bigr].
\end{equation*}
All factors on the right are nonnegative, and the $a$-dependent factors are
nondecreasing.  Hence $z_m$ and $\nu_m$ are again nondecreasing and
$L_m,U_m\ge0$.  This closes the induction for every fixed reset union.

If several $R$-sites are forced before the first $S$-site, factor their
positive initial chain weight.  If their number beyond the first is
$\delta\ge0$, the remaining contribution is
\begin{equation*}
\text{positive factor}\times\bigl(\delta Z_m+L_m\bigr)\ge0.
\end{equation*}
The polarized boundary column has the same internal recursion; only its
length-zero terminal edge changes.  The two boundary row types $a=1,2$ are
exactly the two rows covered by Theorem~\ref{thm:mixed-kernel}, which
supplies the required normalized base projective order.  Hence the same
two-slack induction applies to that boundary as well.  If one ownership
chain has no interior reset, the contribution is a structural boundary term
of the same mixed kernel and is nonnegative directly.  Thus positivity is proved after summing the entire fixed-union coloring
fiber.  No reflection or one-negative-word/one-positive-word pairing is
used.

\noindent\textbf{Step 3: exact indexed first-switch telescope.}
The complete fixed-cardinality block \eqref{eq:fixed-block} is not itself
the Boolean two-chain sum of Step 2.  The bridge is an Abel telescope over
the position of the first unresolved switch, which we now write explicitly.

Fix every Pl\"ucker switch after the first one, together with the common
prefix and suffix.  After the fixed factors in
\eqref{eq:factor-exhaustion-fixed-union} are removed, let
\begin{equation*}
a<b<t_1<\cdots<t_m<\ell
\end{equation*}
be the two current chain sources, the free later reset sites, and the common
sink.  Suppose that $\delta\ge0$ reset sites before $t_1$ are already
forced onto the $R$-chain.  For a coloring
$\chi\in\{R,S\}^m$, let
\begin{equation*}
r(\chi)=\#\{j:t_j\text{ is colored }R\},
\qquad
s(\chi)=m-r(\chi),
\end{equation*}
and let $w(\chi)$ be its two-chain product from Step 2.

For this fixed later-switch pattern, the signed multiplicity with which the
coloring $\chi$ occurs when the first-switch position is summed is
\begin{equation*}
\delta+r(\chi)-s(\chi).
\end{equation*}
Indeed, every forced $R$ reset contributes one admissible first-switch
placement with the positive adjacent-rank orientation.  Among the free
sites, moving the first switch past $t_j$ interchanges the two adjacent-rank
ownership columns exactly once.  If $t_j$ belongs to the $R$-chain this
contributes the positive orientation, while if it belongs to the $S$-chain
it contributes the negative orientation.  No later edge is changed, because
all later switches have been fixed; its chain-product factor is therefore
exactly $w(\chi)$.  Every admissible first-switch placement is encountered
once in this left-to-right telescope.

Consequently the complete contribution of this indexed stage is
\begin{equation*}\tag{7.7}\label{eq:indexed-slack-telescope}
\boxed{
\mathfrak T_{\delta,m}(a,b)
=
F_{\rm pre}
\sum_{\chi\in\{R,S\}^m}
\bigl(\delta+r(\chi)-s(\chi)\bigr)w(\chi)
=
F_{\rm pre}\bigl(\delta Z_m(a,b)+L_m(a,b)\bigr),
}
\end{equation*}
where $F_{\rm pre}>0$ is the common prefix/suffix factor.  The last equality
is the definition of $Z_m$ and $L_m$.  Step 2 gives $Z_m\ge0$ and
$L_m\ge0$, so \eqref{eq:indexed-slack-telescope} is nonnegative.  This is
the precise indexed telescope used below.  It does not identify one
fixed-cardinality shell termwise with the full Boolean fiber; only the sum
over the admissible first-switch positions produces the full-fiber slack
expression.

\noindent\textbf{Step 4: Pl\"ucker uncrossing with the indexed telescope.}
Return to the word sum in $\widehat{\mathcal B}_{C,U}$.  A crossed pair of
edges with indices
\begin{equation*}
p<t_*<u<v
\end{equation*}
obeys
\begin{equation*}
K(p,u)K(t_*,v)
=
K(p,v)K(t_*,u)
+
K(p,t_*)K(u,v).
\end{equation*}
The first term has one fewer crossing.  The second term has the positive
factor $K(p,t_*)K(u,v)$.  Freeze all later switches in that correction term
and apply \eqref{eq:indexed-slack-telescope} to the first unresolved switch.
Its coefficient is therefore a positive common factor times
$\delta Z_m+L_m$, hence is nonnegative.  This accounts for every correction
produced at that uncrossing stage.  Repeating the same argument removes
crossings one at a time.  Since the crossing number is a nonnegative integer
and strictly decreases whenever the first term is used, the procedure
terminates after finitely many steps.

\noindent\textbf{Step 5: terminal normal-form dictionary.}
Call an ownership word \emph{terminal} if its two increasing chains contain
no pair of edges $(p,u)$ and $(t,v)$ with
\begin{equation*}
p<t<u<v.
\end{equation*}
A terminal word has at most one change of ownership.  Indeed, suppose it
contained the pattern $R\cdots S\cdots R$.  Choose two consecutive
$R$-sites $x<z$ with at least one $S$-site between them, and let $y$ be the
\emph{last} $S$-site strictly between $x$ and $z$.  Let $y'$ be the next
$S$-site after $z$, or the common sink if there is no such later $S$-site.
Then
\begin{equation*}
x<y<z<y',
\end{equation*}
and the $R$-edge $(x,z)$ and the $S$-edge $(y,y')$ form a removable
nonnested pair, contradiction.  The pattern $S\cdots R\cdots S$ is
excluded symmetrically.  Thus, after a common prefix and common suffix are
factored out, the terminal local word consists of one nested insertion and
no second independent switch.

\begin{lemma}[Terminal-word coefficient dictionary]
\label{lem:terminal-row-dictionary}
Work in one refined fixed-union fiber after the common prefix, suffix,
spectator monomial, rank factor, ray factor, and transport factor have been
removed.  Suppose a terminal ownership word has boundary type
$a_0\in\{1,2\}$ and two moving depths $1\le r<s$, with moving types
$a_1,a_2\in\{1,2\}$.  In the normalized coefficient variable $u$, its three
local row-generating functions are exactly
\begin{equation*}
A_{a_0}(u),
\qquad
B_r^{(a_1)}(u),
\qquad
B_s^{(a_2)}(u),
\end{equation*}
where
\begin{equation*}
A_a(u)=1+aZ(u),
\qquad
B_t^{(a)}(u)=A_a(u)H(u)^t,
\end{equation*}
and
\begin{equation*}
H(u)=\frac1{(1-u)^2(1-cu)},
\qquad
Z(u)=\frac{u(1-cu)}{1-u}.
\end{equation*}
Consequently, extraction at three physical coefficient positions
$q_0<q_1<q_2$ gives precisely the three rows appearing in
\eqref{eq:mixed-main}.
\end{lemma}

\begin{proof}
The common coefficient gauge separates a terminal insertion into a boundary
factor and a product of independent reset factors.  At the boundary, the
unchanged placement contributes $1$.  The shifted neighboring placement
contributes
\begin{equation*}
Z(u)
=
\frac{u}{1-u}-\frac{cu^2}{1-u}
=
\frac{u(1-cu)}{1-u}.
\end{equation*}
There is one such shifted placement for an ordinary terminal edge and two
for a polarized terminal edge.  Thus the boundary row is
\begin{equation*}
1+aZ(u)=A_a(u),
\qquad a\in\{1,2\}.
\end{equation*}

Each additional nested reset level contributes three independent geometric
coefficient sums: one from each of the two neighboring chain gaps and one
from the retained $c$-weighted propagation.  Their product is
\begin{equation*}
\left(\sum_{i\ge0}u^i\right)
\left(\sum_{j\ge0}u^j\right)
\left(\sum_{h\ge0}(cu)^h\right)
=
\frac1{(1-u)^2(1-cu)}
=H(u).
\end{equation*}
Because the fiber is transport-homogeneous, these factors are independent of
the ownership choice already frozen in the refinement.  Starting from a
boundary type $a$ and passing through $t$ nested reset levels therefore
produces
\begin{equation*}
A_a(u)H(u)^t=B_t^{(a)}(u).
\end{equation*}
A terminal word has only one ownership change, so after the common factors
are deleted there is one boundary row and exactly two moving depths, ordered
$r<s$.  This proves the stated dictionary and excludes a fourth local row.
\end{proof}

Because the two ranks are adjacent, Lemma~\ref{lem:terminal-row-dictionary}
shows that the single insertion leaves exactly three independent local
coefficient rows after all common prefix/suffix factors are removed: one
boundary row and two moving rows.  The boundary row records whether the
terminal edge is ordinary or polarized, so it is uniquely one of
\begin{equation*}
A_1,\quad A_2.
\end{equation*}
Each moving row records whether its local middle state is paired or mixed, so
it is uniquely one of
\begin{equation*}
B_r^{(1)},\ B_r^{(2)}
\quad\text{and}\quad
B_s^{(1)},\ B_s^{(2)},
\end{equation*}
with $1\le r<s$ because the two moving depths occur in their order along
the nested chains.  Selecting three distinct physical coefficient positions
gives uniquely
\begin{equation*}
0\le q_0<q_1<q_2.
\end{equation*}
Hence every terminal local word determines, after removal of its common
positive factors, exactly one tuple
\begin{equation*}
(a_0,a_1,a_2;r,s;q_0,q_1,q_2),
\qquad a_i\in\{1,2\},
\end{equation*}
and therefore exactly one determinant
\begin{equation*}
D(a_0,a_1,a_2;r,s;q_0,q_1,q_2)
\end{equation*}
of the form \eqref{eq:mixed-main}.  Conversely, fixing this tuple together
with the already fixed common prefix and suffix reconstructs the local
terminal ownership pattern.  Thus no fourth exterior species is hidden at
termination.  If one ownership chain has no interior reset, the same
deletion leaves one of the explicit boundary cases already included in
Theorem~\ref{thm:mixed-kernel}.

Theorem~\ref{thm:mixed-kernel} gives every terminal determinant a
nonnegative sign.  Combining the nonnegative telescope coefficients from
Step 4 with the terminal dictionary yields
\begin{equation*}
\widehat{\mathcal B}_{C,U}
=
\sum_\alpha c_\alpha D_\alpha,
\qquad c_\alpha\ge0,
\end{equation*}
which is \eqref{eq:fixed-union-straight}.  If some permanent labels have
equal numerical heights, apply the weak-height extension lemma following
Theorem~\ref{thm:two-chain} and pass to the limit; all displayed
coefficients are continuous in the determinant entries.

Finally, within the refined fiber the source-to-physical kernel is a
positive row/column scaling of a Pascal matrix, hence totally nonnegative.
Cauchy--Binet therefore preserves the just-proved source-side signs.
Distinct transport fibers are never merged before this fiberwise positivity
is established.
\end{proof}

The importance of Theorem~\ref{thm:fixed-union} is that the size of $U$ may
grow with $n$, while the terminal determinant always has three rows and
three columns.

\section{Zero-, one-, and two-mark incidence contraction}
\label{sec:marked}

\begin{definition}[State, active set, and mark]
Let $[D]=\{1,\ldots,D\}$ be a finite set of permanently labeled sites.  A
\emph{state} is a subset $S\subset[D]$.  Fix an integer $K$.  An
\emph{active set} is a subset $I\subset[D]$ of cardinality $K$.  A
\emph{mark set} is a subset $J\subset I$ whose elements are required to be
present in the state.  Thus the incidence condition for a marked state is
\begin{equation*}
J\subset S\subset I.
\end{equation*}
A zero-mark, one-mark, or two-mark term means respectively $|J|=0,1,2$.
\end{definition}

Let $W_S\in\R^3$ be a three-row coefficient vector attached to each state
$S\subset[D]$, and put
\begin{equation*}
F_m
=
\sum_{\substack{S\subset[D]\\|S|=m}}W_S.
\end{equation*}
For $r=0,1,2$, define
\begin{equation*}
M_r
=
\sum_m
m^{\underline r}
\frac{\binom Km}{\binom Dm}F_m,
\end{equation*}
where
\begin{equation*}
m^{\underline r}=m(m-1)\cdots(m-r+1).
\end{equation*}
\begin{lemma}[Marked-incidence identity]\label{lem:marked-incidence}
For $r=0,1,2$,
\begin{equation*}\tag{8.1}\label{eq:marked-incidence}
M_r
=
\frac{r!}{\binom DK}
\sum_{\substack{J\subset I\subset[D]\\|J|=r,\ |I|=K}}
\sum_{J\subset S\subset I}W_S.
\end{equation*}
\end{lemma}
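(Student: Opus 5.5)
The plan is a direct double count: exchange the order of summation on the right-hand side of \eqref{eq:marked-incidence}, count the marked configurations attached to each fixed state, and reduce the resulting multiplicity to the weight $m^{\underline r}\binom Km/\binom Dm$ by a binomial identity. Nothing from the earlier sections is needed, since the identity is purely combinatorial. The weights $W_S\in\R^3$ enter linearly, so it suffices to verify the identity componentwise.

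\textbf{Step 1: exchange the order of summation.} I would rewrite the triple sum over $(J,I,S)$ with $J\subset S\subset I$, $|J|=r$, $|I|=K$ as a sum over states $S\subset[D]$. Each state $S$ carries the multiplicity $N_r(S)$, defined as the number of pairs $(J,I)$ with $J\subset S$, $|J|=r$, and $S\subset I\subset[D]$, $|I|=K$. The two choices are independent once $S$ is fixed. If $|S|=m$, there are $\binom mr$ choices of $J$. There are $\binom{D-m}{K-m}$ choices of $I$, namely $S$ together with $K-m$ further labels from the $D-m$ remaining ones. Hence
\begin{equation*}
N_r(S)=\binom mr\binom{D-m}{K-m},
\end{equation*}
which depends only on $m=|S|$, with the convention that it vanishes when $m<r$ or $m>K$.

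\textbf{Step 2: group states by cardinality.} Grouping states by $m$ and using $r!\binom mr=m^{\underline r}$, the right-hand side becomes
\begin{equation*}
\sum_m m^{\underline r}\,\frac{\binom{D-m}{K-m}}{\binom DK}\,F_m .
\end{equation*}

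\textbf{Step 3: the binomial identity.} It remains to check
\begin{equation*}
\binom DK\binom Km=\binom Dm\binom{D-m}{K-m}.
\end{equation*}
Both sides count the nested pairs $S\subset I\subset[D]$ with $|S|=m$ and $|I|=K$: the left side chooses $I$ first, the right side chooses $S$ first. Dividing gives $\binom{D-m}{K-m}/\binom DK=\binom Km/\binom Dm$, which is exactly the coefficient in the definition of $M_r$. The convention that these coefficients vanish for $m>K$ matches, since $\binom Km=0$ there.

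There is no real obstacle here. The only point needing care is the bookkeeping of the ranges: terms with $m<r$ vanish on both sides, through $m^{\underline r}$ and through $\binom mr$ respectively. The permanent labelling is also important, because it is what makes the count $\binom{D-m}{K-m}$ of supersets $I$ exact. The argument does not use $r\le2$, so it gives the identity for every $r$; only $r=0,1,2$ are needed later.
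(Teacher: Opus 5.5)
Your proof is correct and is essentially the paper's own argument: a state $S$ with $|S|=m$ occurs $\binom mr\binom{D-m}{K-m}$ times on the right, and the factor $r!/\binom DK$ turns this into $m^{\underline r}\binom Km/\binom Dm$. The only addition is that you spell out the nested-subset identity $\binom DK\binom Km=\binom Dm\binom{D-m}{K-m}$, which the paper uses without stating it.
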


\begin{proof}
Fix a state $S$ with $|S|=m$.  On the right it occurs
\begin{equation*}
\binom mr\binom{D-m}{K-m}
\end{equation*}
times.  Multiplication by $r!/\binom DK$ gives
\begin{equation*}
m^{\underline r}
\frac{\binom Km}{\binom Dm},
\end{equation*}
which is its coefficient on the left.
\end{proof}

For $|J|\le2$, define
\begin{equation*}
U_J
=
\sum_{\substack{I\supset J\\|I|=K}}
\sum_{J\subset S\subset I}W_S.
\end{equation*}
Then
\begin{equation*}
M_0=\binom DK^{-1}U_\varnothing,
\end{equation*}
\begin{equation*}
M_1=\binom DK^{-1}\sum_iU_i,
\end{equation*}
and
\begin{equation*}
M_2
=
2\binom DK^{-1}\sum_{i<j}U_{ij}.
\end{equation*}

\begin{proposition}[Pair/triple incidence decomposition]
\label{prop:pair-triple}
One has
\begin{equation*}\tag{8.2}\label{eq:pair-triple}
\det(M_0,M_1,M_2)
=
\frac{2}{\binom DK^3}
\left(
\sum_{\{a,b\}}\mathcal N_{a,b}
+
\sum_{\{a,b,c\}}\mathcal C_{a,b,c}
\right),
\end{equation*}
where
\begin{equation*}\tag{8.3}\label{eq:Nab}
\mathcal N_{a,b}
=
\det(U_\varnothing,U_a,U_{ab})
+
\det(U_\varnothing,U_b,U_{ab})
\end{equation*}
and
\begin{equation*}\tag{8.4}\label{eq:Cabc}
\begin{aligned}
\mathcal C_{a,b,c}
={}&
\det(U_\varnothing,U_a,U_{bc})
+
\det(U_\varnothing,U_b,U_{ac})\\
&+
\det(U_\varnothing,U_c,U_{ab}).
\end{aligned}
\end{equation*}
\end{proposition}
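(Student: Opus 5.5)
The plan is a direct multilinear expansion of the determinant, regrouped by the union of the mark indices. By the three displayed formulas preceding the proposition, the columns are proportional to incidence sums:
\begin{equation*}
M_0=\binom DK^{-1}U_\varnothing,
\qquad
M_1=\binom DK^{-1}\sum_iU_i,
\qquad
M_2=2\binom DK^{-1}\sum_{j<k}U_{jk}.
\end{equation*}
These follow from Lemma~\ref{lem:marked-incidence} with $r=0,1,2$, after grouping the $(J,I)$ sum by $J$. I would first pull the scalar factors out of the three columns. This produces the global factor $2\binom DK^{-3}$, and trilinearity of $\det$ in its columns then gives
\begin{equation*}
\det(M_0,M_1,M_2)
=
\frac{2}{\binom DK^3}
\sum_{i}\sum_{j<k}\det(U_\varnothing,U_i,U_{jk}).
\end{equation*}

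The second step is to reindex this double sum by the set $E=\{i\}\cup\{j,k\}$ of labels that occur. Since $j\neq k$, either $|E|=2$ (when $i\in\{j,k\}$) or $|E|=3$ (when $i\notin\{j,k\}$).

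In the case $|E|=2$, write $E=\{a,b\}$. Then $\{j,k\}=\{a,b\}$ is forced, and $i$ is $a$ or $b$. These two terms give exactly $\det(U_\varnothing,U_a,U_{ab})+\det(U_\varnothing,U_b,U_{ab})=\mathcal N_{a,b}$, as in \eqref{eq:Nab}.

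In the case $|E|=3$, write $E=\{a,b,c\}$. Choosing $i\in E$ determines $\{j,k\}=E\setminus\{i\}$ uniquely, so each unordered triple contributes exactly the three terms of \eqref{eq:Cabc}, namely $\mathcal C_{a,b,c}$. The map $(i,\{j,k\})\mapsto E$ together with the choice of $i$ is a bijection onto these two families, so every term is counted exactly once. This yields \eqref{eq:pair-triple}.

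There is no real obstacle in this argument. The only points needing care are the exact normalizations of $M_1$ and $M_2$, including the factor $r!=2$ and the ordering $j<k$ versus ordered pairs, and checking that $U_{jk}$ is symmetric in $j,k$ so that the pair index is genuinely unordered. I would check both directly from the definition of $U_J$ and Lemma~\ref{lem:marked-incidence}. No sign information is claimed at this stage; positivity of $\mathcal N_{a,b}$ and $\mathcal C_{a,b,c}$ is a separate matter.
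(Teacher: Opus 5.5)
Your proposal is correct and follows the paper's proof: you pull the scalars $\binom{D}{K}^{-1}$, $\binom{D}{K}^{-1}$, and $2\binom{D}{K}^{-1}$ out of the three columns, expand $\det\bigl(U_\varnothing,\sum_iU_i,\sum_{j<k}U_{jk}\bigr)$ by trilinearity, and split the terms by whether $\{i\}\cup\{j,k\}$ has two or three elements. Your explicit bijection $(i,\{j,k\})\mapsto(E,i)$ is a slightly more detailed version of the paper's remark that the two cases are exhaustive and disjoint.
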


\begin{proof}
By multilinearity,
\begin{equation*}
\det(M_0,M_1,M_2)
=
\frac{2}{\binom DK^3}
\sum_i\sum_{j<\ell}
\det(U_\varnothing,U_i,U_{j\ell}).
\end{equation*}
If $i\in\{j,\ell\}$, the union of the marked positions has size two and
the term belongs to one $\mathcal N_{a,b}$.  Otherwise the union has size
three and the term belongs to one $\mathcal C_{a,b,c}$.  These two cases
are exhaustive and disjoint, proving \eqref{eq:pair-triple}.
\end{proof}

For the two-position term write
\begin{equation*}
P=W_{00},
\qquad
X=W_{10},
\qquad
Y=W_{01},
\qquad
Z=W_{11}.
\end{equation*}
If the positive active-rank weights are $a_0,a_1,a_2$, then
\begin{equation*}
U_\varnothing=a_0P+a_1X+a_1Y+a_2Z,
\end{equation*}
\begin{equation*}
U_a=a_1X+a_2Z,
\qquad
U_b=a_1Y+a_2Z,
\qquad
U_{ab}=a_2Z.
\end{equation*}

\begin{lemma}[Symmetric two-middle identity]\label{lem:two-middle}
One has
\begin{equation*}\tag{8.5}\label{eq:two-middle}
\mathcal N_{a,b}
=
a_0a_1a_2\det(P,X+Y,Z).
\end{equation*}
\end{lemma}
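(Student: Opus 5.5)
We prove \eqref{eq:two-middle} by direct multilinear expansion of the two determinants in \eqref{eq:Nab}, after substituting
\begin{equation*}
U_\varnothing=a_0P+a_1X+a_1Y+a_2Z,\qquad
U_a=a_1X+a_2Z,\qquad
U_b=a_1Y+a_2Z,\qquad
U_{ab}=a_2Z.
\end{equation*}
The third column $U_{ab}=a_2Z$ is a multiple of $Z$, so every term containing $Z$ in the first or second column vanishes. We may therefore replace $U_a$ by $a_1X$, $U_b$ by $a_1Y$, and $U_\varnothing$ by $a_0P+a_1X+a_1Y$. This gives
\begin{equation*}
\det(U_\varnothing,U_a,U_{ab})=a_1a_2\det(a_0P+a_1X+a_1Y,\,X,\,Z).
\end{equation*}

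In this first determinant the $a_1X$ part of the first column repeats the second column and drops out. The result is
\begin{equation*}
a_1a_2\bigl(a_0\det(P,X,Z)+a_1\det(Y,X,Z)\bigr).
\end{equation*}
Symmetrically, the second determinant equals
\begin{equation*}
a_1a_2\bigl(a_0\det(P,Y,Z)+a_1\det(X,Y,Z)\bigr).
\end{equation*}

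Adding the two, the cross terms $a_1^2a_2\det(Y,X,Z)$ and $a_1^2a_2\det(X,Y,Z)$ cancel by antisymmetry under swapping the first two columns. What remains is
\begin{equation*}
a_0a_1a_2\bigl(\det(P,X,Z)+\det(P,Y,Z)\bigr)=a_0a_1a_2\det(P,X+Y,Z),
\end{equation*}
by linearity in the middle column.

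There is no real obstacle here. The only point requiring care is that the cancellation of the $X$–$Y$ cross terms depends on the two symmetric summands in $\mathcal N_{a,b}$ carrying the same weight $a_1$. This holds because both one-mark states $W_{10}$ and $W_{01}$ have cardinality one.
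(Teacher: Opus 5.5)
Your proof is correct and follows the paper's argument exactly: substitute the formulas for $U_\varnothing,U_a,U_b,U_{ab}$, expand by trilinearity, discard terms with repeated columns, and cancel the cross terms using $\det(Y,X,Z)+\det(X,Y,Z)=0$. Your write-up just makes explicit the intermediate expansions that the paper leaves to the reader.
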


\begin{proof}
Insert the displayed formulas for $U_\varnothing,U_a,U_b,U_{ab}$ into
\eqref{eq:Nab} and expand by trilinearity.  Terms with repeated columns
vanish.  The only cross terms cancel because
\begin{equation*}
\det(Y,X,Z)+\det(X,Y,Z)=0.
\end{equation*}
The remaining terms give \eqref{eq:two-middle}.
\end{proof}

The sum $X+Y$ is essential.  No sign is asserted for
$\det(P,X,Z)$ or $\det(P,Y,Z)$ separately.  By
Theorem~\ref{thm:mixed-kernel}, the physical coefficient rows occurring in
\eqref{eq:two-middle} have
\begin{equation*}
\mathcal N_{a,b}\ge0.
\end{equation*}

For later use we make the three-position dictionary explicit.  Fix one
fully refined occurrence fiber: permanent skeleton, provenance signature,
spectator labels and placements, retained boundary data, physical
multi-index, and kernel species.  Fix all ownership coordinates except
three distinguished positions $T=\{a,b,c\}$.  Varying the three bits
produces exactly eight states.  By the marked-incidence identity, changing
the mark set only imposes $J\subset S$; it does not change the unmarked
source atom or its scalar.  Hence these eight states are literally the
eight endpoint words of one fixed reset/union coloring fiber from
Theorem~\ref{thm:fixed-union}, with the same chain-product weights.

The same refinement also fixes the entire source-to-physical transport.  If
$R=|\rho|$, the kernel has the form
\begin{equation*}
\Pi_{\mathfrak F}(p,m)
=
\kappa_{\mathfrak F}
\binom pR
\binom{p-R}{m}
B_{\mathfrak F}^{p-R-m},
\qquad \kappa_{\mathfrak F}\ge0.
\end{equation*}
After $q=p-R$, this is a positive row/column scaling of the Pascal matrix
$\binom qm$, and is therefore totally nonnegative of every finite order.
Thus Theorem~\ref{thm:fixed-union} is applied on the source side inside one
fiber and ordinary Cauchy--Binet carries its order-three sign to the physical
coefficient system.  No heterogeneous cross-kernel statement and no generic
``positive convolution preserves minors'' principle is used.

For three active positions, the eight states split into the complementary
pairs
\begin{equation*}
100/011,
\qquad
010/101,
\qquad
001/110.
\end{equation*}
The three determinants in \eqref{eq:Cabc} are exactly this length-three
fixed-union block.  Hence Theorem~\ref{thm:fixed-union} gives
\begin{equation*}\tag{8.6}\label{eq:triple-positive}
\mathcal C_{a,b,c}\ge0.
\end{equation*}
Combining \eqref{eq:pair-triple}, \eqref{eq:two-middle}, and
\eqref{eq:triple-positive} gives the dimension-independent contraction.

\begin{theorem}[Rank-three contraction]\label{thm:rank-three}
For every physical coefficient family obtained after the complete
fixed-union assembly,
\begin{equation*}
\det(M_0,M_1,M_2)\ge0.
\end{equation*}
Only the zero-, one-, and two-mark layers are required, independently of
$D$ and independently of the ambient variable dimension.
\end{theorem}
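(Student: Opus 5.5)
The proof assembles results already in hand. The plan is to expand $\det(M_0,M_1,M_2)$ by multilinearity, using the expressions of $M_0,M_1,M_2$ through $U_\varnothing$, $U_i$ and $U_{ij}$ supplied by Lemma~\ref{lem:marked-incidence}. Proposition~\ref{prop:pair-triple} then gives $\det(M_0,M_1,M_2)$ as a positive multiple $2/\binom DK^3$ of a sum of pair blocks $\mathcal N_{a,b}$ and triple blocks $\mathcal C_{a,b,c}$. This split is exhaustive because the marked sets have size at most two. Hence the union of the one-mark and two-mark positions in any term has size two or three, and no larger local configuration occurs. This is the source of the independence from $D$ and from the ambient dimension. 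Since the prefactor is positive, it suffices to show $\mathcal N_{a,b}\ge0$ and $\mathcal C_{a,b,c}\ge0$ for every pair and every triple.

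For the pair blocks, I will apply Lemma~\ref{lem:two-middle} to write $\mathcal N_{a,b}=a_0a_1a_2\det(P,X+Y,Z)$, where the active-rank weights $a_i$ are positive. Within one refined fiber, the rows $P$, $X+Y$, $Z$ are the zero-, symmetrized one-, and two-mark rows. I will check that, after the common positive prefix, suffix and transport factors are removed, they match the boundary row $A_{a_0}$ and the moving rows $B_r^{(a_1)},B_s^{(a_2)}$ of Lemma~\ref{lem:terminal-row-dictionary}. The symmetrized middle row $X+Y$ should be what produces the polarized type $a=2$. Theorem~\ref{thm:mixed-kernel} then gives the sign on the source side. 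Finally, Cauchy--Binet with the totally nonnegative Pascal transport $\Pi_{\mathfrak F}$ carries the sign to the physical rows.

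For the triple blocks, I will use the dictionary recorded just before the statement. Once all ownership coordinates outside $T=\{a,b,c\}$ are frozen, the eight states are the endpoint words of one fixed-union coloring fiber, with unchanged chain-product weights. The three determinants in $\mathcal C_{a,b,c}$ are the complementary pairs $100/011$, $010/101$, $001/110$, which together form the complete $|U|=3$ fixed-union block. Theorem~\ref{thm:fixed-union} then gives $\widehat{\mathcal B}\ge0$. Restoring the positive common factors and transporting through the Pascal kernel gives $\mathcal C_{a,b,c}\ge0$. Summing the fiber contributions, which is legitimate because no two fibers are merged before their signs are established, and combining with the pair case gives the theorem.

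The main obstacle I expect is the identification step in the pair case. One must verify that $\det(P,X+Y,Z)$ is a nonnegative combination of determinants of the exact form \eqref{eq:mixed-main}, and not merely something resembling it. In particular, the individual determinants $\det(P,X,Z)$ and $\det(P,Y,Z)$ have no sign, so the symmetrization must be matched to the row types precisely. I would first try to exhibit $(P,X+Y,Z)$ directly as $(A_{a_0},B_r^{(a_1)},B_s^{(a_2)})$ up to positive row factors. If that fails, I would treat the two-position block as the degenerate $r=0$ case of the fixed-union exchange and reroute it through Theorem~\ref{thm:fixed-union}.
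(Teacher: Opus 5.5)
Your proposal follows the paper's own argument: Proposition~\ref{prop:pair-triple} splits $\det(M_0,M_1,M_2)$ into pair and triple blocks, Lemma~\ref{lem:two-middle} together with Theorem~\ref{thm:mixed-kernel} gives the sign of $\mathcal N_{a,b}$, and the complementary-pair dictionary $100/011$, $010/101$, $001/110$ identifies $\mathcal C_{a,b,c}$ with a length-three fixed-union block whose sign comes from Theorem~\ref{thm:fixed-union}. The identification you flag as the main obstacle is not worked out in more detail in the paper either: it asserts $\mathcal N_{a,b}\ge0$ by direct appeal to Theorem~\ref{thm:mixed-kernel}, so your fallback through the fixed-union theorem is not needed to match it.
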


\begin{remark}[Why the exterior rank stays three]
The ambient dimension enters through the number $D$ of available labeled
sites, but the determinant in Theorem~\ref{thm:rank-three} involves only the
factorial moments of orders $0,1,2$.  Consequently the incidence expansion
can involve at most three distinct marked positions.  This is the precise
sense in which increasing $n$ lengthens the ownership word without creating a
new four-position exterior species.
\end{remark}

The passage from the common coefficient gauge to adjacent physical
coefficient columns is sign preserving.  We record the elementary formula
because it is used in the final assembly.

\begin{lemma}[Positive adjacent-column contraction]
\label{lem:positive-contraction}
Let $u=(u_0,\ldots,u_N)$ and $v=(v_0,\ldots,v_N)$ be two coefficient rows,
and let $\Pi=(\pi_{pm})$ be the binomial coefficient-transport matrix used
to pass from the common gauge to physical coefficient columns.  For two
adjacent physical columns $m,m+1$,
\begin{equation*}\tag{8.7}\label{eq:cauchy-binet-adjacent}
\begin{aligned}
&\det
\begin{pmatrix}
(u\Pi)_m&(u\Pi)_{m+1}\\
(v\Pi)_m&(v\Pi)_{m+1}
\end{pmatrix}\\
&\qquad=
\sum_{0\le p<q\le N}
\det
\begin{pmatrix}
u_p&u_q\\
v_p&v_q
\end{pmatrix}
\det
\begin{pmatrix}
\pi_{pm}&\pi_{p,m+1}\\
\pi_{qm}&\pi_{q,m+1}
\end{pmatrix}.
\end{aligned}
\end{equation*}
For the Pascal/binomial transport
\begin{equation*}
\pi_{pm}=\binom{p}{m},
\end{equation*}
every adjacent $2\times2$ minor on the right of
\eqref{eq:cauchy-binet-adjacent} is nonnegative.
Hence a nonnegative common-gauge projective source remains nonnegative
after the physical coefficient extraction.
\end{lemma}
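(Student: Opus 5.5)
The identity \eqref{eq:cauchy-binet-adjacent} is the order-two case of the Cauchy--Binet formula. The nonnegativity of the adjacent Pascal minors is an elementary binomial computation.

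For the identity, form the $2\times(N+1)$ matrix $W$ with rows $u$ and $v$, and let $\Pi_{[m,m+1]}$ be the $(N+1)\times 2$ submatrix of $\Pi$ formed by columns $m$ and $m+1$. The left side of \eqref{eq:cauchy-binet-adjacent} is then $\det\bigl(W\,\Pi_{[m,m+1]}\bigr)$. Cauchy--Binet expands this as the sum, over all two-element row sets $\{p<q\}\subset\{0,\ldots,N\}$, of the product of the corresponding $2\times2$ minor of $W$ and the $2\times 2$ minor of $\Pi_{[m,m+1]}$ in rows $p,q$. This is exactly the right side. One can also check it directly by expanding both sides bilinearly: the diagonal terms $p=q$ cancel, and each pair $p\ne q$ regroups into the product of the two antisymmetric minors.

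For the sign, the minor to check for $p<q$ is
\begin{equation*}
\binom pm\binom q{m+1}-\binom p{m+1}\binom qm.
\end{equation*}
If $\binom pm=0$, then $p<m$, so $\binom p{m+1}=0$ as well and the minor vanishes. Otherwise I would use $\binom{x}{m+1}=\binom xm\frac{x-m}{m+1}$. Because of the convention $\binom xm=0$ for $x<m$, this holds for every integer $x\ge0$, including the edge case $x=m$ where $\binom{m}{m+1}=0$. Substituting turns the minor into
\begin{equation*}
\binom pm\binom qm\,\frac{(q-m)-(p-m)}{m+1}
=\binom pm\binom qm\frac{q-p}{m+1}\ge0 .
\end{equation*}
This is the standard total positivity of consecutive columns of the Pascal matrix. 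Both factors in every summand on the right of \eqref{eq:cauchy-binet-adjacent} are therefore controlled: the Pascal minor is nonnegative, and the source minor $\det\begin{pmatrix}u_p&u_q\\ v_p&v_q\end{pmatrix}$ is nonnegative for $p<q$ by the hypothesis that the common-gauge pair is projectively ordered. Hence the physical adjacent determinant is nonnegative.

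The only real subtlety is interpretive. The phrase ``nonnegative common-gauge projective source'' must mean that every ordered $2\times2$ minor of $(u;v)$ with $p<q$ is nonnegative, not just the adjacent ones. If the earlier sections only supply adjacent source minors, I would first upgrade them to all ordered minors. When $u$ is positive, the standard ratio argument does this: the quotients $v_p/u_p$ are then nondecreasing in $p$. When the rows contain zeros, I would apply the same small strictly ordered perturbation used in the weak-height extension lemma and pass to the limit. The transport used in the fibers is $\kappa\binom pR\binom{p-R}mB^{p-R-m}$, which is a positive row/column rescaling of the Pascal matrix, and such rescalings only multiply each minor by positive factors. So the same conclusion holds for that transport too.
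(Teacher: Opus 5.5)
Your proof is correct and is essentially the paper's: the $2\times2$ Cauchy--Binet expansion plus the identity $\binom pm\binom q{m+1}-\binom p{m+1}\binom qm=\binom pm\binom qm\frac{q-p}{m+1}$. Your observation that $\binom{x}{m+1}=\binom xm\frac{x-m}{m+1}$ holds for every integer $x\ge0$ handles the out-of-range cases more cleanly than the paper's remark about setting coefficients to zero.

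Your reading of the hypothesis as nonnegativity of every source minor with $p<q$ is exactly how the lemma is used in the assembly, where the source is shown coefficientwise nonnegative before contraction. So your fallback for merely adjacent minors is unnecessary. Its zero-entry branch would also fail, because adjacent nonnegativity does not propagate across zero entries. For example, $u=(1,0,1)$, $v=(0,0,-1)$ has both adjacent minors equal to $0$, but $u_0v_2-u_2v_0=-1$ and the physical bracket at $m=0$ equals $-2$, so no perturbation argument can recover it.
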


\begin{proof}
Equation~\eqref{eq:cauchy-binet-adjacent} is the $2\times2$ Cauchy--Binet
formula.  If $p<q$ and $p,q\ge m$, then
\begin{equation*}
\begin{aligned}
&\binom{p}{m}\binom{q}{m+1}
-\binom{p}{m+1}\binom{q}{m}\\
&\qquad=
\binom{p}{m}\binom{q}{m}
\frac{q-p}{m+1}\ge0.
\end{aligned}
\end{equation*}
If one of the binomial coefficients is outside its natural range, the same
minor is obtained by setting that coefficient equal to zero and is again
nonnegative.  Thus every weight in the Cauchy--Binet sum is nonnegative.
\end{proof}

\section{Assembly of the order-raising residual}
\label{sec:assembly}

We return to the residual \eqref{eq:residual} and connect it to the finite
coefficient identities of Sections~\ref{sec:two-chain}--\ref{sec:marked}.  Fix
\begin{equation*}
2\le k\le n-2.
\end{equation*}

Fix maximum-length saturated unit-cover chains
\begin{equation*}
\lambda=\nu^{(0)}\succ\nu^{(1)}\succ\cdots\succ\nu^{(p)}=\gamma
\end{equation*}
and
\begin{equation*}
\gamma=\eta^{(0)}\succ\eta^{(1)}\succ\cdots\succ\eta^{(q)}=\mu.
\end{equation*}
Such chains exist because the dominance intervals are finite.  The exact
telescoping identities below hold for every saturated chain; choosing a
maximum-length representative is used only for the separated-cover
normalization in Lemma~\ref{lem:post-turn-injection}.
For an upper-chain cell $e=1,\ldots,p$ define
\begin{equation*}
U_{e,r}
=
\Ical_r^{\nu^{(e-1)}}-\Ical_r^{\nu^{(e)}},
\qquad
u_e(z)=U_{e,1}(z),
\end{equation*}
and for a lower-chain cell $f=1,\ldots,q$ define
\begin{equation*}
L_{f,r}
=
\Ical_r^{\eta^{(f-1)}}-\Ical_r^{\eta^{(f)}},
\qquad
\ell_f(z)=L_{f,1}(z).
\end{equation*}
Telescoping gives the exact identities
\begin{equation*}\tag{9.1}\label{eq:cell-telescope}
A_r=\sum_{e=1}^pU_{e,r},
\qquad
B_r=\sum_{f=1}^qL_{f,r},
\qquad
C_r=\sum_eU_{e,r}-\sum_fL_{f,r}.
\end{equation*}
For one unit cover
\begin{equation*}
(a,b)\longmapsto(a-1,b+1),
\end{equation*}
the rank-one difference is
\begin{equation*}\tag{9.2}\label{eq:cell-onevar}
z^a+z^b-z^{a-1}-z^{b+1}
=
(z-1)^2\sum_{h=b}^{a-2}z^h.
\end{equation*}
Hence every cover cell carries a finite interval of permanently labeled
monomial heights.

For a rank-$k$ function $F$ define
\begin{equation*}
\mathsf T_zF(x_1,\ldots,x_k)
=
\sum_{j=1}^k
F(x_1,\ldots,x_jz,\ldots,x_k).
\end{equation*}
For an upper cell put
\begin{equation*}
\Phi_e^\gamma
=
S_\gamma(z)U_{e,k}
+\Ical_k^\gamma u_e(z)
-\mathsf T_zU_{e,k},
\end{equation*}
and for a lower cell put
\begin{equation*}
\Psi_f^\gamma
=
S_\gamma(z)L_{f,k}
+\Ical_k^\gamma \ell_f(z)
-\mathsf T_zL_{f,k}.
\end{equation*}
Substitution of \eqref{eq:cell-telescope} into
\eqref{eq:residual} gives
\begin{equation*}\tag{9.3}\label{eq:residual-cell}
\boxed{\begin{aligned}
\mathcal R_k
={}&
\sum_{e=1}^p\Phi_e^\gamma
-
\sum_{f=1}^q\Psi_f^\gamma\\
&+
\sum_{e,e'=1}^p
u_e(z)U_{e',k}
+
\sum_{f,f'=1}^q
\ell_f(z)L_{f',k}.
\end{aligned}}
\end{equation*}
This is an exact algebraic identity.  No sign is asserted for the
individual $\Phi_e^\gamma$ or $\Psi_f^\gamma$.

We next separate the unchanged exponent labels from the labels involved in
one local cover interaction.  Let $\tau$ denote such an interaction and let
$A_\tau\subset[n]$ be the set of exponent labels whose values change in
$\tau$.  A term of the injective sum uses only $k$ exponent labels, so the
spectator data consist not of all labels outside $A_\tau$, but of the
selected labels outside $A_\tau$ together with their physical positions.
More precisely, let
\begin{equation*}
R_\tau\subset[n]\setminus A_\tau
\end{equation*}
be the selected spectator labels occurring in one term, and let
\begin{equation*}
\chi:R_\tau\longrightarrow\{1,\ldots,k\}
\end{equation*}
record the distinct physical variable positions receiving those labels.
The datum $(\tau,R_\tau,\chi)$ will be called a \emph{spectator thread}.
Since no exponent carried by a label in $R_\tau$ changes during the local
interaction, every term on this thread contains the same positive monomial
factor
\begin{equation*}
 m_\chi(\mathbf x)
 =
 \prod_{r\in R_\tau}x_{\chi(r)}^{\alpha_r}>0,
\end{equation*}
where $\alpha_r$ is the unchanged exponent on label $r$.  After factoring
out $m_\chi$, the remaining expression contains only the active labels and
is expanded in the common ordered coefficient basis used in
Sections~\ref{sec:fixed-union}--\ref{sec:marked}.

Accordingly, after the terms in \eqref{eq:residual-cell} are partitioned by
their local interaction and spectator data, the residual has the exact form
\begin{equation*}
\mathcal R_k
=
\sum_\chi m_\chi(\mathbf x)\,\mathcal S_\chi,
\qquad
m_\chi(\mathbf x)>0.
\end{equation*}

The next lemma identifies the coefficient rows that will be compared
projectively.

\begin{lemma}[Physical positive-basis expansion]
\label{lem:physical-positive-basis}
Fix an ordered chamber and a spectator thread $\chi$.  For one local
rank-three realization order its active variables as
\begin{equation*}
a\ge b\ge c>0
\end{equation*}
and introduce the nonnegative normal coordinates
\begin{equation*}
 a=c+v+u,
 \qquad
 b=c+v,
 \qquad
 c>0,
 \qquad
 u,v\ge0.
\end{equation*}
If additional active variables occur on the same thread, order them as well
and denote their successive nonnegative gaps by
\begin{equation*}
w_1,\ldots,w_d\ge0.
\end{equation*}
The variables and positive factors that are not expanded in this local
coefficient extraction are collected into a boundary-parameter vector
$\mathbf b$.  In particular, boundary factors that will later be kept
pointwise, such as values of $C_k$ or $C_1$, are not expanded into scalar
coefficients.

Fix all exponents of
\begin{equation*}
c,
\quad v,
\quad w_1,\ldots,w_d
\end{equation*}
and collect them into
\begin{equation*}
\rho=(r_0,r_1,r_2,\ldots,r_{d+1})\in\mathbb Z_{\ge0}^{d+2}.
\end{equation*}
The remaining exponent of the distinguished normal coordinate $u$ is the
projective-column index $m$.  Define the physical basis function
\begin{equation*}\tag{9.4a}\label{eq:positive-physical-basis}
\boxed{
\Phi_{\chi,\rho,m}(\mathbf x,z)
=
c^{r_0}v^{r_1}
\prod_{j=1}^{d}w_j^{r_{j+1}}
 u^m.
}
\end{equation*}
Thus
\begin{equation*}
\Phi_{\chi,\rho,m}(\mathbf x,z)\ge0
\end{equation*}
on the ordered chamber.

There are coefficient functions
\begin{equation*}
p_{\chi,\rho,m}(\mathbf b),
\qquad
0\le m\le M_{\chi,\rho},
\end{equation*}
such that
\begin{equation*}\tag{9.4b}\label{eq:thread-positive-expansion}
\boxed{
\mathcal S_\chi
=
\sum_\rho
\sum_{m=0}^{M_{\chi,\rho}}
p_{\chi,\rho,m}(\mathbf b)\,
\Phi_{\chi,\rho,m}(\mathbf x,z).
}
\end{equation*}
For fixed $(\chi,\rho,\mathbf b)$, write
\begin{equation*}
P_{\chi,\rho}(\mathbf b)
=
\bigl(
 p_{\chi,\rho,0}(\mathbf b),\ldots,
 p_{\chi,\rho,M_{\chi,\rho}}(\mathbf b)
\bigr).
\end{equation*}
This is exactly the physical coefficient row obtained from the common
source by the Pascal/binomial transport of
Lemma~\ref{lem:positive-contraction}.  Consequently,
\begin{equation*}\tag{9.4c}\label{eq:residual-positive-expansion}
\boxed{
\mathcal R_k(\mathbf x,z)
=
\sum_{\chi,\rho}
m_\chi(\mathbf x)
\sum_{m=0}^{M_{\chi,\rho}}
p_{\chi,\rho,m}(\mathbf b)\,
\Phi_{\chi,\rho,m}(\mathbf x,z).
}
\end{equation*}
\end{lemma}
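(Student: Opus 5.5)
The statement to prove is Lemma~\ref{lem:physical-positive-basis}. It is essentially a change of variables followed by a polynomial expansion, so the plan is to make that expansion explicit and then identify its coefficient rows with the Pascal transport.

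\textbf{Step 1: the thread polynomial.} Fix the ordered chamber and a spectator thread $\chi$. By construction, $\mathcal S_\chi$ is the quotient of the $\chi$-part of \eqref{eq:residual-cell} by the positive monomial $m_\chi(\mathbf x)$. Hence $\mathcal S_\chi$ is a finite signed sum of monomials in the active variables and $z$, multiplied by boundary factors. Following the statement, I collect into $\mathbf b$ every factor that is kept pointwise: the values of $C_k$ and $C_1$, the inactive variables, and the remaining positive factors. What is left is a polynomial in the ordered active variables with coefficients depending on $\mathbf b$.

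\textbf{Step 2: substitute the normal coordinates and expand.} Order the active variables as $a\ge b\ge c$, followed by any further ordered ones. Substitute $a=c+v+u$, $b=c+v$, and write each further ordered active variable as $c$ plus a nonnegative combination of the gaps $w_j$, according to its place in the ordering. This is a unimodular triangular change of variables with nonnegative integer entries, so every monomial becomes a polynomial in $(c,v,w_1,\ldots,w_d,u)$. That polynomial is obtained by repeated binomial expansion, for example
\begin{equation*}
a^{e}=\sum_{m}\binom{e}{m}u^m(c+v)^{e-m}.
\end{equation*}
Collecting terms by the exponent vector $\rho$ of $(c,v,w)$ and the exponent $m$ of $u$ gives \eqref{eq:thread-positive-expansion}. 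The coefficient $p_{\chi,\rho,m}(\mathbf b)$ is a finite signed sum of products of binomial coefficients and $\mathbf b$-factors, and $M_{\chi,\rho}$ is bounded by the total degree. Each basis function $\Phi_{\chi,\rho,m}$ is a monomial in nonnegative quantities, so it is nonnegative on the chamber. Multiplying the expansion by $m_\chi$ and summing over $\chi$ gives \eqref{eq:residual-positive-expansion}.

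\textbf{Step 3: identify the rows with the Pascal transport.} It remains to show that the row $P_{\chi,\rho}(\mathbf b)$ is exactly the image of the common source under the transport of Lemma~\ref{lem:positive-contraction}. The plan is to fix $\rho$ and track only the exponent $p$ of $a$ in the common gauge, that is, the gauge in which monomials are recorded in the variable $a$ before the split $a=(c+v)+u$. The extraction of $u^m$ from $a^p$ is $\binom{p}{m}(c+v)^{p-m}$. After the further splitting of $(c+v)^{p-m}$ into $c$- and $v$-powers, the part landing in a fixed $\rho$ carries only factors that depend on $(p,m,\rho)$ through the combination $p-m$ and the fixed $\rho$. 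Therefore
\begin{equation*}
P_{\chi,\rho}=(\text{source row})\cdot\Pi,\qquad \pi_{pm}=\binom pm,
\end{equation*}
up to positive row and column scalings by fixed binomials. This matches the kernel form $\Pi_{\mathfrak F}$ displayed in Section~\ref{sec:marked}.

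\textbf{Main obstacle.} Step 3 is where I expect the difficulty. The distinguished coordinate $u$ appears only through $a$, but the other active variables also feed into $c$, $v$ and the $w_j$. So one must check that fixing $\rho$ leaves a single transport fiber, in which the $m$-dependence is purely Pascal, rather than a mixture of several kernels. My approach is to refine $\rho$ further by the exponents originating from $b$ and the other active variables, as in the fiber refinement of Theorem~\ref{thm:fixed-union}. I would then verify that within each refined fiber the extra factors are independent of $m$, and that summing the refined fibers is done only after the sign arguments are completed.
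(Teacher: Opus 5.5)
Your proposal is essentially the paper's proof. You substitute the ordered normal coordinates, expand the distinguished coordinate $u$ binomially and the auxiliary coordinates $c,v,w_j$ multinomially while leaving $\mathbf b$ unexpanded, and collect to obtain $p_{\chi,\rho,m}=\sum_p q_{\chi,p}\,\Pi_{\chi,\rho}(p,m;\mathbf b)$ with a nonnegative Pascal-type kernel, then sum over threads; the fiber-mixing issue you flag is exactly what the paper disposes of by fixing ``all indices not involved in this row.''

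One sharpening for Step~3: a factor depending only on $p-m$ is Toeplitz rather than a row/column scaling. What actually justifies your ``therefore'' is the explicit kernel the paper uses (with $T_0=B(\mathbf b)+L(c,v,\mathbf w)$), namely $\binom pm\binom{p-m}{r}B^{p-m-r}\kappa_{\chi,\rho}=\kappa_{\chi,\rho}\binom pr\binom{p-r}{m}B^{p-r-m}$, which is a scaled shifted Pascal matrix.
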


\begin{proof}
The construction is a finite coefficient extraction in nonnegative normal
coordinates.  After the spectator factor $m_\chi$ has been removed, every
active physical monomial is first written in the ordered coordinates above.
For example, a shifted active power has the form
\begin{equation*}
\bigl(T_0(\mathbf b,c,v,\mathbf w)+u\bigr)^p,
\end{equation*}
where $T_0$ is a sum of retained positive boundary variables and
nonnegative normal coordinates.  Expanding only the distinguished normal
coordinate gives
\begin{equation*}\tag{9.4d}\label{eq:pascal-u-expansion}
\bigl(T_0+u\bigr)^p
=
\sum_{m=0}^{p}
\binom pm T_0^{p-m}u^m.
\end{equation*}
The coefficients $\binom pm$ are nonnegative.  Next expand the powers
$T_0^{p-m}$ in the auxiliary normal coordinates
$c,v,w_1,\ldots,w_d$, while leaving the boundary parameters $\mathbf b$
untouched.  The multinomial theorem gives an exact finite expansion
\begin{equation*}\tag{9.4e}\label{eq:physical-transport-expanded}
\bigl(T_0+u\bigr)^p
=
\sum_{\rho,m}
\Pi_{\chi,\rho}(p,m;\mathbf b)\,
\Phi_{\chi,\rho,m}(\mathbf x,z),
\end{equation*}
where
\begin{equation*}
\Pi_{\chi,\rho}(p,m;\mathbf b)\ge0
\end{equation*}
pointwise on the ordered chamber.  Each
$\Pi_{\chi,\rho}(p,m;\mathbf b)$ is a product of a binomial coefficient,
multinomial coefficients, and nonnegative powers of retained positive
boundary quantities.

We call the exponent $p$ in a pre-expansion term $(T_0+u)^p$ its
\emph{common-source position}.  Let $q_{\chi,p}(\mathbf b)$ denote the
signed coefficient of common-source position $p$ after all indices not
involved in this row have been fixed.  Collecting equal basis monomials in
\eqref{eq:physical-transport-expanded} gives the exact coefficient formula
\begin{equation*}\tag{9.4f}\label{eq:p-physical-coefficient}
\boxed{
p_{\chi,\rho,m}(\mathbf b)
=
\sum_p
q_{\chi,p}(\mathbf b)\,
\Pi_{\chi,\rho}(p,m;\mathbf b).
}
\end{equation*}
Thus the passage from the common source row to the physical row is the
Pascal matrix
\begin{equation*}
\pi_{pm}=\binom pm
\end{equation*}
followed by positive diagonal factors and the specific Pascal/multinomial
coefficient transports coming from the auxiliary expansions; these transports
are totally nonnegative in the ordered coefficient indices.  This is precisely the transport
whose adjacent $2\times2$ minors are handled by
Lemma~\ref{lem:positive-contraction}.

Equation~\eqref{eq:thread-positive-expansion} is now simply the collection
of all physical monomials with the same fixed spectator thread.  Summing
the exact thread decomposition
\begin{equation*}
\mathcal R_k
=
\sum_\chi m_\chi\mathcal S_\chi
\end{equation*}
over $\chi$ gives
\eqref{eq:residual-positive-expansion}.
\end{proof}

The three factors in \eqref{eq:residual-positive-expansion} now have
separate roles.  The spectator factor satisfies
\begin{equation*}
m_\chi(\mathbf x)>0,
\end{equation*}
and the physical basis function satisfies
\begin{equation*}
\Phi_{\chi,\rho,m}(\mathbf x,z)\ge0.
\end{equation*}
The only nontrivial sign is therefore the coefficient function
\begin{equation*}
p_{\chi,\rho,m}(\mathbf b).
\end{equation*}
The projective-ordering argument below is applied pointwise in the retained
boundary parameters $\mathbf b$.  If every coefficient function in every
row is nonnegative, then
\eqref{eq:residual-positive-expansion} implies
\begin{equation*}
\mathcal R_k(\mathbf x,z)\ge0.
\end{equation*}

For a fixed thread, write
\begin{equation*}
\mathcal S_\chi
=
\mathcal D_\chi+\mathcal G_\chi,
\end{equation*}
where the two pieces are obtained directly from the two lines of
\eqref{eq:residual-cell}:
\begin{equation*}
\mathcal D_\chi
=
\left[
\sum_{e=1}^p\Phi_e^\gamma
-
\sum_{f=1}^q\Psi_f^\gamma
\right]_\chi,
\end{equation*}
\begin{equation*}
\mathcal G_\chi
=
\left[
\sum_{e,e'=1}^p u_e(z)U_{e',k}
+
\sum_{f,f'=1}^q\ell_f(z)L_{f',k}
\right]_\chi.
\end{equation*}
Here $[\,\cdot\,]_\chi$ means: retain the terms with the fixed spectator
data $\chi$, remove the common factor $m_\chi$, and write the remaining
active expression in the common ordered coefficient basis.  By
\eqref{eq:cell-telescope}, the second display is exactly the threadwise
part of
\begin{equation*}
A_1(z)A_k(\mathbf x)+B_1(z)B_k(\mathbf x)
\end{equation*}
from the original residual \eqref{eq:residual}.

The signed term can also be written directly in terms of the original
rank defects.  Indeed, by \eqref{eq:cell-telescope},
\begin{equation*}\tag{9.4}\label{eq:Dchi-original}
\boxed{
\mathcal D_\chi
=
\left[
S_\gamma(z)C_k(\mathbf x)
+\Ical_k^\gamma(\mathbf x)C_1(z)
-\sum_{j=1}^k C_k(x_1,\ldots,x_jz,\ldots,x_k)
\right]_\chi .
}
\end{equation*}
The next step is not a further identity for the raw polynomial
$\mathcal D_\chi$.  The local source decomposition becomes exact after one
passes to the common source representing an adjacent physical projective
bracket.  We now make that passage explicit.

Fix a spectator thread $\chi$, fix one of the remaining coefficient
multi-indices $\rho$ in \eqref{eq:thread-positive-expansion}, and fix an
adjacent pair of physical projective columns $m,m+1$.  Put
\begin{equation*}
P_{\chi,\rho}
=
(p_{\chi,\rho,0},\ldots,p_{\chi,\rho,M}),
\qquad
H_{\chi,\rho}
=
(h_{\chi,\rho,0},\ldots,h_{\chi,\rho,M}),
\end{equation*}
where $M=M_{\chi,\rho}$.  The first row is exactly the coefficient row in
\eqref{eq:thread-positive-expansion}; the second is the positive reference
row produced by the unmarked common source in the same physical basis.
For a row $F=(f_0,\ldots,f_M)$ define its adjacent bracket with
$H_{\chi,\rho}$ by
\begin{equation*}\tag{9.5}\label{eq:thread-bracket}
\mathfrak B_{\chi,\rho,m}(F)
=
f_mh_{\chi,\rho,m+1}-f_{m+1}h_{\chi,\rho,m}.
\end{equation*}
Because $H_{\chi,\rho}$ is fixed,
$\mathfrak B_{\chi,\rho,m}$ is linear in $F$.

Sections~\ref{sec:fixed-union} and~\ref{sec:marked} give a canonical
common-source representative for this bracket before the final positive
Pascal/Cauchy--Binet contraction.  Denote by
\begin{equation*}
\mathcal D^{\mathrm{src}}_{\chi,\rho,m}
\end{equation*}
the common-source representative of the contribution of
$\mathcal D_\chi$, and by
\begin{equation*}
\mathcal G^{\mathrm{src}}_{\chi,\rho,m}
\end{equation*}
the representative of the contribution of $\mathcal G_\chi$.
Thus the total source for this thread and adjacent column pair is
\begin{equation*}\tag{9.6}\label{eq:thread-source-split}
\mathcal S^{\mathrm{src}}_{\chi,\rho,m}
=
\mathcal D^{\mathrm{src}}_{\chi,\rho,m}
+
\mathcal G^{\mathrm{src}}_{\chi,\rho,m}.
\end{equation*}
The positive contraction $\Lambda_{\chi,\rho,m}$ of
Lemma~\ref{lem:positive-contraction} satisfies
\begin{equation*}\tag{9.7}\label{eq:source-to-bracket}
\Lambda_{\chi,\rho,m}
\bigl(\mathcal S^{\mathrm{src}}_{\chi,\rho,m}\bigr)
=
\mathfrak B_{\chi,\rho,m}(P_{\chi,\rho}).
\end{equation*}

We next write $\mathcal D^{\mathrm{src}}_{\chi,\rho,m}$ explicitly.  In the
adjacent subset-rank expansion of Section~\ref{sec:fixed-union}, let
$P_S,Q_S$ denote the two subset-indexed source families entering the
bracket.  Set
\begin{equation*}
P_k=\sum_{|S|=k}P_S,
\qquad
Q_k=\sum_{|S|=k}Q_S.
\end{equation*}
For $|S|=k$ and $|T|=k+1$, write
\begin{equation*}
C=S\cap T,
\qquad
U=S\triangle T.
\end{equation*}
If $|C|=k-r$, then $|U|=2r+1$ and uniquely
\begin{equation*}
S=C\cup A,
\qquad
T=C\cup(U\setminus A),
\qquad
|A|=r.
\end{equation*}
Hence the exact adjacent-rank reindexing gives
\begin{equation*}\tag{9.8}\label{eq:D-source-fixed-union}
\boxed{
\mathcal D^{\mathrm{src}}_{\chi,\rho,m}
=
\sum_{r=0}^k
\sum_{\substack{C\cap U=\varnothing\\|C|=k-r,\ |U|=2r+1}}
\widehat{\mathcal B}_{\chi,\rho,m;C,U},
}
\end{equation*}
where
\begin{equation*}\tag{9.9}\label{eq:thread-fixed-block}
\widehat{\mathcal B}_{\chi,\rho,m;C,U}
=
\sum_{\substack{A\subset U\\|A|=r}}
\left(
P_{C\cup A}Q_{C\cup(U\setminus A)}
-
P_{C\cup(U\setminus A)}Q_{C\cup A}
\right),
\end{equation*}
after the common positive rank and spectator factors have been removed.
Every term in the adjacent bracket occurs in exactly one block
\eqref{eq:thread-fixed-block}.

The fixed-union exchange theorem of
Section~\ref{sec:fixed-union} straightens each complete fixed-$(C,U)$ block
subtraction-free into terminal rank-three source blocks.  Let
\begin{equation*}
\mathfrak J_{\chi,\rho,m}
\end{equation*}
be the finite set of all such terminal blocks produced from all
$(C,U)$ in \eqref{eq:D-source-fixed-union}.  The straightening gives the
exact source identity
\begin{equation*}\tag{9.10}\label{eq:D-source-terminal}
\boxed{
\mathcal D^{\mathrm{src}}_{\chi,\rho,m}
=
\mathcal U_{\chi,\rho,m}
+
\mathcal H_{\chi,\rho,m}
+
\sum_{\tau\in\mathfrak J_{\chi,\rho,m}}
\mathcal Q_{\chi,\rho,m;\tau}.
}
\end{equation*}
Here $\mathcal U_{\chi,\rho,m}$ is the unmatched endpoint source and
$\mathcal H_{\chi,\rho,m}$ is the second-and-higher normal remainder left after
the wall and first-normal terms have been extracted.

The set $\mathfrak J_{\chi,\rho,m}$ has a disjoint and exhaustive partition
\begin{equation*}\tag{9.11}\label{eq:J-partition}
\mathfrak J_{\chi,\rho,m}
=
\mathfrak J^A_{\chi,\rho,m}
\sqcup
\mathfrak J^C_{\chi,\rho,m}
\sqcup
\mathfrak J^Z_{\chi,\rho,m}
\sqcup
\mathfrak J^P_{\chi,\rho,m},
\end{equation*}
according to the supports and orientations of the two labeled covers in
the terminal block:
\begin{equation*}
\begin{array}{c|c}
\mathfrak J^A_{\chi,\rho,m}
&\text{same-role shared-label interaction},\\
\mathfrak J^C_{\chi,\rho,m}
&\text{separated commuting interaction},\\
\mathfrak J^Z_{\chi,\rho,m}
&\text{nested equal-pair degeneration},\\
\mathfrak J^P_{\chi,\rho,m}
&\text{donor/receiver role switch}.
\end{array}
\end{equation*}
Define
\begin{equation*}
\mathcal A_{\chi,\rho,m}
=
\sum_{\tau\in\mathfrak J^A_{\chi,\rho,m}}
\mathcal Q_{\chi,\rho,m;\tau},
\qquad
\mathcal C_{\chi,\rho,m}
=
\sum_{\tau\in\mathfrak J^C_{\chi,\rho,m}}
\mathcal Q_{\chi,\rho,m;\tau},
\end{equation*}
\begin{equation*}
\mathcal Z_{\chi,\rho,m}
=
\sum_{\tau\in\mathfrak J^Z_{\chi,\rho,m}}
\mathcal Q_{\chi,\rho,m;\tau}.
\end{equation*}
For a role-switch block the canonical source orientation is negative, so
write
\begin{equation*}
\mathcal Q_{\chi,\rho,m;\tau}
=-\mathcal P_{\chi,\rho,m;\tau}
\qquad
(\tau\in\mathfrak J^P_{\chi,\rho,m})
\end{equation*}
and define
\begin{equation*}
\mathcal P_{\chi,\rho,m}
=
\sum_{\tau\in\mathfrak J^P_{\chi,\rho,m}}
\mathcal P_{\chi,\rho,m;\tau}.
\end{equation*}
Substitution of the partition \eqref{eq:J-partition} into
\eqref{eq:D-source-terminal} gives the exact bracket-source decomposition
\begin{equation*}\tag{9.12}\label{eq:source-decomposition}
\boxed{
\mathcal D^{\mathrm{src}}_{\chi,\rho,m}
=
\mathcal A_{\chi,\rho,m}
+
\mathcal C_{\chi,\rho,m}
+
\mathcal Z_{\chi,\rho,m}
+
\mathcal H_{\chi,\rho,m}
+
\mathcal U_{\chi,\rho,m}
-
\mathcal P_{\chi,\rho,m}.
}
\end{equation*}
This is the level at which the local source classification is used.
No corresponding termwise decomposition of the raw polynomial
$\mathcal D_\chi$ is asserted.

We now prove directly the local three-label statement needed for the
same-role class.  This is an elementary $A_2$ rhombus calculation and uses no
three-variable collision theorem.

\begin{proposition}[Self-contained same-role three-label rhombus theorem]
\label{prop:three-label-input}
Let $A\ge B\ge C\ge0$ be integers.  For the same-donor orientation assume
$A-B\ge3$, which is exactly the condition that both successive unit transfers
from the permanently labeled $A$-part are legal while the exponent order is
retained.  For the same-receiver orientation assume $B-C\ge3$, the analogous
condition that two successive units can enter the permanently labeled $C$-part.
Define the same-donor rhombus
\begin{equation*}
\begin{aligned}
\mathscr D_{A,B,C}
={}&J^{(3)}_{(A,B,C)}
-J^{(3)}_{(A-1,B+1,C)}
-J^{(3)}_{(A-1,B,C+1)}\\
&+J^{(3)}_{(A-2,B+1,C+1)},
\end{aligned}
\end{equation*}
and the same-receiver rhombus
\begin{equation*}
\begin{aligned}
\mathscr R_{A,B,C}
={}&J^{(3)}_{(A,B,C)}
-J^{(3)}_{(A-1,B,C+1)}
-J^{(3)}_{(A,B-1,C+1)}\\
&+J^{(3)}_{(A-1,B-1,C+2)}.
\end{aligned}
\end{equation*}
On the ordered chamber
\begin{equation*}
 x=\zeta+v+u,\qquad y=\zeta+v,\qquad z=\zeta,
 \qquad u,v,\zeta\ge0,
\end{equation*}
both rhombi belong to $\R_{\ge0}[u,v,\zeta]$.  Consequently every complete
same-role shared-label packet, after the common positive spectator and
transport factors are restored, is coefficientwise nonnegative in the
ordered-gap basis.
\end{proposition}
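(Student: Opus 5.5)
The plan is to prove both rhombus inequalities by an explicit permutation-wise factorization, followed by a single Schur-type pairing in each case. No collision theorem is needed. The labeled orbit sum $J^{(3)}$ is a sum over the six bijections from the three permanent exponent labels to the variables $x,y,z$. For a fixed bijection, write $p,q,r$ for the variables receiving the $A$-, $B$-, $C$-labels. The four terms of $\mathscr D_{A,B,C}$ then collapse into a single product:
\begin{equation*}
p^Aq^Br^C-p^{A-1}q^{B+1}r^C-p^{A-1}q^Br^{C+1}+p^{A-2}q^{B+1}r^{C+1}
=p^{A-2}q^Br^C(p-q)(p-r).
\end{equation*}
In the same way, the four terms of $\mathscr R_{A,B,C}$ collapse into
\begin{equation*}
p^{A-1}q^{B-1}r^C(p-r)(q-r).
\end{equation*}
So each rhombus is a Schur-type sum over the six bijections. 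Crucially, all four orbit sums share the same permanent labels, so these cancellations are exact.

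Next I group the six bijections in $\mathscr D$ by which variable carries the donor label $A$. Put $S(a,b)=a^Bb^C+a^Cb^B$, a polynomial with nonnegative coefficients in its arguments, so also in $u,v,\zeta$. In the chamber, $x-y=u$, $y-z=v$ and $x-z=u+v$.
\begin{itemize}
\item Donor at $z$: the sign factor is $(z-x)(z-y)=(u+v)v\ge_{\mathrm{coeff}}0$, so this group is harmless.
\item Donor at $x$: the factor is $(x-y)(x-z)=u(u+v)$, also harmless.
\item Donor at $y$: the factor is $(y-x)(y-z)=-uv$. This is the only negative group.
\end{itemize}
I therefore pair the donor-$x$ and donor-$y$ groups and factor out $x-y=u$. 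Splitting $x-z=u+(y-z)$, their sum becomes
\begin{equation*}
u\Bigl[x^{A-2}S(y,z)\,u+v\bigl(x^{A-2}S(y,z)-y^{A-2}S(x,z)\bigr)\Bigr].
\end{equation*}
The inner difference splits as
\begin{equation*}
(xy)^Bz^C\bigl(x^{A-2-B}-y^{A-2-B}\bigr)+(xy)^Cz^B\bigl(x^{A-2-C}-y^{A-2-C}\bigr).
\end{equation*}
The hypothesis $A-B\ge3$ (hence also $A-C\ge3$) makes both exponents nonnegative. Substituting $x=y+u$ and expanding by the binomial theorem, each $x^s-y^s$ lies in $\R_{\ge0}[u,y]$, and $y=\zeta+v$. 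Hence the paired group lies in $\R_{\ge0}[u,v,\zeta]$.

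For $\mathscr R$ I group by the receiver variable $r$.
\begin{itemize}
\item $r=z$: the factor is $(x-z)(y-z)=(u+v)v\ge_{\mathrm{coeff}}0$.
\item $r=x$: the factor is $(y-x)(z-x)=u(u+v)\ge_{\mathrm{coeff}}0$.
\item $r=y$: the factor is $(x-y)(z-y)=-uv$, the only negative group.
\end{itemize}
I pair the $r=y$ group with the $r=z$ group and factor out $y-z=v$. Writing $x-z=(x-y)+(y-z)$, the bracket becomes a positive term plus $u$ times a difference of the form $T(x,y)\bigl(y^{C}z^{B-1}\cdots\bigr)$. That difference factors as $(yz)^{\ast}\,(y^{s}-z^{s})$-type terms with exponents $B-1-C$ and its companions. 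These are nonnegative exactly because $B-C\ge3$, and each $y^s-z^s$ lies in $\R_{\ge0}[v,\zeta]$ via $y=z+v$. I expect this receiver pairing to be the main obstacle. The exponent bookkeeping is less symmetric than in the donor case, because the roles of $p$ and $q$ in $(p-r)(q-r)$ differ by the shift $B-1$ versus $A-1$. I would first try the mirror substitution $(x,y,z)\mapsto(1/z,1/y,1/x)$ to reduce it to the donor computation. If that fails to preserve the ordered-gap basis, I would redo the pairing directly as sketched above.

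Finally, the packet statement follows by linearity. A complete same-role shared-label packet is a nonnegative combination of such rhombi, multiplied by positive spectator monomials $m_\chi$. It is then transported by the Pascal/multinomial expansions of Lemma~\ref{lem:physical-positive-basis}, whose coefficients are all nonnegative. Coefficientwise nonnegativity in the ordered-gap basis is preserved under products and sums of nonnegative series, so the packet claim follows.
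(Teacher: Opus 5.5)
Your proof is correct, and it takes a genuinely different route from the paper.

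The paper writes $\mathscr D_{A,B,C}=w^C(p_dS_\rho-S_{\rho+d})$ in cyclic form. It then uses the bialternant identity $\mathscr D_{A,B,C}-\mathscr D_{A,B-1,C+1}=\Delta^2 s_{(A-4,B-2,C)}$ and Schur positivity to reduce to the bases $d=B-C\in\{0,1\}$, which it expands by hand in the gap variables. The receiver case is handled the same way: it inducts on $A-B$, with hand-expanded bases built from $T_r$ and $F_s$.

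You instead factor each bijection separately and group by the variable carrying the donor (receiver) label. You observe that only the middle variable $y$ produces the negative factor $-uv$, and you absorb it into one neighbouring group by splitting $x-z=u+v$. This is the same mechanism as the paper's $d=0$ base case (the $uv(x^r-y^r+z^r)$ term in \eqref{eq:S-gap-positive}), but applied uniformly.

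What your route buys:
\begin{itemize}
\item no induction and no Schur polynomials;
\item an explicit closed-form certificate;
\item weaker hypotheses: your argument only uses $A-B\ge2$ in the donor case and $B-C\ge1$ in the receiver case.
\end{itemize}
What the paper's route buys is the structural fact that neighbouring rhombi differ by $\Delta^2$ times a Schur polynomial. That fact is not needed for the sign.

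Your receiver step is only sketched, and $T$ is never defined, so write it out. With $T(a,b)=a^{A-1}b^{B-1}+a^{B-1}b^{A-1}$, pairing the $r=y$ and $r=z$ groups and adding the $r=x$ group gives
\begin{equation*}
\begin{aligned}
\mathscr R_{A,B,C}={}&u(u+v)\,x^C\,T(y,z)+v^2z^C\,T(x,y)\\
&+uv\,(yz)^C\bigl[x^{A-1}\bigl(y^{B-1-C}-z^{B-1-C}\bigr)+x^{B-1}\bigl(y^{A-1-C}-z^{A-1-C}\bigr)\bigr].
\end{aligned}
\end{equation*}
This lies in $\R_{\ge0}[u,v,\zeta]$ because $y=\zeta+v$ and $z=\zeta$.

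So the mirror substitution is unnecessary. It would not have worked directly anyway: inversion reverses the chamber order and turns the gaps into rational functions.
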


\begin{proof}
We treat the donor and receiver orientations separately.  Put
\begin{equation*}
 w=xyz,
 \qquad
 \Delta=(x-y)(x-z)(y-z),
\end{equation*}
and, for $r\ge0$,
\begin{equation*}
 S_r=\sum_{\rm cyc}x^r(x-y)(x-z).
\end{equation*}
For the donor rhombus, with
\begin{equation*}
 d=B-C,
 \qquad
 \rho=A-C-2,
\end{equation*}
a direct expansion gives
\begin{equation*}\tag{9.13a}\label{eq:donor-cyclic}
\mathscr D_{A,B,C}
=w^C\bigl(p_dS_\rho-S_{\rho+d}\bigr),
\qquad p_d=x^d+y^d+z^d.
\end{equation*}
If $d\ge2$, the neighboring-rhombus difference is the bialternant identity
\begin{equation*}\tag{9.13b}\label{eq:donor-equalization}
\boxed{
\mathscr D_{A,B,C}-\mathscr D_{A,B-1,C+1}
=\Delta^2 s_{(A-4,B-2,C)}(x,y,z).
}
\end{equation*}
The partition on the right is legal because the two-step same-donor packet
has $A-B\ge3$, while the equalization step assumes $B-C\ge2$.  Since Schur
polynomials have nonnegative monomial coefficients, the right side of
\eqref{eq:donor-equalization} is coefficientwise nonnegative after the
ordered-gap substitution.  Thus it remains only to check $d=0,1$.

In the ordered-gap variables one has
\begin{equation*}\tag{9.13c}\label{eq:S-gap-positive}
S_r
=u^2x^r
+uv\bigl(x^r-y^r+z^r\bigr)
+v^2z^r
\in\R_{\ge0}[u,v,\zeta].
\end{equation*}
Hence the $d=0$ base
\begin{equation*}
\mathscr D_{A,C,C}=2w^C S_{A-C-2}
\end{equation*}
is coefficientwise nonnegative.  For $d=1$, put
$e_1=x+y+z$.  Then
\begin{equation*}
\mathscr D_{A,C+1,C}
=w^C\bigl(e_1S_\rho-S_{\rho+1}\bigr),
\end{equation*}
and
\begin{equation*}\tag{9.13d}\label{eq:first-offdiag-gap}
 e_1S_r-S_{r+1}
 =u^2x^r(y+z)+uvM_r+v^2z^r(x+y),
\end{equation*}
where $M_0=2y$, while for $r\ge1$
\begin{equation*}
\begin{aligned}
M_r
={}&2yz^r
+u\left((y+z)\sum_{j=0}^{r-1}x^{r-1-j}y^j-y^r+z^r\right).
\end{aligned}
\end{equation*}
The expression in parentheses is coefficientwise nonnegative, because
\begin{equation*}
\sum_{j=0}^{r-1}x^{r-1-j}y^j-r y^{r-1}
\in\R_{\ge0}[u,y]
\end{equation*}
and therefore
\begin{equation*}
\begin{aligned}
&(y+z)\sum_{j=0}^{r-1}x^{r-1-j}y^j-y^r+z^r\\
&\quad=(y+z)
\left(\sum_{j=0}^{r-1}x^{r-1-j}y^j-r y^{r-1}\right)
 +(r-1)y^r+rzy^{r-1}+z^r.
\end{aligned}
\end{equation*}
This proves the donor orientation.

For the receiver orientation, the same bialternant calculation, equivalently
applied to complementary exponent vectors and then cleared of the common
monomial, gives the upper-gap equalization
\begin{equation*}\tag{9.13e}\label{eq:receiver-equalization}
\boxed{
\mathscr R_{A,B,C}-\mathscr R_{A-1,B+1,C}
=\Delta^2 s_{(A-4,B-2,C)}(x,y,z)
}
\end{equation*}
whenever $A-B\ge2$.  Thus only the bases $A-B=0,1$ remain.  Define
\begin{equation*}
T_r=\sum_{\rm cyc}(yz)^r(x-y)(x-z).
\end{equation*}
A direct ordered-gap expansion is
\begin{equation*}\tag{9.13f}\label{eq:T-gap-positive}
T_r
=u^2(yz)^r
+uv\bigl((xy)^r-(xz)^r+(yz)^r\bigr)
+v^2(xy)^r,
\end{equation*}
which is coefficientwise nonnegative because
$(xy)^r-(xz)^r=x^r(y^r-z^r)$.  Hence
\begin{equation*}
\mathscr R_{A,A,C}=2w^C T_{A-C-1}
\end{equation*}
is nonnegative coefficientwise.

For $A=B+1$, write $s=B-C-1$.  Under the two-step same-receiver
legality hypothesis $B-C\ge3$ one has $s\ge2$ (the displayed algebra in fact
remains valid already for $s\ge1$).  Direct expansion gives
\begin{equation*}
\mathscr R_{B+1,B,C}=w^C F_s,
\end{equation*}
with
\begin{equation*}\tag{9.13g}\label{eq:receiver-base-one}
F_s
=u^2(yz)^s(y+z)+uvN_s+v^2(xy)^s(x+y),
\end{equation*}
where
\begin{equation*}
\begin{aligned}
N_s
&=(xy)^s(x+y)-(xz)^s(x+z)+(yz)^s(y+z)\\
&=x^s\bigl(x(y^s-z^s)+(y^{s+1}-z^{s+1})\bigr)
 +(yz)^s(y+z).
\end{aligned}
\end{equation*}
Every term is coefficientwise nonnegative in $u,v,\zeta$.  This closes the
receiver orientation and hence both same-role packets.  Reattaching frozen
spectators only multiplies these local polynomials by positive monomials and
sums them with nonnegative incidence and transport coefficients, so the sign
is preserved.
\end{proof}

The first three local classes now have their signs.  Proposition
\ref{prop:three-label-input} gives
\begin{equation*}\tag{9.13}\label{eq:Apositive}
\mathcal A_{\chi,\rho,m}\ge0.
\end{equation*}
Separated covers commute and give a positive commuting source, so
\begin{equation*}\tag{9.14}\label{eq:Cpositive}
\mathcal C_{\chi,\rho,m}\ge0.
\end{equation*}
For an equal-pair degeneration,
\begin{equation*}\tag{9.15}\label{eq:Zzero}
\mathcal Z_{\chi,\rho,m}=0.
\end{equation*}

It remains to compare the role-switch source with the attached Green
source.  For one labeled pair of cover-cell heights $\ell<h$, set
\begin{equation*}
d=h-\ell.
\end{equation*}
The normalized Green term is
\begin{equation*}\tag{9.16}\label{eq:green-source-nine}
\mathcal G_{\ell,h}(X,Y)
=
\frac{(X^h-X^\ell)(Y^\ell-Y^h)}
{(X-1)(1-Y)}
=
(XY)^\ell A_d(X)A_d(Y),
\end{equation*}
where
\begin{equation*}
A_d(T)=1+T+\cdots+T^{d-1}.
\end{equation*}
The corresponding symmetric pair-first source is
\begin{equation*}\tag{9.17}\label{eq:pair-source-nine}
\mathcal P_{\ell,h}(X,Y)
=
\frac{X^hY^\ell-X^\ell Y^h}{X-Y}
=
(XY)^\ell
\sum_{q=0}^{d-1}X^{d-1-q}Y^q.
\end{equation*}
The second polynomial is one anti-diagonal of the full $d\times d$
coefficient rectangle in the first.  Therefore
\begin{equation*}\tag{9.18}\label{eq:green-minus-pair}
\boxed{
\mathcal G_{\ell,h}-\mathcal P_{\ell,h}
=
(XY)^\ell
\sum_{\substack{0\le i,j\le d-1\\i+j\ne d-1}}
X^iY^j
\ge_{\mathrm{coeff}}0.
}
\end{equation*}
Distinct role switches use disjoint sets of permanently labeled unordered
cover-cell pairs.  Hence no Green pair is used twice.  Allocating the
corresponding part of the positive product source to the fixed thread and
adjacent column pair, and summing \eqref{eq:green-minus-pair}, gives
\begin{equation*}\tag{9.19}\label{eq:thread-green}
\boxed{
\mathcal G^{\mathrm{src}}_{\chi,\rho,m}-\mathcal P_{\chi,\rho,m}
\ge_{\mathrm{coeff}}0.
}
\end{equation*}

We now record the only remaining local-normal term.  For $r\ge0$ put
\begin{equation*}
h_r(a,b)=\sum_{j=0}^{r}a^{r-j}b^j.
\end{equation*}
If $\{a,b\}$ is one of the three unordered pairs from $\{x,y,z\}$, let
$s_{a,b}$ denote the remaining variable.  For $m\ge2$ and $S\ge1$ define
\begin{equation*}\tag{9.20}\label{eq:weighted-corner}
\begin{aligned}
\mathcal C^{(3)}_{m,S}(x,y,z)
={}&
\sum_{\{a,b\}\subset\{x,y,z\}}
(ab)^{S-1}(a-b)^2\\
&\quad\times
\left((m-1)h_{m-1}(a,b)
-ms_{a,b}h_{m-2}(a,b)\right).
\end{aligned}
\end{equation*}

\begin{lemma}[Positive higher-normal remainder]\label{lem:higher-normal}
For $m\ge2$ and $S\ge1$,
\begin{equation*}\tag{9.21}\label{eq:higher-normal}
\mathcal C^{(3)}_{m,S}(c+v+u,c+v,c)
=
B_{m,S}(v,c)
+\frac{u}{2}\partial_vB_{m,S}(v,c)
+u^2H_{m,S}(u,v,c),
\end{equation*}
where
\begin{equation*}
H_{m,S}\in\R_{\ge0}[u,v,c].
\end{equation*}
Here
\begin{equation*}
B_{m,S}(v,c)
=
-2[c(c+v)]^{S-1}v^3L_m(c+v,c)
\end{equation*}
and
\begin{equation*}
L_m(B,C)
=
\sum_{r=0}^{m-2}(r+1)B^{m-2-r}C^r.
\end{equation*}
The same conclusion holds after adjoining arbitrary spectator variables by
the fixed-variable branching identity.
\end{lemma}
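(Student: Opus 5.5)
The approach is to substitute the ordered chamber coordinates $x=c+v+u$, $y=c+v$, $z=c$ into \eqref{eq:weighted-corner}, isolate the constant and linear terms in $u$ exactly, and then show that what remains is $u^2$ times a polynomial with nonnegative coefficients. Throughout write
\begin{equation*}
E_m(a,b;s)=(m-1)h_{m-1}(a,b)-m\,s\,h_{m-2}(a,b).
\end{equation*}

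\textbf{Step 1: the pair $\{x,y\}$ and a symmetric bulk.} For the pair $\{x,y\}$ one has $(x-y)^2=u^2$, so this summand already lies in $u^2\R[u,v,c]$. Call the sum of the other two summands $G(x,y)$. In $G$, the $\{x,z\}$ summand has $s_{x,z}=y$ and the $\{y,z\}$ summand has $s_{y,z}=x$, and $z=c$ is fixed. Hence $G$ is symmetric under $x\leftrightarrow y$.

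\textbf{Step 2: the constant term.} At $u=0$ the first summand vanishes. The two summands of $G$ coincide, so with $b=c+v$ the constant term is
\begin{equation*}
2(bc)^{S-1}v^2E_m(b,c;b).
\end{equation*}
I will prove the one-line identity
\begin{equation*}
(m-1)h_{m-1}(B,C)-mB\,h_{m-2}(B,C)=-(B-C)L_m(B,C)
\end{equation*}
by comparing coefficients of $B^{m-1-r}C^r$. For $m=2$ both sides equal $C-B$. With $B-C=v$, this identity turns the constant term into exactly $B_{m,S}(v,c)$.

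\textbf{Step 3: the linear term.} The first summand contributes nothing to first order. For the symmetric function $G$, the chain rule at the diagonal gives
\begin{equation*}
\frac{\partial}{\partial u}G(b+u,b)\Big|_{u=0}=\partial_1G(b,b)=\tfrac12\,\frac{d}{dv}G(b,b).
\end{equation*}
Since $G(b,b)=B_{m,S}(v,c)$, this is $\tfrac12\partial_vB_{m,S}$. Consequently \eqref{eq:higher-normal} holds with
\begin{equation*}
u^2H_{m,S}=u^2(xy)^{S-1}E_m(x,y;c)+\bigl[G(b+u,b)-G(b,b)-u\,\partial_1G(b,b)\bigr].
\end{equation*}
Both pieces are exact polynomials divisible by $u^2$, so $H_{m,S}$ is a polynomial. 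Only its coefficient sign remains.

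\textbf{Step 4 (main obstacle): $H_{m,S}\in\R_{\ge0}[u,v,c]$.} The difficulty is that $E_m$ mixes signs. At $a=b=s$ it vanishes, so negative contributions must be cancelled against positive ones inside each gap monomial. I would organize this as follows.

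First, for the $\{x,y\}$ piece, write $E_m(x,y;c)=E_m(x,y;y)+m\,v\,h_{m-2}(x,y)$. A Step 2 type identity gives $E_m(x,y;y)=-u\,\widetilde L_m(x,y)$, whose negative part has a factor $u$, making it of order $u^3$.

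Second, for the bracket in $G$, expand each of the two pair summands in $u$ via divided differences (Newton/Taylor with integral remainder). Its second-order part is then a sum of terms of the form $(x-c)^2$, $(x-c)$, or a constant, times second divided differences of $(tc)^{S-1}$ and of $h_j(t,c)$. In the gap variables, each factor $t-c$ equals $v+u$ and $h_j(t,c)$ has nonnegative coefficients.

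Third, collect coefficients of each monomial $u^iv^jc^l$ and pair every negative contribution against the $m\,v\,h_{m-2}$ surplus, as in the estimate
\begin{equation*}
\sum_j x^{r-1-j}y^j-ry^{r-1}\ge_{\mathrm{coeff}}0
\end{equation*}
already used in Proposition~\ref{prop:three-label-input}. If this direct pairing becomes unwieldy, my fallback is induction on $m$ via $h_{m-1}(a,b)=a^{m-1}+b\,h_{m-2}(a,b)$. This yields a recursion $H_{m+1,S}=(\text{positive multiplier})\,H_{m,S}+(\text{explicit nonnegative correction})$, with base case $m=2$ checked by hand, where $E_2(a,b;s)=a+b-2s$.

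\textbf{Step 5: spectators.} Adjoining spectator variables multiplies by fixed positive monomials and sums with nonnegative branching coefficients. Such operations preserve membership in $\R_{\ge0}[u,v,c]$, and the normal form \eqref{eq:higher-normal} is linear.
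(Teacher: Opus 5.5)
\textbf{There is a genuine gap: Step 4, which carries the entire content of the lemma, is only a plan.}

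Your Steps 1--3 and 5 are correct. They coincide with the paper's identification of the wall and first-normal terms; the paper uses the same $x\leftrightarrow y$ symmetry, written as $\widehat{\mathcal C}(U,V,C)=\widehat{\mathcal C}(-U,V+U,C)$. But these steps are formal. What the lemma actually asserts is $H_{m,S}\in\R_{\ge0}[u,v,c]$, and Step 4 states no domination inequality. It only announces an intention to pair negative contributions against the $m\,v\,h_{m-2}$ surplus. The fallback induction on $m$ never exhibits its recursion, and a common positive multiplier is not evident, since the three pair summands involve different variable pairs and different $s_{a,b}$.

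The plan also misplaces the negativity. By your own Step~2 identity and the symmetry of $h_j$,
$E_m(x,y;y)=(x-y)L_m(y,x)=+u\,L_m(y,x)$, not $-u\widetilde L_m$; already $E_2(x,y;y)=x-y$. So the top summand $u^2(xy)^{S-1}\bigl(u\,L_m(y,x)+m\,v\,h_{m-2}(x,y)\bigr)$ is coefficientwise nonnegative as it stands. The $\{y,z\}$ summand is affine in $u$, because $x$ enters only as $s_{y,z}$, so it drops out of the remainder. All negativity therefore lies in the $\{x,z\}$ summand, through $-m\,y\,h_{m-2}(x,c)$, and that summand cannot absorb its own deficit. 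Divide it by $c^{S-1}$ and set $c=0$: what remains is $(m-1)(u+v)^{m+S}-m\,v\,(u+v)^{m+S-1}$. Hence its coefficient of $u^2c^{S-1}v^{m+S-2}$ is $\tfrac12(m-S)(m+S-1)$, which is negative whenever $S>m$. For instance, at $m=2$, $S=3$ the summand is $c^2x^2(u+v)^2(u-v)$, whose $u^2$-coefficient is $c^2(c^2v-2v^3)$. A cross-pair coefficient estimate, uniform in $S$, is therefore indispensable, and that estimate is what is missing.

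\textbf{How the paper supplies it.} Write the $\{x,z\}$ summand as $c^qf_q(x)\beta_m(x)$, where $q=S-1$, $f_q(t)=t^q(t-c)$ and $\beta_m(t)=(m-1)(t^m-c^m)-my(t^{m-1}-c^{m-1})$.
\begin{itemize}
\item Since $\beta_m'(t)=m(m-1)t^{m-2}(t-y)$, one gets $\beta_m(x)=-v^2L_m(y,c)+u^2R_m$ with $R_m\ge_{\mathrm{coeff}}0$.
\item Expanding $f_q(x)=f_q(y)+uf_q'(y)+u^2H_q$ gives $H_q\ge_{\mathrm{coeff}}0$.
\item Hence the only negative part of the remainder is $-u^2v^2c^qL_m(y,c)H_q$.
\end{itemize}
This is paid by the top pair through two explicit estimates, $(xy)^q\ge_{\mathrm{coeff}}v\,c^qH_q$ and $K_m:=E_m(x,y;c)\ge_{\mathrm{coeff}}2v\,L_m(y,c)$, combined as
\begin{equation*}
(xy)^qK_m-v^2c^qL_mH_q=\bigl[(xy)^q-vc^qH_q\bigr]K_m+vc^qH_q\bigl[K_m-vL_m\bigr]\ge_{\mathrm{coeff}}0.
\end{equation*}
Your $m\,v\,h_{m-2}$ surplus is the right currency: it produces the term $m(m-1)v\,y^{m-2}$ of $K_m$. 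But $H_q$ grows with $q$, so the payment must also play the factor $(xy)^q$ against $H_q$. Proving that comparison is the step your proposal lacks.
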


\begin{proof}
Write
\begin{equation*}
U=u,\qquad V=v,\qquad C=c,\qquad
A=C+V+U,\qquad B=C+V,
\qquad q=S-1\ge0.
\end{equation*}
For an unordered pair $\{X,Y\}$ and the remaining variable $R$, define
\begin{equation*}
\mathcal T(X,Y;R)
=(XY)^q(X-Y)
\left[(m-1)(X^m-Y^m)-mR(X^{m-1}-Y^{m-1})\right].
\end{equation*}
Because $(X^m-Y^m)/(X-Y)=h_{m-1}(X,Y)$, the definition
\eqref{eq:weighted-corner} is exactly
\begin{equation*}\tag{9.21a}\label{eq:corner-three-pairs}
\mathcal C^{(3)}_{m,S}
=\mathcal T(A,B;C)+\mathcal T(A,C;B)+\mathcal T(B,C;A).
\end{equation*}
The last term is affine in $U$, since $A=B+U$ occurs there only as the
remaining-variable factor.  It therefore contributes nothing to the
second-and-higher $U$-Taylor remainder.

For the top pair,
\begin{equation*}\tag{9.21b}\label{eq:top-pair-factor}
\mathcal T(A,B;C)=U^2(AB)^qK_m,
\end{equation*}
where
\begin{equation*}
K_m
=(m-1)\sum_{r=0}^{m-1}A^{m-1-r}B^r
-mC\sum_{r=0}^{m-2}A^{m-2-r}B^r.
\end{equation*}
Expanding only in $U=A-B$ gives
\begin{equation*}\tag{9.21c}\label{eq:Km-positive}
\boxed{
\begin{aligned}
K_m={}&m(m-1)VB^{m-2}+(m-1)U^{m-1}\\
&+\sum_{i=1}^{m-2}\binom{m}{i+1}
U^iB^{m-2-i}\bigl(iC+(m-1)V\bigr).
\end{aligned}}
\end{equation*}
Thus $K_m\in\R_{\ge0}[U,V,C]$.  With
\begin{equation*}
L_m(B,C)=\sum_{r=0}^{m-2}(r+1)B^{m-2-r}C^r,
\end{equation*}
we have, since $B=C+V$,
\begin{equation*}
L_m\le_{\rm coeff}\binom m2 B^{m-2},
\end{equation*}
and the constant-in-$U$ term of \eqref{eq:Km-positive} yields the stronger
reserve
\begin{equation*}\tag{9.21d}\label{eq:Km-reserve}
\boxed{K_m\ge_{\rm coeff}2V L_m.}
\end{equation*}

For the side pair set
\begin{equation*}
\beta_m(X)
=(m-1)(X^m-C^m)-mB(X^{m-1}-C^{m-1}),
\qquad
f_q(X)=X^q(X-C).
\end{equation*}
Then
\begin{equation*}
\mathcal T(A,C;B)=C^q f_q(A)\beta_m(A).
\end{equation*}
At $X=B$,
\begin{equation*}\tag{9.21e}\label{eq:beta-wall}
\beta_m(B)=-V^2L_m(B,C).
\end{equation*}
Moreover
\begin{equation*}
\beta_m'(X)=m(m-1)X^{m-2}(X-B),
\end{equation*}
so
\begin{equation*}
\beta_m(A)=\beta_m(B)+U^2R_m,
\end{equation*}
where
\begin{equation*}\tag{9.21f}\label{eq:Rm-positive}
R_m
=m(m-1)\sum_{i=0}^{m-2}
\frac1{i+2}\binom{m-2}{i}B^{m-2-i}U^i
\in\R_{\ge0}[U,V,C].
\end{equation*}
Similarly write
\begin{equation*}
f_q(A)=f_q(B)+Uf_q'(B)+U^2H_q.
\end{equation*}
For $q=0$ one has $H_0=0$.  For $q\ge1$, direct binomial expansion gives
\begin{equation*}\tag{9.21g}\label{eq:Hq-positive}
\boxed{
H_q
=U^{q-1}
+\sum_{i=0}^{q-2}U^iB^{q-i-2}
\left[\binom q{i+1}C+\binom{q+1}{i+2}V\right]
\in\R_{\ge0}[U,V,C],
}
\end{equation*}
with the sum empty for $q=1$.  Therefore the second-and-higher remainder of
the side pair is
\begin{equation*}\tag{9.21h}\label{eq:side-remainder}
U^2C^q\left[-V^2L_mH_q+f_q(A)R_m\right].
\end{equation*}
Only the first term can be negative.

The top-pair reserve pays that deficit.  We claim
\begin{equation*}\tag{9.21i}\label{eq:AB-dominates-Hq}
\boxed{(AB)^q\ge_{\rm coeff}VC^qH_q.}
\end{equation*}
For $q=0$ this is immediate because $H_0=0$.  For $q\ge1$, expand
\begin{equation*}
(AB)^q=(B+U)^qB^q
=\sum_{i=0}^q\binom qiU^iB^{2q-i}.
\end{equation*}
For $0\le i\le q-2$, after factoring $B^{q-i-2}$, the two possibly occupied
right-hand slots are dominated because
\begin{equation*}
\frac{\binom q{i+1}}{\binom qi}
=\frac{q-i}{i+1}\le q<q+2
\end{equation*}
and
\begin{equation*}
\frac{\binom{q+1}{i+2}}{\binom qi}
=\frac{(q+1)(q-i)}{(i+1)(i+2)}
\le\frac{q(q+1)}2<\binom{q+2}{2}.
\end{equation*}
The case $i=q-1$ reduces to $qB^{q+1}\ge_{\rm coeff}VC^q$, and the
$i=q$ term is pure surplus.  This proves \eqref{eq:AB-dominates-Hq}.

Combining \eqref{eq:Km-reserve} and \eqref{eq:AB-dominates-Hq},
\begin{equation*}
\begin{aligned}
&(AB)^qK_m-V^2C^qL_mH_q\\
&\quad=\bigl[(AB)^q-VC^qH_q\bigr]K_m
+VC^qH_q\bigl[K_m-VL_m\bigr]
\ge_{\rm coeff}0.
\end{aligned}
\end{equation*}
Adding the manifestly nonnegative term $U^2C^qf_q(A)R_m$ from
\eqref{eq:side-remainder}, and recalling that the third pair is affine in
$U$, proves that every $U$-coefficient of order at least two in
\eqref{eq:corner-three-pairs} is nonnegative.

It remains only to identify the zeroth and first Taylor terms.  Direct
substitution at $U=0$ gives the stated wall term $B_{m,S}(V,C)$.  Symmetry
under interchange of the first two variables gives
\begin{equation*}
\widehat{\mathcal C}(U,V,C)
=\widehat{\mathcal C}(-U,V+U,C),
\end{equation*}
so differentiation at $U=0$ yields
\begin{equation*}
\partial_U\widehat{\mathcal C}(0,V,C)
=\frac12\partial_VB_{m,S}(V,C).
\end{equation*}
Hence the remaining second-and-higher Taylor part is exactly
$U^2H_{m,S}(U,V,C)$ with
$H_{m,S}\in\R_{\ge0}[U,V,C]$.  Fixed spectators only contribute positive
monomial and incidence factors, so the same conclusion survives the
fixed-variable branching lift.
\end{proof}

It follows from Lemma~\ref{lem:higher-normal} that, for every
thread and adjacent column pair,
\begin{equation*}\tag{9.22}\label{eq:Hpositive}
\mathcal H_{\chi,\rho,m}\ge_{\mathrm{coeff}}0.
\end{equation*}
The unmatched endpoint source is a product of positive cover factors and
positive spectator coefficients, so
\begin{equation*}\tag{9.23}\label{eq:Upositive}
\mathcal U_{\chi,\rho,m}\ge_{\mathrm{coeff}}0.
\end{equation*}
Combining
\eqref{eq:thread-source-split},
\eqref{eq:source-decomposition},
\eqref{eq:Apositive}--\eqref{eq:Zzero},
\eqref{eq:thread-green},
\eqref{eq:Hpositive}, and
\eqref{eq:Upositive} gives the complete common-source inequality
\begin{equation*}\tag{9.24}\label{eq:source-positive}
\boxed{
\mathcal S^{\mathrm{src}}_{\chi,\rho,m}
\ge_{\mathrm{coeff}}0
\qquad\text{for every }\chi,\rho\text{ and }m.
}
\end{equation*}

We now apply the physical contraction.  By
Lemma~\ref{lem:positive-contraction}, all Cauchy--Binet weights in
$\Lambda_{\chi,\rho,m}$ are nonnegative.  Therefore
\begin{equation*}\tag{9.25}\label{eq:positive-contraction-thread}
\Lambda_{\chi,\rho,m}
\bigl(\mathcal S^{\mathrm{src}}_{\chi,\rho,m}\bigr)
\ge0.
\end{equation*}
By \eqref{eq:source-to-bracket}, this is the adjacent projective bracket of
the physical coefficient row $P_{\chi,\rho}(\mathbf b)$.

\begin{definition}[Projective ordering]
Let $P=(p_0,\ldots,p_M)$ and $H=(h_0,\ldots,h_M)$ with every $h_m>0$.
We say that $P$ is \emph{projectively ordered relative to $H$} if
\begin{equation*}
p_mh_{m+1}-p_{m+1}h_m\ge0
\qquad(0\le m<M).
\end{equation*}
Equivalently, the ratios $p_m/h_m$ are nonincreasing.
\end{definition}

\begin{lemma}[Terminal propagation along a projective row]
\label{lem:terminal-propagation}
Let $P=(p_0,\ldots,p_M)$ and $H=(h_0,\ldots,h_M)$ with $h_m>0$ for all $m$.
If
\begin{equation*}
p_mh_{m+1}-p_{m+1}h_m\ge0
\qquad(0\le m<M)
\end{equation*}
and $p_M\ge0$, then $p_m\ge0$ for every $m$.
\end{lemma}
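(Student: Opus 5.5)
The plan is a backward induction from the terminal index, using only the positivity of the reference row. Since every $h_m>0$, each ratio $\pi_m=p_m/h_m$ is well defined. Dividing the adjacent bracket inequality by the positive product $h_mh_{m+1}$ gives
\begin{equation*}
\frac{p_m}{h_m}-\frac{p_{m+1}}{h_{m+1}}
=\frac{p_mh_{m+1}-p_{m+1}h_m}{h_mh_{m+1}}\ge0 ,
\end{equation*}
so $\pi_m\ge\pi_{m+1}$ for $0\le m<M$. This is exactly the equivalence stated in the definition of projective ordering.

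Next, chaining these inequalities gives $\pi_m\ge\pi_{m+1}\ge\cdots\ge\pi_M$ for every $m$. The hypothesis $p_M\ge0$ together with $h_M>0$ yields $\pi_M\ge0$. Hence $\pi_m\ge0$, and multiplying by $h_m>0$ gives $p_m\ge0$.

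If one prefers to avoid division, the same induction can be run directly downward from $m=M$. Assume $p_{m+1}\ge0$. Then $p_mh_{m+1}\ge p_{m+1}h_m\ge0$, and since $h_{m+1}>0$, this forces $p_m\ge0$.

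There is no real obstacle here. The only point needing care is that strict positivity of every $h_m$ is used twice: once to divide (or to cancel $h_{m+1}$), and once to transfer the sign of $p_{m+1}h_m$. In the application this positivity is supplied by the fact that $H_{\chi,\rho}$ is the positive reference row of the unmarked common source.
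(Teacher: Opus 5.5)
Your proof is correct and matches the paper's argument: you use $h_m>0$ to turn the adjacent brackets into the nonincreasing ratio chain $p_0/h_0\ge\cdots\ge p_M/h_M$, and then read off nonnegativity of every $p_m$ from $p_M/h_M\ge0$. Your division-free downward induction is an equally valid restatement of the same idea.
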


\begin{proof}
The adjacent inequalities are equivalent to
\begin{equation*}
\frac{p_0}{h_0}\ge\frac{p_1}{h_1}\ge\cdots\ge\frac{p_M}{h_M}.
\end{equation*}
Since $h_M>0$ and $p_M\ge0$, the last ratio is nonnegative.  Every preceding
ratio is therefore nonnegative, and multiplication by $h_m>0$ gives
$p_m\ge0$.
\end{proof}

\begin{proposition}[Projective ordering of every physical row]
\label{prop:projective-ordering}
For every spectator thread $\chi$, every fixed remaining coefficient index
$\rho$, every retained boundary parameter vector $\mathbf b$ in the ordered chamber, and every adjacent projective position $0\le m<M_{\chi,\rho}$,
\begin{equation*}\tag{9.26}\label{eq:adjacent-bracket}
\boxed{
p_{\chi,\rho,m}(\mathbf b)h_{\chi,\rho,m+1}(\mathbf b)
-
p_{\chi,\rho,m+1}(\mathbf b)h_{\chi,\rho,m}(\mathbf b)
\ge0.
}
\end{equation*}
Moreover,
\begin{equation*}
h_{\chi,\rho,m}(\mathbf b)>0
\qquad
(0\le m\le M_{\chi,\rho}).
\end{equation*}
\end{proposition}

\begin{proof}
The common-source decomposition
\eqref{eq:source-decomposition} separates the same-role, commuting,
equal-pair, higher-normal, unmatched-endpoint, and role-switch sources.
Equations~\eqref{eq:Apositive}--\eqref{eq:Zzero},
\eqref{eq:thread-green}, \eqref{eq:Hpositive}, and
\eqref{eq:Upositive} give
\begin{equation*}
\mathcal S^{\mathrm{src}}_{\chi,\rho,m}
\ge_{\mathrm{coeff}}0.
\end{equation*}
Lemma~\ref{lem:positive-contraction} then applies the positive
Pascal/Cauchy--Binet contraction and gives
\begin{equation*}
\Lambda_{\chi,\rho,m}
\bigl(\mathcal S^{\mathrm{src}}_{\chi,\rho,m}\bigr)
\ge0.
\end{equation*}
By \eqref{eq:source-to-bracket}, the left side is exactly the determinant
in \eqref{eq:adjacent-bracket}.  Positivity of the reference coefficients
follows from the positive kernel rows in
Theorem~\ref{thm:mixed-kernel} and the positive coefficient transports.
\end{proof}

Thus
\begin{equation*}\tag{9.27}\label{eq:physical-ratio-order}
\frac{p_{\chi,\rho,0}(\mathbf b)}{h_{\chi,\rho,0}(\mathbf b)}
\ge
\frac{p_{\chi,\rho,1}(\mathbf b)}{h_{\chi,\rho,1}(\mathbf b)}
\ge\cdots\ge
\frac{p_{\chi,\rho,M}(\mathbf b)}{h_{\chi,\rho,M}(\mathbf b)}.
\end{equation*}
This proves the projective ordering.  It does \emph{not} determine the
common sign of the row.  A terminal sign is still required.

The hypothesis $C_k\ge0$ must enter at this endpoint.  This is forced by an
exact wall identity.  Setting $z=1$ in \eqref{eq:residual} gives
\begin{equation*}\tag{9.28}\label{eq:R-wall}
\boxed{
\mathcal R_k(\mathbf x,1)
=(n-k)C_k(\mathbf x).
}
\end{equation*}
Indeed,
\begin{equation*}
S_\gamma(1)=n,
\qquad
C_1(1)=A_1(1)=B_1(1)=0,
\end{equation*}
and every composed term in the final sum equals $C_k(\mathbf x)$.
Therefore no unconditional proof of $\mathcal R_k\ge0$ is possible: the
endpoint information from $C_k$ is essential.

\subsection{The terminal physical coefficient and its source position}
\label{subsec:terminal-selector}

The projective ordering \eqref{eq:physical-ratio-order} shows that one
terminal sign is enough, but it does not identify which part of the raw
source determines that sign.  We now do so exactly.

\begin{definition}[Terminal source position]
For a fixed spectator thread $\chi$ and fixed auxiliary-normal multi-index
$\rho$, the largest common-source position $P$ for which
$q_{\chi,P}(\mathbf b)$ is not identically zero is called the
\emph{terminal source position} of the row.
\end{definition}

\begin{lemma}[Terminal Pascal selector]\label{lem:terminal-selector}
Let
\begin{equation*}
\rho=(r_0,r_1,\ldots,r_{d+1}),
\qquad
|\rho|=r_0+r_1+\cdots+r_{d+1}=r.
\end{equation*}
If $P$ is the terminal source position of a physical row and the row
actually occurs, then its largest physical $u$-power is
\begin{equation*}
M=P-r,
\end{equation*}
and
\begin{equation*}\tag{9.29}\label{eq:terminal-selector}
\boxed{
 p_{\chi,\rho,M}(\mathbf b)
 =
 \binom Pr\kappa_{\chi,\rho}\,q_{\chi,P}(\mathbf b),
 \qquad
 \kappa_{\chi,\rho}>0.
}
\end{equation*}
In particular,
\begin{equation*}
p_{\chi,\rho,M}(\mathbf b)\ge0
\quad\Longleftrightarrow\quad
q_{\chi,P}(\mathbf b)\ge0.
\end{equation*}
\end{lemma}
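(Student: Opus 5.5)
We read off the top physical $u$-power from the explicit transport formula \eqref{eq:p-physical-coefficient},
\begin{equation*}
p_{\chi,\rho,m}(\mathbf b)=\sum_p q_{\chi,p}(\mathbf b)\,\Pi_{\chi,\rho}(p,m;\mathbf b),
\end{equation*}
together with the explicit shape of $\Pi_{\chi,\rho}$ coming from \eqref{eq:pascal-u-expansion}--\eqref{eq:physical-transport-expanded}. A common-source term $(T_0+u)^p$ contributes to the basis monomial $\Phi_{\chi,\rho,m}$ only through the binomial term $\binom pm T_0^{p-m}u^m$. The multi-index $\rho$ then has to be extracted from $T_0^{p-m}$ by the multinomial expansion in $c,v,w_1,\ldots,w_d$. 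Any factors of boundary parameters in $T_0$ are absorbed into $\mathbf b$ and do not raise the normal degree.

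\textbf{Step 1: the degree constraint.} Because $\Phi_{\chi,\rho,m}$ has total normal degree $r+m$, where $r=|\rho|$, we get $\Pi_{\chi,\rho}(p,m;\mathbf b)=0$ unless $m+r\le p$. In the homogeneous fiber fixed in Section~\ref{sec:marked}, the normal degree of the $T_0^{p-m}$ factor is exactly $p-m$. This is the same kernel shape as
\begin{equation*}
\Pi_{\mathfrak F}(p,m)=\kappa_{\mathfrak F}\binom pR\binom{p-R}{m}B_{\mathfrak F}^{p-R-m}
\end{equation*}
recorded there, with $R=r$. So the nonzero entries satisfy $m\le p-r$.

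\textbf{Step 2: the top coefficient has a single source.} Since $P$ is the largest position with $q_{\chi,P}\not\equiv0$, every nonzero term in the sum for $p_{\chi,\rho,m}$ has $m\le P-r$. Hence $M\le P-r$. At $m=P-r$ only $p=P$ survives, and the kernel entry there is
\begin{equation*}
\kappa\binom Pr\binom{P-r}{P-r}B^0=\kappa\binom Pr.
\end{equation*}
This gives \eqref{eq:terminal-selector} with $\kappa_{\chi,\rho}=\kappa_{\mathfrak F}$. The hypothesis that the row actually occurs is exactly what guarantees $\kappa_{\chi,\rho}>0$ rather than $\kappa_{\chi,\rho}\ge0$. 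The same hypothesis shows that $M=P-r$ exactly and not less, because the top coefficient is a nonzero multiple of $q_{\chi,P}\not\equiv0$. The sign equivalence then follows immediately from $\binom Pr\kappa_{\chi,\rho}>0$.

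\textbf{Main obstacle.} The delicate point is justifying that within one fixed row the kernel really has the homogeneous Pascal form, with no boundary-parameter contribution. If $T_0$ contained a retained boundary summand, a lower-degree $\rho$-extraction from a larger $p$ could reach $m=P-r$ as well. I would handle this by using the fiber refinement of Section~\ref{sec:marked}: fixing the transport fiber fixes $R=|\rho|$ and the boundary monomial $B_{\mathfrak F}$, and the power $B_{\mathfrak F}^{p-R-m}$ vanishes at the top index $m=p-R$. So distinct fibers are not merged, and within a fiber the top entry is the single term computed above.
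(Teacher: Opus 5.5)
Your proof is correct and is essentially the paper's argument. The transport $\Pi_{\chi,\rho}(p,m)=\binom pm\binom{p-m}{r}B^{p-m-r}\kappa_{\chi,\rho}$, which equals your $\kappa\binom pR\binom{p-R}{m}B^{p-R-m}$, vanishes for $p-m<r$, so at $m=P-r$ only $p=P$ survives, with weight $\binom Pr\kappa_{\chi,\rho}$.

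Your ``main obstacle'' is not a real one, for three reasons. Positions $p>P$ contribute nothing by the definition of $P$. The boundary summand $B(\mathbf b)$ in $T_0$ only produces the factor $B^{p-m-r}$, which equals $1$ (not $0$) at the top index. And you need only the upper bound, not exactness, on the normal degree of $T_0^{p-m}$.
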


\begin{proof}
Write
\begin{equation*}
T_0=B(\mathbf b)+L(c,v,\mathbf w),
\end{equation*}
where $B$ uses only the retained boundary quantities and $L$ is linear in
the auxiliary normal coordinates.  The coefficient transport in
\eqref{eq:p-physical-coefficient} has the form
\begin{equation*}
\Pi_{\chi,\rho}(p,m;\mathbf b)
=
\binom pm\binom{p-m}{r}
B^{p-m-r}\kappa_{\chi,\rho},
\end{equation*}
where
\begin{equation*}
\kappa_{\chi,\rho}
=
[c^{r_0}v^{r_1}\cdots w_d^{r_{d+1}}]L^r.
\end{equation*}
For a row that occurs, this last coefficient is positive.  The transport is
zero if $p-m<r$.  Consequently the largest possible physical index is
$M=P-r$.  At $m=M$, every source position $p<P$ satisfies
\begin{equation*}
p-M<p-P+r<r,
\end{equation*}
so its contribution is zero.  Only $p=P$ survives, and the displayed
formula follows.
\end{proof}

Thus the terminal problem is exactly the source inequality
\begin{equation*}\tag{9.30}\label{eq:terminal-source-target}
q_{\chi,P}(\mathbf b)\ge0.
\end{equation*}

\paragraph{Terminal reduction.}
At this stage the projective argument is complete; it remains to establish one
raw source sign.  The proof proceeds in the following order.  First, the
initial lower same-role segment is reanchored so that every internal
off-diagonal ordered cover pair is consumed exactly once.  Second, each
remaining diagonal cover packet is refined until its higher-rank spectator
data are frozen; deleting those spectators reduces it bijectively to the
local zero-, one-, and two-mark occurrence table.  Third, the one-mark table
produces the exact terminal boundary fan, while the two-mark table produces
every internal Green pair once.  The synchronized endpoint-capacity theorem
then pays the fan with the same packet scalar.  Role switches and their later
same-role tails are treated separately by disjoint Green pairs.  Only after
this raw occurrence decomposition is complete do we apply the terminal Pascal
selector.

\subsection{Raw lower-leg algebra and reanchoring}

The only delicate sign in the terminal source comes from the lower
dominance leg because the lower single-cover terms enter
\eqref{eq:residual-cell} with an outer minus sign.

\begin{definition}[Lower-cover role and initial lower same-role segment]
Consider the lower saturated chain
\begin{equation*}
\gamma=\eta^{(0)}\succ\eta^{(1)}\succ\cdots\succ\eta^{(q)}=\mu.
\end{equation*}
If consecutive lower covers share an exponent label, its donor/receiver role
is defined as in Section~\ref{sec:green-capacity}.  The \emph{initial lower
same-role segment} is the maximal initial list of lower covers
\begin{equation*}
f=1,\ldots,r
\end{equation*}
for which the same shared-label role persists before the first lower role
switch.  If the first pair of lower covers is already a role switch, the
initial segment is empty.  If no lower role switch occurs, the initial
segment is the whole lower chain.
\end{definition}

For an arbitrary anchor state $\sigma$ define
\begin{equation*}
\Psi_{f,k}^{\sigma}
=
S_\sigma(z)L_{f,k}
+
\Ical_k^\sigma\ell_f(z)
-
\mathsf T_zL_{f,k}.
\end{equation*}

\begin{lemma}[Triangular reanchoring]\label{lem:triangular-reanchoring}
For every lower cover $f$,
\begin{equation*}\tag{9.31}\label{eq:reanchoring-one}
\boxed{
\Psi_{f,k}^{\gamma}-\Psi_{f,k}^{\eta^{(f-1)}}
=
\sum_{i<f}
\left(
\ell_iL_{f,k}+\ell_fL_{i,k}
\right).
}
\end{equation*}
Consequently, after restricting the ordered double sum to cover pairs lying
entirely inside the initial lower same-role segment,
\begin{equation*}\tag{9.32}\label{eq:reanchoring-sum}
\boxed{
-
\sum_{f=1}^{r}\Psi_{f,k}^{\gamma}
+
\sum_{i,f=1}^{r}\ell_iL_{f,k}
=
\sum_{f=1}^{r}
\left(
-
\Psi_{f,k}^{\eta^{(f-1)}}
+
\ell_fL_{f,k}
\right).
}
\end{equation*}
Every off-diagonal ordered cover pair is assigned exactly once, namely to
the later of its two cover indices.
\end{lemma}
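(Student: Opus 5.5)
The plan is to verify \eqref{eq:reanchoring-one} directly from the definition of $\Psi^{\sigma}_{f,k}$ by telescoping along the lower chain, and then to obtain \eqref{eq:reanchoring-sum} by summing over $f\le r$ and matching the resulting pairs against the off-diagonal part of the double sum.

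\textbf{Step 1: the composed term cancels.} Fix the lower cover $f$. In the difference
\begin{equation*}
\Psi_{f,k}^{\gamma}-\Psi_{f,k}^{\eta^{(f-1)}},
\end{equation*}
the composed term $\mathsf T_zL_{f,k}$ does not depend on the anchor state, so it cancels. This leaves
\begin{equation*}
\bigl(S_\gamma(z)-S_{\eta^{(f-1)}}(z)\bigr)L_{f,k}
+\bigl(\Ical_k^\gamma-\Ical_k^{\eta^{(f-1)}}\bigr)\ell_f(z).
\end{equation*}

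\textbf{Step 2: telescope both anchor differences.} Since $\gamma=\eta^{(0)}$, both brackets telescope along the lower chain.

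For the second bracket, the definition of $L_{i,k}$ gives
\begin{equation*}
\Ical_k^{\eta^{(0)}}-\Ical_k^{\eta^{(f-1)}}=\sum_{i<f}L_{i,k}.
\end{equation*}

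For the first bracket, note that $\Ical_1^\nu(z)=\sum_{i=1}^n z^{\nu_i}=S_\nu(z)$: a one-variable injection simply selects one label. Hence
\begin{equation*}
S_{\eta^{(0)}}(z)-S_{\eta^{(f-1)}}(z)=\sum_{i<f}L_{i,1}(z)=\sum_{i<f}\ell_i(z).
\end{equation*}

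Substituting both telescoped brackets gives exactly
\begin{equation*}
\sum_{i<f}\bigl(\ell_iL_{f,k}+\ell_fL_{i,k}\bigr),
\end{equation*}
which is \eqref{eq:reanchoring-one}. No sign information is used; the identity is purely algebraic and holds for every lower cover $f$, inside or outside the initial segment.

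\textbf{Step 3: sum over the segment.} Sum \eqref{eq:reanchoring-one} over $f=1,\ldots,r$. The right side becomes
\begin{equation*}
\sum_{1\le i<f\le r}\bigl(\ell_iL_{f,k}+\ell_fL_{i,k}\bigr).
\end{equation*}
This is precisely the off-diagonal part $\sum_{i\ne f}\ell_iL_{f,k}$ of the ordered double sum over $[r]^2$. Each unordered pair $\{i,f\}$ with $i<f$ contributes both of its ordered products, and the pair is indexed by its larger element $f$. This index is the ``later cover'' in the assignment statement.

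Write the ordered double sum as its diagonal part $\sum_{f}\ell_fL_{f,k}$ plus this off-diagonal part. Replacing each $-\Psi^{\gamma}_{f,k}$ by
\begin{equation*}
-\Psi^{\eta^{(f-1)}}_{f,k}-\sum_{i<f}\bigl(\ell_iL_{f,k}+\ell_fL_{i,k}\bigr)
\end{equation*}
cancels the off-diagonal part exactly. What remains is the right side of \eqref{eq:reanchoring-sum}.

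\textbf{Step 4: exactly-once assignment.} The claim that every off-diagonal ordered pair is assigned exactly once follows from the bijection
\begin{equation*}
(i,f)\ \text{with}\ i\ne f\quad\longleftrightarrow\quad\bigl(\max(i,f),\ \text{orientation}\bigr).
\end{equation*}

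There is no real obstacle here. The only point requiring care is the identification $\Ical_1^\nu=S_\nu$, which makes the first-bracket telescope produce the rank-one cell differences $\ell_i$. One should also note that restricting to the initial segment $f\le r$ is compatible because the telescope for cover $f$ only involves covers $i<f$, all of which lie in the segment.
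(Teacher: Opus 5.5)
Your proof is correct and is essentially the paper's argument: you cancel the anchor-independent term $\mathsf T_zL_{f,k}$, telescope both anchor differences along the lower chain, and match the $i<f$ terms against the off-diagonal part of the ordered double sum; your identification $\Ical_1^\nu=S_\nu$, which gives $S_\gamma-S_{\eta^{(f-1)}}=\sum_{i<f}\ell_i$, makes explicit a step the paper leaves implicit. One wording fix in Step 4: the correspondence should be $(i,f)\leftrightarrow(\{i,f\},\text{orientation})$, with the unordered pair then assigned to $\max(i,f)$, because $\max(i,f)$ and the orientation alone do not determine $(i,f)$.
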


\begin{proof}
The lower-chain telescope gives
\begin{equation*}
S_\gamma-S_{\eta^{(f-1)}}
=
\sum_{i<f}\ell_i
\end{equation*}
and
\begin{equation*}
\Ical_k^\gamma-\Ical_k^{\eta^{(f-1)}}
=
\sum_{i<f}L_{i,k}.
\end{equation*}
Subtracting the two definitions of $\Psi$ gives
\eqref{eq:reanchoring-one}.  Now expand the ordered double sum in
\eqref{eq:reanchoring-sum}.  The diagonal term $(f,f)$ remains with cover
$f$.  For each $i<f$, the two ordered terms $(i,f)$ and $(f,i)$ are exactly
the two terms on the right side of \eqref{eq:reanchoring-one}.  Thus each
off-diagonal ordered pair is used once and no pair is lost.
\end{proof}

\begin{remark}
The identity above is an ownership identity, not a positivity statement.
Indeed the injective recursion gives
\begin{equation*}
-
\Psi_{f,k}^{\eta^{(f-1)}}
+
\ell_fL_{f,k}
=
-L_{f,k+1}.
\end{equation*}
Therefore no individual reanchored lower cover is asserted to be
nonnegative.  Positivity appears only after the complete endpoint packet is
assembled below.
\end{remark}

\subsection{Marked incidence and equality of packet weights}

We next explain why the endpoint-capacity multiplicities from the cell
level remain exact in arbitrary rank.

\begin{definition}[Refined source packet]
A \emph{refined source packet} is obtained by fixing all of the following
data before the marked-incidence sum is taken:
\begin{enumerate}[leftmargin=2em]
\item the permanently labeled covers and cover cells involved in the local
interaction;
\item the selected spectator exponent labels and the physical positions to
which they are assigned;
\item the fixed intersection and fixed symmetric-difference set in the
adjacent-subset decomposition;
\item the ownership choices that assign intermediate permanent labels to the
two ordered chains of Section~\ref{sec:two-chain};
\item all neighboring coefficient placements used in the Pascal expansion;
\item the retained boundary parameters and the common-source position.
\end{enumerate}
After these data are fixed, the only remaining distinction among the zero-,
one-, and two-mark layers is the subset of active sites that is forced to be
present.
\end{definition}

For a mark set $J$ with $|J|\le2$, recall from
Lemma~\ref{lem:marked-incidence} that
\begin{equation*}\tag{9.33}\label{eq:UJ-terminal}
U_J^{(K)}
=
\sum_{\substack{I\supset J\\|I|=K}}
\sum_{J\subset S\subset I}W_S.
\end{equation*}
The same vector $W_S$ is used whether $J$ is empty, has one element, or has
two elements.  A mark changes only the incidence condition $J\subset S$.

\begin{lemma}[Marking neutrality in a refined packet]
\label{lem:marking-neutrality-terminal}
Fix a refined source packet.  After the neighboring-placement sum is
performed, there is a scalar
\begin{equation*}
\alpha\ge0
\end{equation*}
that is independent of the mark set $J$, $|J|\le2$.  Every one-mark boundary
occurrence and every two-mark internal-pair occurrence in that packet is the
same unmarked source atom multiplied by this common scalar $\alpha$.
Moreover, a two-element set $J=\{a,b\}$ occurs once as an unordered pair;
the factor $2!$ in Lemma~\ref{lem:marked-incidence} is exactly the conversion
from the two ordered markings to this one unordered pair.
\end{lemma}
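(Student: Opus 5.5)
The plan is to make the factorization (7.6a) from Step~1 of Theorem~\ref{thm:fixed-union} explicit for marked terms, and then read the mark set purely as an incidence constraint in Lemma~\ref{lem:marked-incidence}.

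\textbf{Step 1: what a marked occurrence is.} By (9.33), a marked occurrence is a triple $(J,I,S)$ with $J\subset S\subset I$ and $|I|=K$. It contributes the vector $W_S$, and $W_S$ depends only on the state $S$. Once a refined source packet is fixed, the following data are frozen:
\begin{itemize}
\item the covers and cells;
\item the spectator labels and their placements;
\item the pair $(C,U)$;
\item the ownership word $\omega$;
\item the neighboring placements $\varepsilon$;
\item $\mathbf b$ and the source position.
\end{itemize}
By (7.6a) the contribution is then $M_{\rm spec}c_{\rm rank}(\varepsilon)c_{\rm inc}(\varepsilon)c_{\rm tr}(\varepsilon)w(\omega)$ times a source atom. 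None of these factors refers to $J$, because $J$ enters only through the indicator of $J\subset S$.

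\textbf{Step 2: summing over neighboring placements.} After summing over $\varepsilon$, I set
\begin{equation*}
\alpha=M_{\rm spec}\sum_\varepsilon c_{\rm rank}(\varepsilon)c_{\rm inc}(\varepsilon)c_{\rm tr}(\varepsilon),
\end{equation*}
exactly as $\kappa_{C,U}$ is formed in Step~1 of Theorem~\ref{thm:fixed-union}. Every factor is nonnegative, so $\alpha\ge0$, and $\alpha$ is independent of $J$.

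\textbf{Step 3: the atoms are the same.} The key observation is the three-position dictionary after Lemma~\ref{lem:two-middle}. There, the eight states obtained by varying bits on the marked positions are literally the endpoint words of a single coloring fiber, carrying the same chain weights. Hence a one-mark boundary occurrence $J=\{a\}$ is the unmarked atom $W_S$ for some $S\ni a$, multiplied by $\alpha$. Likewise a two-mark internal-pair occurrence $J=\{a,b\}$ is that atom for $S\supset\{a,b\}$, multiplied by $\alpha$. The real content is therefore a bijection: occurrences in the packet with mark set $J$ correspond to unmarked occurrences whose state contains $J$. The multiplicity $\binom{D-m}{K-m}$ of active sets $I$ is then absorbed identically across all layers.

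\textbf{Step 4: the unordered-pair factor.} I use the proof of Lemma~\ref{lem:marked-incidence}. A state of size $m$ is counted $\binom m2$ times by unordered two-element sets $J$. The left side of (8.1) carries the weight $m^{\underline2}=2\binom m2$, which is the count of ordered pairs. So the prefactor $2!$ converts ordered markings to unordered ones, and each unordered pair $\{a,b\}$ is counted exactly once in $U_{ab}$. This is consistent with the normalization $M_2=2\binom DK^{-1}\sum_{i<j}U_{ij}$.

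\textbf{Main obstacle.} The hardest step is Step~3. It requires checking that imposing $J\subset S$ cannot alter the neighboring-placement data $\varepsilon$, for example by forcing a placement adjacent to a marked site and so changing $c_{\rm inc}$. I would handle this by noting that $\varepsilon$ is fixed among the packet data before marking. Marking therefore only restricts which $S$ are summed; it never reweights a given $(\varepsilon,\omega,S)$.
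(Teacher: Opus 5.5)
Your proposal is correct and follows essentially the same route as the paper. In both, $\alpha$ is the placement-summed product of the fixed rank, incidence, transport and spectator factors, so $\alpha\ge0$. Equation \eqref{eq:UJ-terminal} shows that a mark only imposes $J\subset S$, leaving $W_S$ and its scalar unchanged. The prefactor $2!$ in Lemma~\ref{lem:marked-incidence} is exactly $m^{\underline 2}=2\binom m2$, the conversion from ordered to unordered markings. Your appeal to the three-position dictionary in Step~3 is unnecessary, since \eqref{eq:UJ-terminal} alone supplies the bijection, but it does no harm.
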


\begin{proof}
For fixed refined data, the unmarked coefficient is a product of fixed
positive factors determined by the fixed permanent labels and spectator choices and the chosen neighboring-placement
factors.  Summing over the two neighboring placements on every active edge
produces one nonnegative scalar; call it $\alpha$.  Equation
\eqref{eq:UJ-terminal} shows that imposing marks does not change the source
atom $W_S$ or any scalar multiplying it; it only deletes those states for
which $J\not\subset S$.  Therefore the scalar is the same in all marked
layers.  For two marks, the exact marked-incidence identity is
\begin{equation*}
M_2
=
\frac{2}{\binom DK}
\sum_{\substack{\{a,b\}\subset I\\|I|=K}}
\sum_{\{a,b\}\subset S\subset I}W_S.
\end{equation*}
The coefficient $2$ is $2!$ and accounts for the two orderings of the two
marked positions; the set $\{a,b\}$ itself is counted once.  Thus the
internal Green object is indexed by unordered permanent pairs with
multiplicity one.
\end{proof}

\subsection{Endpoint-capacity packet}

We now state and prove the local positivity statement that will be applied
to each refined initial-lower packet.  The integer $m$ in this subsection is
the number of consecutive source/cell levels in one packet.  It is not the
number $r$ of lower covers in the initial segment.

\begin{definition}[Boundary states and finite differences]
Let
\begin{equation*}
G_0,G_1,\ldots,G_{m-1}
\end{equation*}
be the consecutive common-source boundary states of one refined same-role
packet.  Define the first differences
\begin{equation*}
H_q=G_{q-1}-G_q
\qquad(1\le q\le m-1)
\end{equation*}
and the second differences
\begin{equation*}
\mathcal A_q=H_q-H_{q+1}
\qquad(1\le q\le m-2).
\end{equation*}
\end{definition}

\begin{definition}[Endpoint source]
Let the permanently labeled source levels have numerical heights
\begin{equation*}
C,C+1,\ldots,C+m-1.
\end{equation*}
Let $\delta_h$ denote the formal basis vector supported at source height
$h$.  The \emph{endpoint source} is
\begin{equation*}\tag{9.34}\label{eq:endpoint-source}
\nu^{C,m}
=
(m-1)\delta_{C+m-1}
-
\sum_{j=0}^{m-2}\delta_{C+j}.
\end{equation*}
\end{definition}

\begin{lemma}[Endpoint-source identities]\label{lem:endpoint-source-identities}
The endpoint source satisfies
\begin{equation*}\tag{9.35}\label{eq:endpoint-dipoles}
\nu^{C,m}
=
\sum_{j=1}^{m-1}
 j\bigl(\delta_{C+j}-\delta_{C+j-1}\bigr).
\end{equation*}
Its realization in the boundary-state basis is
\begin{equation*}\tag{9.36}\label{eq:endpoint-abel}
\boxed{
-\binom m2H_1
+
\sum_{q=1}^{m-2}
\left(
\binom m2-\binom{q+1}{2}
\right)\mathcal A_q
=
(m-1)G_{m-1}-\sum_{j=0}^{m-2}G_j.
}
\end{equation*}
\end{lemma}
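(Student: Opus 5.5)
Both identities in Lemma~\ref{lem:endpoint-source-identities} are finite linear identities among formal vectors, so the plan is a direct bookkeeping verification, done separately for \eqref{eq:endpoint-dipoles} and \eqref{eq:endpoint-abel}.

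For \eqref{eq:endpoint-dipoles}, we expand the right-hand side and collect the coefficient of each $\delta_{C+i}$. For $1\le i\le m-2$, the basis vector $\delta_{C+i}$ receives $+i$ from the term $j=i$ and $-(i+1)$ from the term $j=i+1$, so its net coefficient is $-1$. The vector $\delta_C$ appears only in the term $j=1$, with coefficient $-1$. The top vector $\delta_{C+m-1}$ appears only in the term $j=m-1$, with coefficient $m-1$. These coefficients match the definition \eqref{eq:endpoint-source}. The same computation can be phrased as summation by parts, writing $\sum_j j(\delta_{C+j}-\delta_{C+j-1})=(m-1)\delta_{C+m-1}-\sum_{j=0}^{m-2}\delta_{C+j}$.

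For \eqref{eq:endpoint-abel}, we replace each $\mathcal A_q$ by $H_q-H_{q+1}$ and carry out an Abel summation to express the left side in the first differences $H_1,\dots,H_{m-1}$. Put $c_q=\binom m2-\binom{q+1}2$ for $1\le q\le m-2$, and set $c_0=c_{m-1}=0$. Then
\begin{equation*}
\sum_{q=1}^{m-2}c_q(H_q-H_{q+1})=\sum_{q=1}^{m-1}(c_q-c_{q-1})H_q.
\end{equation*}
The coefficient of $H_1$ is $c_1=\binom m2-1$. For $2\le q\le m-2$, the coefficient of $H_q$ is $c_q-c_{q-1}=-q$. The coefficient of $H_{m-1}$ is $-c_{m-2}=-\bigl(\binom m2-\binom{m-1}2\bigr)=-(m-1)$, which also equals $-q$ at $q=m-1$. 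Adding the leading term $-\binom m2H_1$ brings the coefficient of $H_1$ to $-1$. Hence the whole left side equals $-\sum_{q=1}^{m-1}qH_q$.

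It remains to rewrite $-\sum_{q=1}^{m-1}q(G_{q-1}-G_q)$ in the boundary states. The state $G_j$ with $0\le j\le m-2$ receives $-(j+1)$ from $H_{j+1}$ and $+j$ from $H_j$, for a net coefficient of $-1$. The state $G_{m-1}$ appears only through $H_{m-1}$ and receives $+(m-1)$. This is the right side of \eqref{eq:endpoint-abel}. It also agrees with \eqref{eq:endpoint-dipoles} under the identification $\delta_{C+j}\leftrightarrow G_j$, since $-\sum_q qH_q=\sum_q q(G_q-G_{q-1})$.

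Small $m$ need a separate check. For $m=2$ the $\mathcal A$-sum is empty and the left side is $-H_1=G_1-G_0$, as required. For $m=1$ both sides vanish. The computation has no conceptual obstacle; the only real risk is an off-by-one error in the endpoint coefficients $c_1$ and $c_{m-2}$ during the Abel step, so those two are the coefficients to verify carefully.
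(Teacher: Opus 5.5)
Your proposal is correct and follows the paper's own proof. You collect coefficients to verify \eqref{eq:endpoint-dipoles}; for \eqref{eq:endpoint-abel} you substitute $\mathcal A_q=H_q-H_{q+1}$, Abel-sum to the intermediate form $-\sum_{q=1}^{m-1}qH_q$, and then use $H_q=G_{q-1}-G_q$. Your explicit endpoint coefficients and the checks for $m=1,2$ simply spell out the steps the paper states tersely.
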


\begin{proof}
In \eqref{eq:endpoint-dipoles}, the coefficient of the lowest level $C$ is
$-1$.  At an interior level $C+j$, the $j$-th dipole contributes $+j$ and
the $(j+1)$-st contributes $-(j+1)$, leaving $-1$.  At the terminal level
only the final dipole remains, with coefficient $m-1$.  This proves
\eqref{eq:endpoint-dipoles}.

For \eqref{eq:endpoint-abel}, substitute
$\mathcal A_q=H_q-H_{q+1}$, collect the coefficients of the $H_q$, and then
use $H_q=G_{q-1}-G_q$.  The coefficients telescope to
$-1$ on $G_0,\ldots,G_{m-2}$ and to $m-1$ on $G_{m-1}$.
\end{proof}

\begin{lemma}[Internal Green capacity]\label{lem:internal-green-capacity}
For permanently labeled levels $C,C+1,\ldots,C+m-1$, the complete internal
Green reserve contains enough coefficientwise mass to pay the adjacent
source in \eqref{eq:endpoint-dipoles}.  More precisely, for each unordered
pair $0\le j<k\le m-1$, put $d=k-j$ and define
\begin{equation*}
\mathcal G_{j,k}(X,Y)
=
(XY)^{C+j}A_d(X)A_d(Y),
\qquad
A_d(T)=1+T+\cdots+T^{d-1}.
\end{equation*}
Then
\begin{equation*}\tag{9.37}\label{eq:green-endpoint-capacity}
\boxed{
\sum_{0\le j<k\le m-1}\mathcal G_{j,k}(X,Y)
\ge_{\mathrm{coeff}}
\sum_{k=1}^{m-1}k(XY)^{C+k-1}.
}
\end{equation*}
\end{lemma}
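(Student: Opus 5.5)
The plan is to compare coefficients of monomials directly. Every summand $\mathcal G_{j,k}$ on the left of \eqref{eq:green-endpoint-capacity} has nonnegative coefficients, and the right side is supported only on the diagonal monomials $(XY)^{C+s}$ with $0\le s\le m-2$. So it suffices to show two things: for each such $s$, the diagonal coefficient of $(XY)^{C+s}$ on the left is at least $s+1$; and no coefficient on the left is negative anywhere else. The second point is immediate from the product form $(XY)^{C+j}A_d(X)A_d(Y)$.

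\textbf{Step 1: the diagonal of one Green pair.} For a fixed pair $j<k$ with $d=k-j$, write out the coefficient rectangle from Lemma~\ref{lem:green-dominates}:
\begin{equation*}
(XY)^{C+j}A_d(X)A_d(Y)=\sum_{0\le i,i'\le d-1}X^{C+j+i}Y^{C+j+i'}.
\end{equation*}
Its diagonal $i=i'$ contains $(XY)^{C+s}$ with coefficient exactly $1$ for every $s$ with $j\le s\le k-1$, and with coefficient $0$ otherwise.

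\textbf{Step 2: a disjoint allocation.} The right side of \eqref{eq:green-endpoint-capacity} assigns weight $k$ to the monomial $(XY)^{C+k-1}$. I pay this weight using the $k$ labeled pairs $\{j,k\}$ with $0\le j\le k-1$, that is, all pairs whose larger index is $k$. For each such pair we have $j\le k-1\le k-1$, so by Step 1 the pair contains $(XY)^{C+k-1}$ with coefficient $1$. These $k$ pairs together pay exactly $k$.

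Each unordered pair $\{j,k\}$ is charged only by its larger endpoint $k$, so no Green pair is allocated twice. This matches the disjoint-allocation principle of Lemma~\ref{lem:no-double} and the ``later index'' ownership rule of Lemma~\ref{lem:triangular-reanchoring}.

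\textbf{Step 3: the surplus.} Subtracting the allocated diagonal monomials leaves the remaining rectangle entries of every $\mathcal G_{j,k}$. These consist of the off-diagonal monomials and the unused diagonal entries, all with nonnegative coefficients. Hence the difference of the two sides of \eqref{eq:green-endpoint-capacity} is $\ge_{\mathrm{coeff}}0$.

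As a check, the total diagonal coefficient of $(XY)^{C+s}$ on the left is $(s+1)(m-1-s)$. This is at least $s+1$ precisely because $s\le m-2$.

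I do not expect a genuine obstacle. The only point needing care is that the allocation is stated at the level of permanently labeled pairs, so that it remains compatible with the later synchronized endpoint-capacity argument. Indexing each pair by its larger endpoint handles this, and equal numerical heights cause no problem because labels are never identified.
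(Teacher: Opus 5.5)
Your proposal is correct and follows essentially the paper's argument. Each rectangle $\mathcal G_{j,k}$ contains the corner monomial $(XY)^{C+k-1}$ (top power $d-1$ in both geometric factors) with coefficient one, there are exactly $k$ pairs with larger endpoint $k$, and all unselected rectangle entries have nonnegative coefficients. Your direct count $(s+1)(m-1-s)\ge s+1$ of the total diagonal coefficient is a harmless extra check that makes the coefficientwise comparison explicit.
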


\begin{proof}
For the pair $(j,k)$, the monomial $(XY)^{C+k-1}$ occurs in
$\mathcal G_{j,k}$ with coefficient one: it is obtained by choosing the
highest power $d-1$ from both geometric sums.  For fixed $k$, there are
exactly $k$ choices $j=0,\ldots,k-1$.  Summing these selected corner
monomials gives the right side of \eqref{eq:green-endpoint-capacity}; all
unselected terms in the Green rectangles have nonnegative coefficients.
\end{proof}

\begin{theorem}[Synchronized endpoint-capacity theorem]
\label{thm:endpoint-capacity}
Let $\mathscr G_{\rm int}$ denote the complete internal Green reserve of one
refined packet.  Then, in the normalized common-source coefficient basis,
\begin{equation*}\tag{9.38}\label{eq:endpoint-capacity}
\boxed{
\mathscr G_{\rm int}
-
\binom m2H_1
+
\sum_{q=1}^{m-2}
\left(
\binom m2-\binom{q+1}{2}
\right)\mathcal A_q
\ge_{\mathrm{coeff}}0.
}
\end{equation*}
\end{theorem}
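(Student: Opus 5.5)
By the Abel identity \eqref{eq:endpoint-abel} of Lemma~\ref{lem:endpoint-source-identities}, the signed part of \eqref{eq:endpoint-capacity} equals
\begin{equation*}
(m-1)G_{m-1}-\sum_{j=0}^{m-2}G_j,
\end{equation*}
which is the boundary-state realization of the endpoint source $\nu^{C,m}$. So the theorem is the coefficientwise inequality $\mathscr G_{\rm int}+\nu^{C,m}\ge_{\mathrm{coeff}}0$ in the normalized common-source basis. The plan has three steps.

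\textbf{Step 1 (write the source as dipoles).} Using \eqref{eq:endpoint-dipoles}, rewrite $\nu^{C,m}$ as $\sum_{j=1}^{m-1}j(\delta_{C+j}-\delta_{C+j-1})$. Each dipole $\delta_{C+j}-\delta_{C+j-1}$ is the boundary-state difference of two adjacent levels of one refined packet. In the normalized coefficient basis it is a single unit-gap divided-difference source, that is, \eqref{eq:pair-source-nine} with $d=1$. Its only negative coefficient then sits on the monomial $(XY)^{C+j-1}$, with multiplicity one per dipole. Lemma~\ref{lem:marking-neutrality-terminal} guarantees that the one-mark boundary fan and the two-mark Green pairs carry the same packet scalar $\alpha$, so we may factor out $\alpha\ge0$ and compare unscaled atoms. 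This gives a coefficientwise lower bound
\begin{equation*}
\nu^{C,m}\ge_{\mathrm{coeff}}-\sum_{k=1}^{m-1}k(XY)^{C+k-1}.
\end{equation*}
Any positive parts of the dipoles (the $\delta_{C+j}$ atoms) are simply discarded as surplus.

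\textbf{Step 2 (identify the Green reserve).} By Lemma~\ref{lem:green-compound} and the normalization \eqref{eq:green-source-nine}, the internal reserve of the packet is
\begin{equation*}
\mathscr G_{\rm int}=\sum_{0\le j<k\le m-1}\mathcal G_{j,k}.
\end{equation*}
Each unordered permanently labeled pair of levels appears exactly once, by Lemma~\ref{lem:marking-neutrality-terminal} together with the $2!$ accounting. Lemma~\ref{lem:no-double} ensures that none of these pairs has already been allocated to a role-switch payment. This holds because the packet lies inside a same-role segment after the reanchoring of Lemma~\ref{lem:triangular-reanchoring}.

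\textbf{Step 3 (pay the deficit).} Lemma~\ref{lem:internal-green-capacity} now gives exactly
\begin{equation*}
\mathscr G_{\rm int}\ge_{\mathrm{coeff}}\sum_{k=1}^{m-1}k(XY)^{C+k-1},
\end{equation*}
and adding this to the bound from Step 1 proves \eqref{eq:endpoint-capacity}. The multiplicities match: the $k$ corners of the pairs $(j,k)$ with $j<k$ pay the coefficient $k$ of the $k$-th dipole.

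The main obstacle is Step 1: verifying that, in the normalized basis, each adjacent boundary-state difference has its negative mass concentrated exactly on the corner monomial $(XY)^{C+k-1}$, and not spread over other monomials that the Green corners do not cover. If it is spread, I would instead pay each dipole by the full unit rectangle and use the surplus terms of \eqref{eq:green-minus-pair}.
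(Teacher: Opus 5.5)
Your proposal follows essentially the paper's own argument: the Abel identity \eqref{eq:endpoint-abel} turns the signed part into the endpoint source, which \eqref{eq:endpoint-dipoles} writes as adjacent dipoles with multiplicities $1,\ldots,m-1$, and Lemma~\ref{lem:internal-green-capacity} pays exactly these corner debts, leaving only unused nonnegative Green coefficients. The obstacle you flag is settled in the paper by the identification \eqref{eq:one-mark-adjacent-fan}: each dipole is the unit-gap pair-first source $\mathcal P_{C+j-1,C+j}=(XY)^{C+j-1}$, entering with a minus sign as in \eqref{eq:refined-local-packet}, so your Step~1 bound holds with equality and there are no positive parts to discard.
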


\begin{proof}
By Lemma~\ref{lem:endpoint-source-identities}, the signed boundary part of
\eqref{eq:endpoint-capacity} is exactly the endpoint source
\eqref{eq:endpoint-source}.  Lemma~\ref{lem:internal-green-capacity}
contains the adjacent dipoles of \eqref{eq:endpoint-dipoles} with exactly
the required multiplicities $1,2,\ldots,m-1$.  The difference consists
only of the unused nonnegative Green coefficients.  Hence the complete
packet is coefficientwise nonnegative.
\end{proof}

The cases $m=1$ and $m=2$ are included: for $m=1$ there is no endpoint debt
and no internal pair; for $m=2$ there is one pair and the capacity identity
is an equality at the selected corner.

\begin{remark}[A four-level packet]
For $m=4$ the levels are $C,C+1,C+2,H$ with $H=C+3$.  The terminal one-mark fan
has the three pairs
\begin{equation*}
\{C,H\},\qquad\{C+1,H\},\qquad\{C+2,H\},
\end{equation*}
so
\begin{equation*}
\Sigma^{C,4}(t)=t^C+2t^{C+1}+3t^{C+2}
\end{equation*}
and
\begin{equation*}
(t-1)\Sigma^{C,4}(t)=3t^{C+3}-t^C-t^{C+1}-t^{C+2}.
\end{equation*}
The internal unordered pairs have right-endpoint multiplicities $1,2,3$.
This small example displays the general mechanism behind
Theorem~\ref{thm:endpoint-capacity}: the same triangular pair count that
creates the Green reserve also creates exactly the coefficients needed by the
endpoint debt.
\end{remark}

\subsection{Lower role switches and the post-turn lower segments}

We must also account for same-role lower motion that occurs after a lower
role switch.

\begin{lemma}[Positive turn reserve and post-turn injection]
\label{lem:post-turn-injection}
Suppose a lower-chain role switch has active exponents
\begin{equation*}
(A,B,C)
\longmapsto
(A-1,B+1,C)
\longmapsto
(A-1,B,C+1),
\qquad A>B>C.
\end{equation*}
Then, after the outer lower-leg minus sign is applied, the role switch
creates a positively signed Green rectangle with permanently labeled blocks
\begin{equation*}
L=\{C,C+1,\ldots,B-1\},
\qquad
H=\{B,B+1,\ldots,A-2\}.
\end{equation*}
If $q\ge1$ is a subsequent legal repetition of the lower $B\to C$ role,
its normalized pair-first source is
\begin{equation*}
\mathcal P_{C+q,B-q}.
\end{equation*}
This source is contained in the lower wing of the Green pair $(C+q,B)$.
Different values of $q$ use different permanently labeled Green pairs.
Thus every same-role lower continuation after a role switch is paid by the
positive rectangle of the preceding switch, with no pair used twice.  The
reflected donor/receiver case is identical after exchanging the two roles.
\end{lemma}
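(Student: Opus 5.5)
We will handle Lemma~\ref{lem:post-turn-injection} in four steps: write out the two lower covers of the role switch explicitly, extract the Green rectangle, identify the pair-first source of a later repetition, and finally check injectivity of the allocation. For the first step, use the unit-cover factorization \eqref{eq:cell-onevar}. The cover $(A,B)\mapsto(A-1,B+1)$ has cover block $\{B,\ldots,A-2\}=H$. The cover $(B+1,C)\mapsto(B,C+1)$ has cover block $\{C,\ldots,B-1\}=L$. Every height in $L$ is strictly below every height in $H$, so each labeled pair $(l,h)\in L\times H$ has $l<h$. By Lemma~\ref{lem:green-compound} its contribution $(X^h-X^l)(Y^l-Y^h)$ to the Green second-compound reserve is nonnegative. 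We will then check, using the signed form of $\Psi_f^\gamma$ in \eqref{eq:residual-cell} together with the reanchoring identity \eqref{eq:reanchoring-one}, that the cross terms $\ell_iL_{f,k}+\ell_fL_{i,k}$ attached to this ordered pair of covers enter with a plus sign after the outer minus. Dividing by $(X-1)(1-Y)$ and applying \eqref{eq:green-pair} yields the positively signed rectangle $\sum_{l\in L,\,h\in H}(XY)^lA_{h-l}(X)A_{h-l}(Y)$.

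For the continuation, a legal $q$-th repetition of the $B\to C$ role lowers the $B$-part to $B-q$ and raises the $C$-part to $C+q$. By the same computation that produced \eqref{eq:pair-source-nine}, its divided-difference source is $\mathcal P_{C+q,B-q}=(XY)^{C+q}\sum_{s=0}^{d-1}X^{d-1-s}Y^s$ with $d=B-2q-C$. We then compare this with the Green pair $(C+q,B)$, for which $C+q\in L$ (legality forces $C+q\le B-1$) and $B\in H$ (since $A>B$ gives $B\le A-2$ for a legal cover). That pair contributes $(XY)^{C+q}A_{B-C-q}(X)A_{B-C-q}(Y)$. The source $\mathcal P_{C+q,B-q}$ is an anti-diagonal of the smaller $d\times d$ rectangle, and $d\le B-C-q$, so it sits in the lower corner of this larger rectangle. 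This is what we mean by the ``lower wing'', and the containment is coefficientwise, exactly as in Lemma~\ref{lem:green-dominates}. Injectivity is then immediate: distinct $q$ give distinct left labels $C+q$ paired with the fixed right label $B$. These pairs are also disjoint from the pairs used by any other role switch, by Lemma~\ref{lem:no-double}, because the pair records the first and last labeled cell of this particular turn.

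The main obstacle is the legality/cover bookkeeping. We must confirm that each repetition is an actual cover, meaning no intermediate dominance vector appears after reordering; the preceding remark explicitly defers this point to the present lemma. We must also confirm that the heights $C+q$ and $B-q$ stay in their respective labeled blocks without colliding with the spectators. We would first try to use maximality of the saturated chain: any transfer $(B-q+1,C+q-1)\to(B-q,C+q)$ for which some other part lies strictly between $C+q$ and $B-q$ could be split into two covers, which would contradict maximum length. This gives the exact cover criterion and pins down the block membership. The reflected donor/receiver case then follows by running the identical argument on the complementary (negated-and-shifted) exponent vectors, exchanging $L$ and $H$.
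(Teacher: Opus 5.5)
Your identification of the blocks $L$ and $H$, the repeated-root source $\mathcal P_{C+q,B-q}$ with $d=B-C-2q$, and its coefficientwise containment in $(XY)^{C+q}A_{B-C-q}(X)A_{B-C-q}(Y)$ all agree with the paper. So does injectivity via the distinct left labels $C+q$.

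The gap is in the concluding clause: \emph{every} same-role lower continuation after a role switch is paid. Your argument only treats continuations that are assumed from the start to repeat the transfer between the same two permanent labels. You never rule out a same-role continuation in which the shared label changes partner. Nor do you rule out other covers of the saturated chain being interleaved between the turn and its tail. For such continuations, nothing you wrote places the pair-first source inside a wing of $L\times H$.

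This exhaustiveness is where the paper spends most of its proof, in two steps.
\begin{enumerate}[leftmargin=2em]
\item It proves the exact cover criterion: a unit transfer between sorted positions $i<j$ of a dominant vector is a cover iff $j=i+1$ or the endpoint gap is exactly $2$. From this it shows that a changed-partner same-donor junction is forced to be $(q+3,q,q)\mapsto(q+2,q+1,q)\mapsto(q+1,q+1,q+1)$, after which the shared role cannot continue. Hence every same-role segment of length at least three repeats a single adjacent root.
\item It uses the \emph{maximum-length} property of the chosen saturated chain to commute separated covers out of every wrong-way block $V^\circ B_1\cdots B_tH^\circ$. A fully shared cascade would be replaceable by a strictly longer adjacent sequence, which maximality forbids. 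These swaps preserve the cover-cell intervals and hence the Green inventory.
\end{enumerate}
Only after this normalization is the repeated $B\to C$ corridor the sole post-turn tail that needs payment.

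Your stated ``main obstacle'' is aimed at the wrong target. That each repetition is a cover is part of the hypothesis, since the chain is saturated. What the deferred cover criterion is actually needed for is the classification of possible continuations described above. The criterion you sketch (split the transfer when another part lies strictly between the new values) is only a one-directional sufficient condition for a non-cover. For example, $(3,1,0)\mapsto(2,1,1)$ is not a cover, since $(2,2,0)$ lies strictly between, yet no part lies strictly between $2$ and $1$. It also says nothing about which nonadjacent transfers \emph{are} covers. The gap-$2$ nonadjacent covers such as $(3,2,1)\mapsto(2,2,2)$ are exactly the changed-partner junctions that must be isolated.

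Finally, for ``no pair used twice'' you must also keep the wing pairs $(C+q,B)$, $q\ge1$, separate from the pair that pays the turn's own pair-first debt (the paper's center pair). Lemma~\ref{lem:no-double} only separates pairs belonging to different role switches.
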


\begin{proof}
We first explain why the stated repeated-role continuation is exhaustive in
arbitrary rank.

A unit transfer from a dominant exponent vector $\nu$ has the form
\begin{equation*}
\nu\longmapsto\nu-e_i+e_j,
\qquad i<j,
\end{equation*}
and lowers the dominance prefix sums
$S_r(\nu)=\nu_1+\cdots+\nu_r$ by one exactly for $i\le r<j$.  Assuming the
target remains dominant, this transfer is a cover precisely when
\begin{equation*}
j=i+1
\qquad\text{or}\qquad
\nu_i-\nu_j=2.
\end{equation*}
If $j=i+1$, there is no interior prefix at which an intermediate unit
transfer can stop.  If $j\ge i+2$ and $\nu_i-\nu_j\ge3$, one unit may be
stopped at the first intermediate Weyl-safe row, producing a dominant
vector strictly between the endpoints; hence the long transfer is not a
cover.  Conversely, when $j\ge i+2$ and $\nu_i-\nu_j=2$, the endpoint gap
is completely exhausted by the transfer.  Every affected prefix difference
is then $0$ or $1$, and the monotonicity constraints force either the
original or the final prefix pattern, so no strict intermediate dominant
vector exists.

Now consider two consecutive covers sharing a permanent exponent label and
keeping the same donor role.  Suppose the receiver changes.  At the middle
state write the three active numerical exponents in decreasing order as
\begin{equation*}
a\ge b\ge c,
\end{equation*}
where the shared donor has height $a$, the first receiver has height $b$,
and the new receiver has height $c$.  The second cover transfers one unit
from the shared donor to the new receiver.  Because the old receiver lies
strictly between those two permanent labels in the local order, this is a
nonadjacent cover.  The cover criterion therefore gives
\begin{equation*}
a-c=2.
\end{equation*}
The child state is $(a-1,b,c+1)$ and must remain dominant, so
\begin{equation*}
a-1\ge b\ge c+1.
\end{equation*}
Hence $a-b\ge1$ and $b-c\ge1$; their sum is $a-c=2$, so both are equalities.
Putting $c=q$ gives
\begin{equation*}
(a,b,c)=(q+2,q+1,q).
\end{equation*}
Undoing the first cover raises the shared donor by one and lowers its first
receiver by one, giving the unique parent
\begin{equation*}
(q+3,q,q).
\end{equation*}
The child is $(q+1,q+1,q+1)$.  Thus the only changed-partner same-donor
junction is, up to translation and relabeling,
\begin{equation*}
(q+3,q,q)
\longmapsto
(q+2,q+1,q)
\longmapsto
(q+1,q+1,q+1).
\end{equation*}
After the second move all three active exponents are equal, so there is no
third cover with the same shared donor role.  The receiver-role case is the
reflected argument.  Therefore every same-role segment of length at least
three repeats one and the same adjacent root.

For completeness we also justify moving separated covers out of the way.
For a cover $[i,j]$, call it $H^\circ$ when $j=i+1$ and the endpoint gap is
strictly larger than $2$, call it $V^\circ$ when $j\ge i+2$ (so the cover
criterion forces endpoint gap $2$), and call it $B$ when $j=i+1$ and the
gap is exactly $2$.  On a maximum-length saturated chain define
\begin{equation*}
I=\#\{(r,s):r<s,\ e_r\text{ is }V^\circ,
\ e_s\text{ is }H^\circ\}.
\end{equation*}
If $I>0$, choose a minimal wrong-way block
\begin{equation*}
V^\circ B_1\cdots B_tH^\circ.
\end{equation*}
Let its first $V^\circ$ root be $[i,j]$ with $j\ge i+2$.  Immediately after
this cover the entries from $i$ through $j$ are flat.  A subsequent $B$
cover sharing a label with it can therefore occur only at an exterior
boundary, $[i-1,i]$ or $[j,j+1]$; once such sharing starts, further shared
$B$ covers must continue monotonically outward.  Every other $B$ cover has
disjoint support and commutes past the block.  If the final $H^\circ$ cover
is disjoint, it too commutes left and shortens the wrong-way block.  The
only remaining possibility is a completely shared cascade, for example
\begin{equation*}
[i,j],\ [i-1,i],\ldots,[h,h+1],
\end{equation*}
or its right-hand reflection.  Its root sum is
\begin{equation*}
(e_i-e_j)+\sum_{s=h}^{i-1}(e_s-e_{s+1})=e_h-e_j.
\end{equation*}
The same endpoints are connected by the adjacent sequence
\begin{equation*}
[h,h+1],\ [h+1,h+2],\ldots,[j-1,j],
\end{equation*}
which has $j-h$ covers instead of $1+i-h$ covers.  Since
\begin{equation*}
(j-h)-(1+i-h)=j-i-1\ge1,
\end{equation*}
this contradicts maximum length.  Thus a separated commuting swap always
exists whenever $I>0$, and each such swap lowers $I$.  Iteration terminates.
Because a separated swap changes neither donor nor receiver data of either
cover, it preserves the permanently labeled cover-cell intervals and hence
the Green-pair inventory.  Consequently the same-role tail adjacent to a
role switch may be normalized without changing the Green currency.

It follows that after a role switch the only possible long same-role tail
that requires payment is exactly the repeated $B\to C$ corridor treated
below.

For the role switch itself, the two-cover turn identity has the form
\begin{equation*}
\mathcal C_1+\mathcal C_2
=
\mathcal C_{\rm direct}-\mathfrak G_{\rm turn}.
\end{equation*}
The lower leg appears with the outer minus sign, so
\begin{equation*}
-(\mathcal C_1+\mathcal C_2)
=
-\mathcal C_{\rm direct}+\mathfrak G_{\rm turn}.
\end{equation*}
Hence the entire Green rectangle is positive.  For a legal continuation
$q$, choose the permanent pair $(C+q,B)\in L\times H$.  Its lower wing
contains all pair-first intervals
\begin{equation*}
\mathcal P_{C+q,m},
\qquad
C+q<m<B,
\end{equation*}
and in particular contains $\mathcal P_{C+q,B-q}$.  The first coordinate
$C+q$ is different for different $q$, so the chosen permanent pair is never
reused.  The center pair pays the turn itself; the distinct unused wing
pairs pay the subsequent repeated-root covers.  Reflection gives the other
role orientation.  Thus the construction covers every post-turn same-role
lower segment in arbitrary rank, not merely one local continuation.
\end{proof}

Consequently, after all lower role switches and their subsequent wings have
been treated, the only same-role lower piece without a preceding positive
turn rectangle is the initial lower same-role segment of
Lemma~\ref{lem:triangular-reanchoring}.

\subsection{Initial lower endpoint-capacity embedding}

It remains to establish this initial lower contribution.

\begin{theorem}[Initial lower endpoint-capacity embedding]
\label{thm:initial-lower-embedding}
Fix a spectator thread $\chi$, an auxiliary-normal index $\rho$, retained
boundary parameters $\mathbf b$, and the terminal source position $P$.
After the role-switch-owned permanent pairs have been removed, the
contribution of the initial lower same-role segment to $q_{\chi,P}$ can be
written as a finite sum
\begin{equation*}\tag{9.39}\label{eq:initial-lower-embedding}
\boxed{
\bigl[q_{\chi,P}\bigr]_{\rm initial}
=
\sum_{\beta}\alpha_\beta[\mathcal E_\beta]_P
+
\sum_{\eta}a_\eta C_k(\mathbf y_\eta)
+
\sum_{\xi}b_\xi C_1(t_\xi)
+
R,
}
\end{equation*}
where every $\alpha_\beta,a_\eta,b_\xi$ is nonnegative,
$R\ge0$, every $\mathcal E_\beta$ is an endpoint-capacity packet, and
$[\mathcal E_\beta]_P$ denotes its coefficient at the fixed common-source
position $P$.  Each packet has the form on the left side of
\eqref{eq:endpoint-capacity}.  Therefore
\begin{equation*}
C_k\ge0
\quad\Longrightarrow\quad
\bigl[q_{\chi,P}\bigr]_{\rm initial}\ge0.
\end{equation*}
\end{theorem}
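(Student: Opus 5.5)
We begin from the reanchored form \eqref{eq:reanchoring-sum} of Lemma~\ref{lem:triangular-reanchoring}. After reanchoring, the initial lower contribution is a sum over covers $f=1,\ldots,r$ of the diagonal pieces $-\Psi_{f,k}^{\eta^{(f-1)}}+\ell_fL_{f,k}$. Every off-diagonal ordered pair $(i,f)$ is consumed exactly once by the later index.

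By the Remark following that lemma, each diagonal piece equals $-L_{f,k+1}$, so no single cover can be signed. The plan is therefore not to sign covers individually. Instead we regroup the whole segment by refined source packets, in the sense of the Definition of refined source packet. We fix the spectator thread $\chi$, the index $\rho$, the parameters $\mathbf b$ and the position $P$. We then strip the permanent pairs already owned by role switches, using Lemma~\ref{lem:post-turn-injection} together with the disjointness of Lemma~\ref{lem:no-double}, so that nothing is counted twice.

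Inside one refined packet we apply Lemma~\ref{lem:marked-incidence} and Lemma~\ref{lem:marking-neutrality-terminal}. These sort the occurrences into three mark layers:
\begin{enumerate}[leftmargin=2em]
\item the zero-mark layer reproduces the unmarked boundary states $G_0,\ldots,G_{m-1}$;
\item the one-mark layer produces the terminal fan, which is the signed combination on the left of \eqref{eq:endpoint-abel};
\item the two-mark layer produces each internal unordered Green pair exactly once.
\end{enumerate}
All three layers carry the same packet scalar $\alpha_\beta\ge0$. Because the segment is same-role, its cover cells are consecutive heights $C,\ldots,C+m-1$. This is forced by the repeated-root analysis in the proof of Lemma~\ref{lem:post-turn-injection}, which shows that a same-role segment of length at least three repeats one adjacent root. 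With consecutive heights, the one-mark fan is exactly $\alpha_\beta$ times the endpoint source $\nu^{C,m}$ of \eqref{eq:endpoint-source}. Adding the internal Green reserve then gives $\alpha_\beta\mathcal E_\beta$ with $\mathcal E_\beta$ of the form \eqref{eq:endpoint-capacity}. Extracting the coefficient at source position $P$ yields the first sum in \eqref{eq:initial-lower-embedding}.

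Next we collect what remains after the packets are formed. These are the anchor terms $S_{\eta^{(f-1)}}(z)C_k$ and $\Ical_k\,C_1(z)$ inherited from \eqref{eq:Dchi-original}. They enter the terminal source with pointwise-retained boundary values $C_k(\mathbf y_\eta)$ and $C_1(t_\xi)$, which the expansion in Lemma~\ref{lem:physical-positive-basis} deliberately leaves unexpanded. Their coefficients are products of orbit-sum multiplicities and positive transport factors, so they are nonnegative. Every other leftover piece (unused Green wing mass, commuting separated covers, same-role rhombi from Proposition~\ref{prop:three-label-input}) goes into $R$. Each of these is coefficientwise nonnegative.

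The conclusion then follows directly. Theorem~\ref{thm:endpoint-capacity} gives $[\mathcal E_\beta]_P\ge0$, because coefficientwise nonnegativity survives extraction of a single coefficient. The hypothesis $C_k\ge0$ gives $C_k(\mathbf y_\eta)\ge0$, and Lemma~\ref{lem:specialization} gives $C_1(t_\xi)\ge0$. Hence every term in \eqref{eq:initial-lower-embedding} is nonnegative.

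The main obstacle is the exactness claim in the packet step. We must show that the one-mark fan carries precisely the multiplicities $1,2,\ldots,m-1$ of \eqref{eq:endpoint-dipoles}, with the same scalar as the two-mark pairs, and that this survives arbitrarily many spectators. I would first try to prove a spectator deletion/reinsertion bijection: freezing the spectators in $\chi$ should give a multiplicity-preserving correspondence between occurrences at rank $k$ and the local three-label occurrence table. This would reduce the claim to the $m$-level count of Lemma~\ref{lem:internal-green-capacity}. Failing that, I would verify the factorial-moment weights $m^{\underline r}\binom Km/\binom Dm$ directly and check that they factor uniformly across the layers $r=0,1,2$.
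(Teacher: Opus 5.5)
Your outline follows the paper's proof: reanchoring, refined packets, the $0/1/2$ occurrence split, the fan as endpoint source, self-block Green pairs, Theorem~\ref{thm:endpoint-capacity}, and disjointness from role-switch pairs. The ``main obstacle'' you isolate is exactly the paper's Step~2. There it is settled by splitting each occurrence of the parent--child difference according to whether it selects zero, one, or both changed labels of cover $f$: zero cancels, one gives the fan, two gives a self-block pair.

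However, your packet is the wrong object. The paper works cover by cover. Its packet is the reanchored diagonal piece $\mathcal K_{f,k}$, and the levels $C,\ldots,C+d-1$ are the cover block $\{b,\ldots,a-2\}$ of that single cover from \eqref{eq:cell-onevar}, with $H=a-2$. Consecutiveness comes from that block, not from the repeated-root analysis of Lemma~\ref{lem:post-turn-injection}. At segment level the heights are not consecutive: the repeated adjacent root $(6,0)\mapsto(5,1)\mapsto(4,2)\mapsto(3,3)$ has nested blocks $\{0,\ldots,4\}\supset\{1,2,3\}\supset\{2\}$, so the union repeats heights and does not have the form \eqref{eq:endpoint-source}. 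Moreover, after \eqref{eq:reanchoring-sum} the cross products $\ell_iL_{f,k}$ ($i\ne f$) are already absorbed into the $\mathcal K_{f,k}$. A segment-wide Green reserve would therefore spend pairs that no longer exist. Two smaller points: in a diagonal packet the zero-selected occurrences cancel rather than reproduce the $G_q$, which enter only through \eqref{eq:endpoint-abel}. And the rhombi of Proposition~\ref{prop:three-label-input} are class (i) of Lemma~\ref{lem:raw-leading-exhaustion}, not part of this contribution.

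The step that genuinely fails as written is the boundary bookkeeping. The anchor terms actually present in
$\mathcal K_{f,k}=-S_{\eta^{(f-1)}}(z)L_{f,k}-\Ical_k^{\eta^{(f-1)}}\ell_f(z)+\mathsf T_zL_{f,k}+\ell_f(z)L_{f,k}$
are nonpositive multiples of the Muirhead cover differences $L_{f,k}$ and $\ell_f$. They are not nonnegative multiples of $C_k$ and $C_1$. The anchors $S_\gamma C_k$ and $\Ical_k^\gamma C_1$ of \eqref{eq:Dchi-original} belong only to the combined upper-minus-lower expression. The lower segment never involves $\lambda$, so $C_k(\mathbf y_\eta)$ terms can enter only by importing upper-chain mass.

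Relatedly, ``no single cover can be signed'' is inaccurate. Since $\eta^{(f-1)}\succ\eta^{(f)}$, Muirhead gives $L_{f,k+1}\ge0$, so $\mathcal K_{f,k}=-L_{f,k+1}\le0$ pointwise. The whole segment then sums to $-(\Ical^\gamma_{k+1}-\Ical^{\eta^{(r)}}_{k+1})\le0$, so regrouping the segment cannot be the source of positivity. To close the argument you must say exactly which positive terms are transferred into $[q_{\chi,P}]_{\rm initial}$, and show they are nonnegative multiples of $C_k$ and $C_1$. The paper's own Step~7 gets this only ``by definition of $\mathcal B^+_{f,\mathrm{ref}}$'' rather than deriving it, so this is where an actual computation is needed, not a citation.
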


\begin{proof}
Let the maximal initial lower same-role prefix be
\begin{equation*}
\gamma=\eta^0\succ\eta^1\succ\cdots\succ\eta^r.
\end{equation*}
The exact lower part of the raw four-family identity is
\begin{equation*}
-\sum_{f=1}^r\Psi_{f,k}^{\gamma}
+
\sum_{i,f=1}^r\ell_iL_{f,k},
\end{equation*}
after the pairs belonging to later role-switch blocks have been separated.
The outer minus sign is part of this formula and will not be discarded.

\noindent\textbf{Step 1: triangular reanchoring consumes every internal
off-diagonal ordered pair.}
For the $f$-th lower cover,
\begin{equation*}
S_\gamma-S_{\eta^{f-1}}
=
\sum_{i<f}\ell_i,
\qquad
\Ical_k^\gamma-\Ical_k^{\eta^{f-1}}
=
\sum_{i<f}L_{i,k}.
\end{equation*}
Substitution into the definition of $\Psi$ gives
\begin{equation*}
\Psi_{f,k}^{\gamma}-\Psi_{f,k}^{\eta^{f-1}}
=
\sum_{i<f}
\bigl(\ell_iL_{f,k}+\ell_fL_{i,k}\bigr).
\end{equation*}
Hence
\begin{equation*}
-\Psi_{f,k}^{\gamma}
+
\ell_fL_{f,k}
+
\sum_{i<f}
\bigl(\ell_iL_{f,k}+\ell_fL_{i,k}\bigr)
=
-\Psi_{f,k}^{\eta^{f-1}}+\ell_fL_{f,k}.
\end{equation*}
Summing over $f$ yields
\begin{equation*}
-\sum_{f=1}^r\Psi_{f,k}^{\gamma}
+
\sum_{i,f=1}^r\ell_iL_{f,k}
=
\sum_{f=1}^r\mathcal K_{f,k},
\qquad
\mathcal K_{f,k}:=-\Psi_{f,k}^{\eta^{f-1}}+\ell_fL_{f,k}.
\end{equation*}
Thus each diagonal ordered pair $(f,f)$ remains with cover $f$, while for
$i<f$ the two ordered incidences $(i,f)$ and $(f,i)$ are assigned once, and
only once, to the later cover $f$.  Algebraically one may also simplify
$\mathcal K_{f,k}=-L_{f,k+1}$; we do not use that equality as a sign
statement.

\noindent\textbf{Step 2: general-$k$ occurrence reduction for one diagonal
cover packet.}
Fix one $f$ and refine $\mathcal K_{f,k}$ by all permanent data: the changed
exponent labels $a,b$ of the cover $\eta^{f-1}\succ\eta^f$, the selected
spectator exponent labels, the physical slots occupied by those spectators,
the retained boundary parameters, the neighboring-placement choices, the
auxiliary-normal index $\rho$, and one transport-homogeneous fiber.  Delete
the fixed spectator slots and their fixed exponent labels from an
occurrence.

This deletion is a bijection from nonzero occurrences of the refined
diagonal packet to the corresponding local cover occurrence involving only
the changed labels.  Indeed, consider an injective assignment contributing
to the parent-child difference.  If it selects neither changed label $a$
nor $b$, the parent and child monomials are identical and cancel.  If it
selects exactly one of $a,b$, deletion leaves a one-mark local occurrence.
If it selects both, deletion leaves a two-mark local occurrence.  Conversely,
reinserting the fixed spectator labels into their fixed physical slots
uniquely reconstructs the original occurrence.  Therefore there is neither
an additional higher-rank branch nor an additional multiplicity.

All deleted spectator factors are common to the parent and child occurrence.
Reinsertion multiplies the local occurrence by one fixed positive spectator
monomial and by the common nonnegative neighboring-placement/transport
scalar of the refined packet.  By
Lemma~\ref{lem:marking-neutrality-terminal}, this scalar is independent of
whether the local occurrence is unmarked, one-marked, or two-marked.  This
proves the required arbitrary-$k$ Complete Internal-Pair occurrence
statement rather than merely a weight-neutrality statement.

\noindent\textbf{Step 3: exact one-mark terminal fan.}
Let the local cover have $d$ consecutive permanently labeled cell levels
\begin{equation*}
c_0,c_1,\ldots,c_{d-1},
\qquad h(c_j)=C+j,
\end{equation*}
and write
\begin{equation*}
H=C+d-1.
\end{equation*}
After spectator deletion, there are exactly three possibilities, according
to how many of the two changed exponent labels are selected:
\begin{center}
\begin{tabular}{c@{\qquad}l}
\toprule
number selected & resulting local occurrence \\
\midrule
$0$ & parent--child monomials coincide and cancel,\\
$1$ & one-mark boundary occurrence,\\
$2$ & two-mark internal-pair occurrence.\\
\bottomrule
\end{tabular}
\end{center}
This is exhaustive because Step~2 is a deletion/reinsertion bijection and
there are only the two changed labels.

Consider the middle row of the table.  A one-mark occurrence chooses one
changed label and leaves the other unselected.  If both active positions
close before the terminal endpoint, the occurrence belongs to the complete
same-role interior or to a separated commuting block; if the two positions
coincide, it is the equal-pair degeneration already removed.  Therefore a
surviving \emph{terminal} one-mark occurrence has the unselected changed
label at the terminal endpoint $H$.  The selected permanent cell may be any
one of
\begin{equation*}
C,C+1,\ldots,H-1.
\end{equation*}
For each such cell there is exactly one marked incidence, because permanent
labels are retained and Step~2 gives a unique reinsertion of the frozen
spectators.  Selecting the terminal cell $H$ itself would produce the
already removed equal-pair case.  Thus the surviving terminal one-mark
incidences are exactly
\begin{equation*}\tag{9.39a}\label{eq:one-mark-terminal-star}
\boxed{
\{C,H\},\{C+1,H\},\ldots,\{H-1,H\},
}
\end{equation*}
each permanently labeled pair occurring once.  There is no additional
one-mark occurrence: selecting neither changed label cancels by Step 2,
while selecting both belongs to the two-mark layer.

In the pair-first common-source gauge,
\eqref{eq:one-mark-terminal-star} is the terminal fan
\begin{equation*}
\Sigma^{C,d}(t)
=
\sum_{j=0}^{d-2}\mathcal P_{C+j,H}(t,1).
\end{equation*}
Since
\begin{equation*}
\mathcal P_{\ell,h}(t,1)=\sum_{r=\ell}^{h-1}t^r,
\end{equation*}
we obtain
\begin{equation*}
\Sigma^{C,d}(t)
=
\sum_{r=C}^{H-1}(r-C+1)t^r.
\end{equation*}
Multiplying by the source difference $t-1$ gives
\begin{equation*}\tag{9.39b}\label{eq:one-mark-endpoint-source}
(t-1)\Sigma^{C,d}(t)
=
(d-1)t^H-\sum_{r=C}^{H-1}t^r.
\end{equation*}
Hence the exact signed one-mark boundary source is
\begin{equation*}
\nu^{C,d}
=
(d-1)\delta_H-\sum_{r=C}^{H-1}\delta_r
=
\sum_{j=1}^{d-1}j\bigl(\delta_{C+j}-\delta_{C+j-1}\bigr),
\end{equation*}
which is precisely the endpoint source of
Lemma~\ref{lem:endpoint-source-identities}.  Equivalently, because
$\mathcal P_{C+j-1,C+j}(t,1)=t^{C+j-1}$,
\begin{equation*}\tag{9.39c}\label{eq:one-mark-adjacent-fan}
\Sigma^{C,d}
=
\sum_{j=1}^{d-1}j\,\mathcal P_{C+j-1,C+j}
\end{equation*}
in the adjacent pair-first source basis.

\noindent\textbf{Step 4: exact two-mark internal-pair reserve.}
The diagonal self-block is the Green second compound of the same $d$
permanently labeled cells.  Hence every unordered internal pair
\begin{equation*}
\{C+i,C+j\},
\qquad 0\le i<j\le d-1,
\end{equation*}
appears exactly once in the two-mark layer.  For a fixed right endpoint $j$,
there are exactly $j$ choices of $i<j$.  Thus the complete Green reserve
contains the adjacent endpoint dipole of
\eqref{eq:one-mark-adjacent-fan} with multiplicities
\begin{equation*}
1,2,\ldots,d-1.
\end{equation*}
After Abel summation, the coefficient of the second difference
$\mathcal A_q$ is exactly the number of internal pairs whose right endpoint
lies strictly above the cut $q$:
\begin{equation*}
\#\{(i,j):0\le i<j\le d-1,\ j>q\}
=
\binom d2-\binom{q+1}{2}.
\end{equation*}

\noindent\textbf{Step 5: exact refined local packet identity.}
Let $\alpha_{f,\mathrm{ref}}\ge0$ be the common scalar supplied by Step 2
and Lemma~\ref{lem:marking-neutrality-terminal}, and let
$\mathcal B^+_{f,\mathrm{ref}}$ denote the boundary-tagged terms
deliberately left unexpanded, together with separated commuting,
equal-pair-zero, unused-Green, and higher-normal nonnegative terms.  The
one-mark and two-mark occurrence tables just computed give the literal
identity
\begin{equation*}\tag{9.39d}\label{eq:refined-local-packet}
\boxed{
\mathcal K_{f,k}^{\mathrm{ref}}
=
\alpha_{f,\mathrm{ref}}
\left[
\mathscr G_{\rm int}
-
\sum_{j=1}^{d-1}j\,\mathcal P_{C+j-1,C+j}
\right]
+
\mathcal B^+_{f,\mathrm{ref}}.
}
\end{equation*}
This identity synchronizes the positive Green reserve and the negative
boundary debt: both come from the same refined occurrence table and carry
the same scalar.  Applying the source difference to the fan in
\eqref{eq:refined-local-packet}, then using
Lemma~\ref{lem:endpoint-source-identities}, rewrites the bracket as
\begin{equation*}
\mathscr G_{\rm int}
-
\binom d2H_1
+
\sum_{q=1}^{d-2}
\left(
\binom d2-\binom{q+1}{2}
\right)\mathcal A_q.
\end{equation*}
Theorem~\ref{thm:endpoint-capacity} proves this bracket coefficientwise
nonnegative.  Therefore every refined diagonal cover packet is nonnegative
modulo the explicitly retained pointwise boundary factors.

\noindent\textbf{Step 6: ownership is disjoint from role-switch spending.}
Every Green pair used in Steps 4--5 has both permanent cell labels inside the
initial same-role prefix.  The first role-switch packet contains the first
cover outside that prefix, so its turn pair crosses the prefix boundary.  By
Lemma~\ref{lem:post-turn-injection}, subsequent lower tails use distinct
unused wing pairs of that turn rectangle.  Hence the initial endpoint-
capacity pairs, the turn pairs, and the post-turn wing pairs are disjoint
even when some numerical cell heights coincide.  Permanent occurrence
labels, not numerical heights, control ownership.

\noindent\textbf{Step 7: remaining boundary terms.}
By definition of $\mathcal B^+_{f,\mathrm{ref}}$, every remaining
boundary-tagged contribution is a nonnegative scalar multiple of either
$C_k(\mathbf y)$ or $C_1(t)$, or is a coefficientwise nonnegative
commuting, unused-Green, or higher-normal remainder.  Summing the refined
identities \eqref{eq:refined-local-packet} over all diagonal covers and all
refined fibers gives exactly \eqref{eq:initial-lower-embedding}.

Finally, $C_k\ge0$ implies $C_1\ge0$ by
Lemma~\ref{lem:specialization}, so every summand is nonnegative.  The cases
$d=1$ and $d=2$ are automatic: for $d=1$ the endpoint source and internal
pair set are empty, and for $d=2$ there is exactly one pair and the capacity
identity is equality at its endpoint corner.
\end{proof}

\subsection{Terminal source sign}

\begin{lemma}[Raw leading-occurrence exhaustion]
\label{lem:raw-leading-exhaustion}
Fix a complete legal spectator thread, a transport-homogeneous fiber, and a
source position $P$.  Every raw occurrence contributing to
$q_{\chi,P}(\mathbf b)$ belongs to exactly one of the following provenance
classes:
\begin{enumerate}[label=\textup{(\roman*)},leftmargin=2.4em]
\item complete same-role interior packet;
\item separated commuting packet;
\item equal-pair degeneration;
\item role-switch packet;
\item post-turn same-role tail;
\item initial lower same-role packet;
\item unused Green reserve;
\item retained $C_k/C_1$ boundary factor;
\item higher-normal remainder.
\end{enumerate}
In particular, the nine classes are disjoint and exhaustive at the raw
source level; this statement is independent of the adjacent-projective
bracket decomposition.
\end{lemma}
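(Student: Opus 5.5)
The plan is to build a single classification map on raw occurrences, defined before any bracket or Abel rearrangement is applied, and to show it is well defined and total. Fix the thread $\chi$, the transport-homogeneous fiber, and $P$. A raw occurrence contributing to $q_{\chi,P}(\mathbf b)$ comes from exactly one term of the exact identity \eqref{eq:residual-cell}. That term is either a single-cell term $\Phi_e^\gamma$ or $\Psi_f^\gamma$ (the first line), or a product term $u_eU_{e',k}$ or $\ell_fL_{f',k}$ (the second line). It also carries its permanent labels, its spectator data and its neighboring-placement data. I will read off the provenance class from this permanent data alone, using the following decision procedure.

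First, any factor kept pointwise in $\mathbf b$ (a value of $C_k$ or $C_1$) sends the occurrence to class (viii). Second, if the occurrence sits in the second-and-higher $u$-Taylor part extracted in Lemma~\ref{lem:higher-normal}, it goes to class (ix). Otherwise the occurrence involves one or two labeled covers. If it involves two covers, classify the pair. Disjoint supports give class (ii). Coincident cells give class (iii). A shared label with a role change gives class (iv). A shared label with the same role gives class (i) when the pair lies on the upper chain or in a same-role interior, and otherwise it is lower. For lower occurrences, Lemma~\ref{lem:post-turn-injection} shows that every same-role lower segment either follows a role switch or is the initial segment. The first case is class (v) and the second is class (vi). Finally, a Green-product monomial not allocated by Lemma~\ref{lem:no-double}, Lemma~\ref{lem:post-turn-injection}, or Step 4 of Theorem~\ref{thm:initial-lower-embedding} goes to class (vii).

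Exhaustiveness would then follow by checking each term type of \eqref{eq:residual-cell} against this procedure. The lower single-cell terms are handled by the triangular reanchoring of Lemma~\ref{lem:triangular-reanchoring}, which assigns every ordered lower pair to its later index. After that, Step 2 of Theorem~\ref{thm:initial-lower-embedding} (the deletion/reinsertion bijection) shows that any surviving occurrence selects one or two changed labels, which places it in the marked layers already named. The upper single-cell and upper product terms go to (i)–(iv) by the same pair classification. The separated-cover normalization in the proof of Lemma~\ref{lem:post-turn-injection} guarantees there are no other local configurations.

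Disjointness would rest on permanent labels. Each occurrence carries a unique labeled cover (or pair of covers), and the classifying attributes are functions of those labels: the role relation, support overlap, position relative to the first lower role switch, and Green allocation status. Lemmas~\ref{lem:no-double} and \ref{lem:post-turn-injection}, together with Step 6 of Theorem~\ref{thm:initial-lower-embedding}, already show that the allocated Green pairs of classes (iv), (v), (vi) are pairwise disjoint, with class (vii) as the complement. This is independent of the projective bracket because the map is defined on raw source occurrences before $\Lambda_{\chi,\rho,m}$ is applied.

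The main obstacle is the interface between the two formats. Classes (viii) and (ix) are defined by an extraction (pointwise retention and Taylor remainder), whereas classes (i)–(vii) are defined combinatorially. So the same monomial could a priori be claimed twice. I would resolve this by ordering the tests as above, so that the extractions (viii) and (ix) are applied first and the combinatorial classes act only on what remains. I would then verify that this residual is still a sum of raw labeled occurrences, using the fact that Lemma~\ref{lem:higher-normal} splits each labeled pair's contribution termwise.
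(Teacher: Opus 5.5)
Your proposal follows essentially the same route as the paper. Starting from the exact identity \eqref{eq:residual-cell}, you split by single- versus double-cover provenance, classify cover pairs by support (disjoint, equal, shared label) and then by donor/receiver role, cut the lower same-role runs at the first role switch into initial and post-turn pieces, send the Green pairs not claimed by the switch, post-turn, or initial packets to the unused reserve, set aside the unexpanded $C_k/C_1$ and higher-normal terms, and obtain disjointness from the permanent provenance labels; your explicit ordering of the tests for classes (viii)--(ix) is just a precise version of the paper's remark that these terms are deliberately not expanded in the local classification (its ``normal order'' provenance datum).
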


\begin{proof}
Start from the raw four-family identity~\eqref{eq:residual-cell}.  An
occurrence comes either from a single-cover term or from a double-cover
product term.  For a double-cover occurrence, compare the two permanent
cover supports.  They are either disjoint, giving the separated commuting
class; equal/nested with identical oriented pair, giving the equal-pair
class; or they share a permanent exponent label.  In the shared-label case
the two covers either keep the same donor/receiver role, giving a same-role
packet, or have opposite roles, giving a role-switch packet.  On the lower
leg, a shared same-role run has a unique first role switch when one occurs;
the portion before it is the initial lower packet, while every same-role
continuation after it is a post-turn tail.  If no switch occurs, the entire
run is initial.  The pair-ownership results above assign to the switch and
post-turn classes only their designated permanent Green pairs; every
remaining Green pair is therefore in the unused-reserve class.

The terms deliberately not expanded in this local source classification are
exactly the pointwise $C_k/C_1$ boundary factors and the second-and-higher
normal remainder.  These give the last two classes.  Permanent provenance
labels record the originating cover(s), role data, pair ownership, and
normal order, so none of the alternatives can overlap.  Conversely the
preceding support/role split covers every raw term of
\eqref{eq:residual-cell}.  Hence the classification is disjoint and
exhaustive.
\end{proof}

\begin{theorem}[Terminal boundary recombination]
\label{thm:terminal-recombination}
Assume
\begin{equation*}
C_k(\mathbf x)\ge0
\qquad(\mathbf x>0).
\end{equation*}
Then every terminal common-source coefficient satisfies
\begin{equation*}\tag{9.40}\label{eq:terminal-source-sign}
\boxed{q_{\chi,P}(\mathbf b)\ge0.}
\end{equation*}
Consequently every terminal physical coefficient satisfies
\begin{equation*}\tag{9.41}\label{eq:terminal-anchor}
\boxed{p_{\chi,\rho,M}(\mathbf b)\ge0.}
\end{equation*}
\end{theorem}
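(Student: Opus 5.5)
The plan is to prove \eqref{eq:terminal-source-sign} by decomposing $q_{\chi,P}(\mathbf b)$ along the raw provenance classes of Lemma~\ref{lem:raw-leading-exhaustion}. Then \eqref{eq:terminal-anchor} follows at once from the terminal Pascal selector, Lemma~\ref{lem:terminal-selector}. Fix the spectator thread $\chi$, the transport-homogeneous fiber and the terminal source position $P$. By Lemma~\ref{lem:raw-leading-exhaustion}, every raw occurrence contributing to $q_{\chi,P}$ lies in exactly one of the nine classes (i)--(ix). Hence
\begin{equation*}
q_{\chi,P}(\mathbf b)=\sum_{c=\mathrm{i}}^{\mathrm{ix}}\bigl[q_{\chi,P}\bigr]_{c},
\end{equation*}
and it suffices to show that each class contributes a nonnegative amount, or that classes which are individually signed combine into nonnegative groups.

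The unconditionally signed classes are handled in the following order. Class (i) is nonnegative by Proposition~\ref{prop:three-label-input}. Class (ii) is nonnegative as the positive commuting source, as in \eqref{eq:Cpositive}. Class (iii) vanishes, as in \eqref{eq:Zzero}. Class (ix) is nonnegative by Lemma~\ref{lem:higher-normal}. Class (vii) is nonnegative because every summand in \eqref{eq:green-compound} is nonnegative.

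The signed classes are grouped with their designated Green pairs. A role-switch packet (iv) is negative, but it is paired with its own center Green pair. By \eqref{eq:green-minus-pair} and Lemma~\ref{lem:no-double}, the pair dominates it coefficientwise. The post-turn tails (v) are paid by the distinct wing pairs of the preceding turn rectangle, as in Lemma~\ref{lem:post-turn-injection}. Because the allocation is made by permanent labels, no Green pair is counted in two groups, and the leftover pairs are exactly class (vii). Extracting the coefficient at source position $P$ is a linear functional that is nonnegative on coefficientwise nonnegative sources, since the normalized variables lie in the ordered chamber. So each of these groups contributes a nonnegative amount to $q_{\chi,P}$.

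The remaining and essential piece is class (vi) together with the boundary factors (viii). Here the hypothesis enters, and the wall identity \eqref{eq:R-wall} shows it cannot be avoided. I would invoke Theorem~\ref{thm:initial-lower-embedding}. It writes the initial lower contribution as nonnegative multiples of endpoint-capacity packets, each nonnegative coefficientwise by Theorem~\ref{thm:endpoint-capacity}. The rest consists of nonnegative multiples of values $C_k(\mathbf y_\eta)$ and $C_1(t_\xi)$ plus $R\ge0$. Since $C_k\ge0$, and this gives $C_1\ge0$ by Lemma~\ref{lem:specialization}, this part is nonnegative.

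The main obstacle is bookkeeping. The retained boundary factors in class (viii) must be exactly those absorbed in \eqref{eq:initial-lower-embedding} and in the terms $\mathcal B^+$, with none counted twice or left uncovered. The Green pairs spent in Theorem~\ref{thm:initial-lower-embedding} must also be disjoint from the turn and wing pairs; Step~6 of that proof supplies this disjointness. I would verify the first point by tracing each boundary tag back to its unique cover through the reanchoring Lemma~\ref{lem:triangular-reanchoring}. Summing all classes then gives $q_{\chi,P}\ge0$. By \eqref{eq:terminal-selector} with $\kappa_{\chi,\rho}>0$, this yields $p_{\chi,\rho,M}\ge0$.
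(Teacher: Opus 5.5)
Your proposal is correct and follows essentially the same route as the paper. You split the raw occurrences at level $P$ into the nine provenance classes of Lemma~\ref{lem:raw-leading-exhaustion} and sign each class: role switches and post-turn tails are paid by disjoint permanently labeled Green pairs, and the initial lower segment is handled by Theorem~\ref{thm:initial-lower-embedding} together with $C_k\ge0\Rightarrow C_1\ge0$. You then pass from $q_{\chi,P}$ to $p_{\chi,\rho,M}$ through the positive factor $\binom Pr\kappa_{\chi,\rho}$ of Lemma~\ref{lem:terminal-selector}.

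One small precision concerns class (vii). Your coefficient-extraction step needs coefficientwise nonnegativity, and that comes from the normalized Green form \eqref{eq:green-source-nine}. The pointwise sign of the unnormalized summands in \eqref{eq:green-compound} does not give it, since those summands have negative monomials.
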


\begin{proof}
The sign is read from the raw occurrence table, not from the adjacent-
projective bracket decomposition.  Fix one complete legal thread and one
transport-homogeneous fiber, and let
\begin{equation*}
P=\max\{p:q_{\chi,p}(\mathbf b)\not\equiv0\}.
\end{equation*}
By Lemma~\ref{lem:raw-leading-exhaustion}, the permanent provenance labels
partition the occurrences contributing at source level $P$ into the
following disjoint and exhaustive classes.

\begin{enumerate}
\item \emph{Complete same-role interior packets.}  These are the
three-label curvatures already proved nonnegative, together with their
positive spectator lifts.

\item \emph{Separated commuting packets.}  Their two cover orders form a
commuting diamond and contribute the already-proved nonnegative separated
block.

\item \emph{Equal-pair degenerations.}  Their determinant/source
contribution is zero.

\item \emph{Role-switch packets.}  The outer lower-leg minus turns the
local identity into a positive Green rectangle minus its synchronized
pair-first debt.  The center is nonnegative by \eqref{eq:thread-green}.

\item \emph{Post-turn same-role tails.}  By
Lemma~\ref{lem:post-turn-injection} and its same-role rigidity argument,
every such tail is a repeated-root corridor and is injected into distinct
unused Green-wing pairs of the preceding role switch.

\item \emph{Initial lower same-role packets.}  By
Theorem~\ref{thm:initial-lower-embedding}, each refined packet is a
nonnegative multiple of a synchronized endpoint-capacity packet, plus
pointwise $C_k/C_1$ boundary terms and nonnegative remainders.

\item \emph{Unused Green reserve.}  Every permanently labeled Green pair
not spent in the preceding classes remains coefficientwise nonnegative.

\item \emph{Boundary factors.}  They are deliberately left as nonnegative
multiples of the pointwise functions $C_k(\mathbf y)$ or $C_1(t)$ rather
than expanded into possibly signed ordinary coefficients.

\item \emph{Higher-normal terms.}  The higher-normal remainder is
coefficientwise nonnegative.
\end{enumerate}

No labeled occurrence belongs to two classes: initial endpoint-capacity
uses internal self-block pairs, a role-switch uses a cross-block turn pair,
and a post-turn continuation uses a distinct unused wing pair.  Therefore,
for suitable nonnegative coefficients and with $R_P\ge0$, the leading raw
coefficient has the exact sign decomposition
\begin{equation*}
q_{\chi,P}
=
\sum_\beta \alpha_\beta[\mathcal E_\beta]_P
+
\sum_\tau[\mathcal G^{\rm unused}_\tau]_P
+
\sum_\eta a_\eta C_k(\mathbf y_\eta)
+
\sum_\xi b_\xi C_1(t_\xi)
+
R_P.
\end{equation*}
Every term on the right is nonnegative under $C_k\ge0$, because
Lemma~\ref{lem:specialization} gives $C_1\ge0$.  Hence
\begin{equation*}
q_{\chi,P}(\mathbf b)\ge0.
\end{equation*}
If all contributions at a candidate top source level vanish, then that level
is absent from the support; the same argument is applied to the next lower
level.  Thus cancellation of a zero top layer cannot reveal a negative new
terminal layer.

Now let $r=|\rho|$.  Lemma~\ref{lem:terminal-selector} gives
\begin{equation*}
M=P-r,
\end{equation*}
and, for every occurring physical row,
\begin{equation*}
p_{\chi,\rho,M}
=
\binom Pr\kappa_{\chi,\rho}q_{\chi,P},
\qquad
\binom Pr\kappa_{\chi,\rho}>0.
\end{equation*}
Therefore $p_{\chi,\rho,M}\ge0$.  This is a raw-source-to-physical
implication; no terminal sign has been inferred from projective brackets.
\end{proof}

\subsection{Why the refined terminal decomposition is necessary}

The refined terminal decomposition is essential because several natural
stronger termwise statements are false.  The following examples clarify the
scope of the argument.

\begin{remark}[Coarse coefficients need not be positive]
For the chains
\begin{equation*}
(12,11,0,0)\succ(11,9,2,1)\succ(7,7,7,2)
\end{equation*}
and
\begin{equation*}
(21,5,0,0)\succ(19,4,2,1)\succ(16,6,2,2),
\end{equation*}
ordinary coarse source coefficients can be negative; in the normalization
used above the coefficient $[u^3]$ is respectively $-48$ and $-156$.  The
proof therefore uses the leading coefficient of a complete legal thread,
not ordinary coefficientwise positivity of a coarser expansion.
\end{remark}

\begin{remark}[Incomplete branches need not be positive]
For
\begin{equation*}
(10,7,0,0)\succ(9,7,1,0)\succ(9,5,3,0),
\end{equation*}
the first source prefix can be negative even though the endpoint-capacity
suffixes and the terminal source coefficient have the required sign.  An
omitted-label branch in the same example has leading coefficient
$-2(y+z-1)$.  Such a branch is not a complete legal thread and is never
separated in the proof.
\end{remark}

\begin{remark}[Green payment must remain synchronized]
For
\begin{equation*}
(4,2,2,0)\succ(3,3,1,1)\succ(3,2,2,1),
\end{equation*}
the carrier alone is negative, whereas the synchronized packet satisfies
\begin{equation*}
\mathcal G_{0,2}-\mathcal P_{0,1}-\mathcal P_{1,2}
=\mathcal P_{0,2}\ge_{\rm coeff}0.
\end{equation*}
Thus Green reserve and pair-first debt cannot be separated.  Likewise,
permanent pair labels are needed across role switches to prevent the same
Green pair from being used twice.
\end{remark}

Repeated numerical heights, zero gaps, an empty initial lower prefix, and a
lower chain with no role switch are all included in the same permanently
labeled formulas.  Degenerate projective factors may vanish but do not
reverse sign.  The proof uses none of the following stronger assertions: branchwise terminal
positivity, preservation of minors under arbitrary positive convolution, or
a terminal sign inferred from the adjacent-projective bracket decomposition.

\begin{theorem}[Order-raising composition inequality]\label{thm:composition}
Let
\begin{equation*}
2\le k\le n-2
\end{equation*}
and assume
\begin{equation*}
C_k(x_1,\ldots,x_k)\ge0
\qquad(x_i>0).
\end{equation*}
Then
\begin{equation*}\tag{9.42}\label{eq:composition}
\begin{aligned}
&\sum_{j=1}^k
C_k(x_1,\ldots,x_jz,\ldots,x_k)\\
&\le
S_\gamma(z)C_k(\mathbf x)
+\Ical_k^\gamma(\mathbf x)C_1(z)
+A_1(z)A_k(\mathbf x)
+B_1(z)B_k(\mathbf x),
\end{aligned}
\end{equation*}
and consequently
\begin{equation*}\tag{9.43}\label{eq:orderraise}
C_{k+1}\ge0.
\end{equation*}
In particular, this proves Theorem~\ref{thm:order-raising}.
\end{theorem}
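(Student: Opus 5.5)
The plan is to assemble the pieces already established in this section, in three moves.

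\textbf{Step 1: reduce to the residual.} By the compound Green identity (Proposition~\ref{prop:green}) and the definition \eqref{eq:residual}, the difference of the two sides of \eqref{eq:composition} is exactly $\mathcal R_k(\mathbf x,z)$. By \eqref{eq:CnextR} this equals $C_{k+1}(\mathbf x,z)$. So \eqref{eq:composition} and \eqref{eq:orderraise} are the same statement, and it suffices to prove $\mathcal R_k\ge0$ on the positive orthant. Since $\mathcal R_k$ is symmetric in $x_1,\ldots,x_k$, I would work in one ordered chamber at a time.

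\textbf{Step 2: expand in the physical basis and reduce to a per-row sign.} Apply Lemma~\ref{lem:physical-positive-basis} to write $\mathcal R_k$ in the form \eqref{eq:residual-positive-expansion}. In that expansion $m_\chi(\mathbf x)>0$ and $\Phi_{\chi,\rho,m}\ge0$ on the chamber. It therefore suffices to show $p_{\chi,\rho,m}(\mathbf b)\ge0$ for every thread $\chi$, every index $\rho$, every $m$, and every admissible $\mathbf b$.

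\textbf{Step 3: obtain that sign from ordering plus a terminal anchor.} For fixed $(\chi,\rho,\mathbf b)$:
\begin{itemize}
\item Proposition~\ref{prop:projective-ordering} gives the adjacent brackets \eqref{eq:adjacent-bracket} relative to a reference row $H_{\chi,\rho}$ with strictly positive entries.
\item Theorem~\ref{thm:terminal-recombination} uses the hypothesis $C_k\ge0$ (and hence $C_1\ge0$ by Lemma~\ref{lem:specialization}) to give the terminal sign $p_{\chi,\rho,M}(\mathbf b)\ge0$.
\item Lemma~\ref{lem:terminal-propagation} then propagates this terminal sign back along the ratio chain \eqref{eq:physical-ratio-order}, giving $p_{\chi,\rho,m}(\mathbf b)\ge0$ for all $m$.
\end{itemize}
Summing over $\chi,\rho,m$ in \eqref{eq:residual-positive-expansion} yields $\mathcal R_k\ge0$, and hence $C_{k+1}\ge0$. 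Theorem~\ref{thm:order-raising} is then literally this statement.

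\textbf{Points to check.} Two places need explicit bookkeeping.
\begin{itemize}
\item \emph{Boundary points of the chamber.} These are points where some coordinates coincide or a gap variable vanishes. I would handle them by continuity of the polynomial $\mathcal R_k$, using the weak-height extension lemma where needed.
\item \emph{Matching of boundary data.} The retained boundary parameters $\mathbf b$ entering the pointwise $C_k(\mathbf y_\eta)$ and $C_1(t_\xi)$ factors must be evaluated at positive arguments, so that the hypothesis applies. I would check that every $\mathbf y_\eta$ and $t_\xi$ is a positive vector built from $\mathbf x$ and $z$.
\end{itemize}

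\textbf{Main obstacle.} I expect the delicate step to be confirming that the reference row $H_{\chi,\rho}$ has strictly positive entries across the entire range $0\le m\le M_{\chi,\rho}$. Lemma~\ref{lem:terminal-propagation} requires $h_m>0$, and a zero entry in the middle of a row would break the ratio chain. My first attempt would be to use the positive kernel rows $A_a$ and $B^{(a)}_t$ of Theorem~\ref{thm:mixed-kernel}, whose coefficients are all positive, together with the nonnegative Pascal transport of Lemma~\ref{lem:positive-contraction}. Rows with $M$ smaller than expected can be truncated at the terminal index from Lemma~\ref{lem:terminal-selector}.
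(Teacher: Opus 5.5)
Your proposal is correct and is essentially the paper's own proof: you identify the difference of the two sides of \eqref{eq:composition} with $\mathcal R_k=C_{k+1}$ and expand it via \eqref{eq:residual-positive-expansion}. You then combine the terminal sign from Theorem~\ref{thm:terminal-recombination} with the projective ordering of Proposition~\ref{prop:projective-ordering} through Lemma~\ref{lem:terminal-propagation}, exactly as the paper does; the strict positivity of the reference row that you flag as the main obstacle is already part of the conclusion of Proposition~\ref{prop:projective-ordering}, so here it is a cited input rather than an additional step.
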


\begin{proof}
Theorem~\ref{thm:terminal-recombination} gives the terminal sign
$p_{\chi,\rho,M}\ge0$ for every physical row.  Proposition~\ref{prop:projective-ordering}
gives all adjacent projective brackets with a strictly positive reference row.
Lemma~\ref{lem:terminal-propagation} therefore propagates the terminal sign
backward through the entire row and yields
\begin{equation*}
p_{\chi,\rho,m}(\mathbf b)\ge0
\qquad(0\le m\le M).
\end{equation*}
The physical positive-basis expansion
\eqref{eq:residual-positive-expansion} therefore gives
\begin{equation*}
\mathcal R_k(\mathbf x,z)\ge0.
\end{equation*}
By definition this is exactly \eqref{eq:composition}, and
\eqref{eq:CnextR} gives $C_{k+1}=\mathcal R_k\ge0$.
\end{proof}

\section{Proof of the two-variable criterion}\label{sec:main-proof}

We complete the proof of Theorem~\ref{thm:main} by iterating the order-raising theorem.

Assume
\begin{equation*}
P_n(x,y,1,\ldots,1)\ge0
\qquad(x,y>0).
\end{equation*}
By \eqref{eq:C2hyp},
\begin{equation*}
C_2(x,y)\ge0
\qquad(x,y>0).
\end{equation*}
Apply Theorem~\ref{thm:order-raising} successively for
\begin{equation*}
k=2,3,\ldots,n-2.
\end{equation*}
For $n=4$ this is a single step.  No step
$C_{n-1}\Rightarrow C_n$ is needed: the homogeneity reduction below uses only
$C_{n-1}$.  In general,
\begin{equation*}
C_2\ge0
\Longrightarrow
C_3\ge0
\Longrightarrow\cdots\Longrightarrow
C_{n-1}\ge0.
\end{equation*}

Take an arbitrary positive point
\begin{equation*}
(x_1,\ldots,x_n).
\end{equation*}
By homogeneity, divide all variables by $x_n$ and reduce to
\begin{equation*}
(y_1,\ldots,y_{n-1},1).
\end{equation*}
Equation \eqref{eq:P-C} with $k=n-1$ gives
\begin{equation*}
P_n(y_1,\ldots,y_{n-1},1)
=
C_{n-1}(y_1,\ldots,y_{n-1})
\ge0.
\end{equation*}
Undoing the scaling gives
\begin{equation*}
P_n(x_1,\ldots,x_n)\ge0.
\end{equation*}

The reverse implication is obtained by setting
\begin{equation*}
x_3=\cdots=x_n=1.
\end{equation*}
This proves Theorem~\ref{thm:main}.

\begin{corollary}[Collision-wall criterion]\label{cor:collision}
Under the hypotheses of Theorem~\ref{thm:main}, global positivity is
equivalent to nonnegativity on the full locus where at least two variables
coincide.
\end{corollary}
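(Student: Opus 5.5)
The corollary should follow from Theorem~\ref{thm:main} with no new analysis, since the two-variable section $(x,y,1,\ldots,1)$ lies inside the collision locus when $n\ge4$. One direction is trivial: global nonnegativity of $P_n$ on $\R_{>0}^n$ implies nonnegativity on any subset, in particular on the locus where at least two coordinates coincide. For the converse, assume $P_n\ge0$ at every positive point with at least two equal coordinates. The plan is to deduce the hypothesis of Theorem~\ref{thm:main} and then invoke that theorem.

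For $n\ge4$, every point $(x,y,1,\ldots,1)$ with $x,y>0$ has at least $n-2\ge2$ coordinates equal to $1$. It therefore lies on the collision locus, and the assumption gives $P_n(x,y,1,\ldots,1)\ge0$ for all $x,y>0$. By \eqref{eq:C2hyp} this is exactly $C_2\ge0$. Theorem~\ref{thm:main}, or equivalently the iterated order-raising chain $C_2\Rightarrow\cdots\Rightarrow C_{n-1}$ followed by the homogeneity reduction \eqref{eq:Cnminus1}, then yields $P_n\ge0$ on all of $\R_{>0}^n$.

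For completeness, I would also note the symmetric and homogeneous reformulation of the collision locus. Since $P_n$ is symmetric, any point with two equal coordinates can be permuted to have them in the last two slots. Since $P_n$ is homogeneous, that common value can be scaled to $1$. This identifies the locus with the normalized wall $(y_1,\ldots,y_{n-2},1,1)$, i.e.\ with the condition $C_{n-2}\ge0$. The argument above uses only the containment of the $C_2$-section in this wall, so the reformulation is not needed for the proof; it only explains why the corollary is the weaker statement compared with Theorem~\ref{thm:main} when $n\ge5$.

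There is no substantive obstacle. The only point needing care is the hypothesis $n\ge4$ inherited from Theorem~\ref{thm:main}: it guarantees at least two trailing $1$'s in the section and hence its containment in the collision locus. For $n=3$ the section $(x,y,1)$ is not contained in the locus, and the corollary would instead rest on the three-variable result of \cite{SunThree}, which lies outside the stated hypotheses.
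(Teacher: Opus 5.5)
Your proof is correct and is the paper's argument. The forward direction is restriction; for the converse, when $n\ge4$ the section $(x,y,1,\ldots,1)$ lies in the collision locus, so Theorem~\ref{thm:main} applies. Your remark identifying the normalized wall with $C_{n-2}\ge0$ is the same closing observation the paper makes, namely that the theorem obtains this from the weaker hypothesis $C_2\ge0$.
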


\begin{proof}
Global positivity implies the restriction.  Conversely, the full collision
locus contains
\begin{equation*}
(x,y,1,\ldots,1),
\end{equation*}
so Theorem~\ref{thm:main} applies.  Equivalently, after normalizing the two
equal coordinates of a general collision point to $1$, collision-wall
positivity is $C_{n-2}\ge0$; the theorem is stronger because it derives that
condition from the smaller hypothesis $C_2\ge0$.
\end{proof}

\section{Rational exponents and comparison with the three-variable case}

\begin{corollary}[Rational exponents]\label{cor:rational}
Let
\begin{equation*}
\lambda,\gamma,\mu\in\Q_{\ge0}^n
\end{equation*}
have the same total degree and satisfy
\begin{equation*}
\lambda\succ\gamma\succ\mu.
\end{equation*}
For $n\ge4$, the two-variable criterion of
Theorem~\ref{thm:main} remains valid.
\end{corollary}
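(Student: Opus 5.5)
Let $D$ be a positive common denominator of all the entries of $\lambda,\gamma,\mu$, and put $\lambda'=D\lambda$, $\gamma'=D\gamma$, $\mu'=D\mu$. These are nonnegative integer exponent vectors with at most $n$ parts and a common total degree $D|\lambda|$. Majorization is defined by linear inequalities among partial sums of the sorted vectors. Scaling by $D>0$ preserves the sorting and every such inequality. Hence $\lambda'\succ\gamma'\succ\mu'$, and the integer chain satisfies the hypotheses of Theorem~\ref{thm:main}.

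Next, compare the two polynomial families through the substitution $x_i=t_i^D$. On the positive orthant $t_i\mapsto t_i^D$ is a bijection. For every $\nu$ and every $\sigma\in S_n$ we have $\prod_i x_i^{\nu_{\sigma(i)}}=\prod_i t_i^{D\nu_{\sigma(i)}}$, so
\begin{equation*}
J_\nu^{(n)}(t_1^D,\ldots,t_n^D)=J_{D\nu}^{(n)}(t_1,\ldots,t_n),
\end{equation*}
where the left side is a generalized polynomial (a positive function with rational exponents). Applying this to the three terms of the second difference gives $P_n(t_1^D,\ldots,t_n^D)=P'_n(t_1,\ldots,t_n)$, with $P'_n=J_{\lambda'}^{(n)}+J_{\mu'}^{(n)}-2J_{\gamma'}^{(n)}$. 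Because $1^D=1$, the section $(x,y,1,\ldots,1)$ with $x,y>0$ corresponds exactly to the section $(s,t,1,\ldots,1)$ with $s,t>0$, via $x=s^D$ and $y=t^D$.

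The criterion now transfers directly. Suppose $P_n\ge0$ on the rational two-variable section. Then $P'_n\ge0$ on the integer two-variable section, and Theorem~\ref{thm:main} gives $P'_n\ge0$ on all of $\mathbb R_{>0}^n$. Taking $D$-th roots coordinatewise gives $P_n\ge0$ everywhere on the positive orthant. The converse holds by restriction, exactly as in the integer case.

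No step here is a real obstacle. The only point needing care is that the labeled orbit convention is compatible with the scaling. This holds because repeated values in $\nu$ remain repeated in $D\nu$ with the same multiplicities, so the identity for $J_\nu^{(n)}$ holds term by term without adjusting multiplicity factors.
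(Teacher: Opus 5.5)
Your proposal is correct and matches the paper's proof: clear denominators with a common denominator, note that scaling preserves majorization, and substitute $x_i=t_i^D$ so that the rational-exponent orbit sums become integer-exponent orbit sums, to which Theorem~\ref{thm:main} applies. You also make explicit two points the paper leaves implicit: since $1^D=1$, the two-variable section $(x,y,1,\ldots,1)$ corresponds to itself under the substitution, and the converse direction follows by restriction.
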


\begin{proof}
Choose a common denominator $q$ and write
\begin{equation*}
\lambda=\frac1q\widetilde\lambda,
\qquad
\gamma=\frac1q\widetilde\gamma,
\qquad
\mu=\frac1q\widetilde\mu,
\end{equation*}
with integer vectors on the right.  Multiplication by $q$ preserves
majorization.  Set
\begin{equation*}
y_i=x_i^{1/q}>0.
\end{equation*}
The rational-exponent orbit sums in the variables $x_i$ become the
corresponding integer-exponent orbit sums in the variables $y_i$, so
Theorem~\ref{thm:main} applies.
\end{proof}

We leave the extension to arbitrary irrational real exponent vectors open.
A rational approximation argument would have to preserve both majorization
and the required uniform positivity hypothesis.

The dimension hierarchy may therefore be summarized as follows.  For three
variables, the companion theorem \cite{SunThree} is the exceptional
rank-one implication
\begin{equation*}
C_1\ge0
\Longrightarrow
C_2\ge0.
\end{equation*}
For every $n\ge4$, the stable mechanism begins at rank two:
\begin{equation*}
C_2\ge0
\Longrightarrow
C_3\ge0
\Longrightarrow\cdots\Longrightarrow
C_{n-1}\ge0.
\end{equation*}
Proposition~\ref{prop:sharpness} shows that this distinction is genuine.

\section{Structural interpretation and outlook}

The proof has two complementary resolutions.  Globally, it is the induction
\begin{equation*}
C_2\Longrightarrow C_3\Longrightarrow\cdots\Longrightarrow C_{n-1}.
\end{equation*}
Locally, each induction step is controlled by a projective coefficient row:
fixed-union exchange and marked incidence give adjacent projective ordering,
while the raw Green/endpoint-capacity calculation supplies the terminal sign
that fixes the sign of the entire row.  Keeping these two levels separate is
one of the structural features of the argument.

Combinatorially, the injective sum at level $k$ is indexed by $k$-element
subsets of the permanent exponent labels.  Passing from $k$ to $k+1$ is an
adjacent-rank operation on the Boolean lattice.  A pair of adjacent subsets
is organized by its intersection and odd symmetric difference; increasing
$n$ allows longer fixed-union words but does not change the local terminal
species.

The second structure is exterior.  The physical Pascal projection depends
only on the first three factorial moments
\begin{equation*}
1,\qquad m,\qquad m(m-1),
\end{equation*}
so only zero-, one-, and two-mark incidence layers occur.  The local
exterior problem therefore remains rank three in every ambient dimension.
This is the precise sense in which $n=4$ is the first stable case: the
two-position and complementary three-position species already occur there,
and higher dimensions add spectators and longer words rather than new local
analytic species.

For the three-orbit second difference
\begin{equation*}
J_\lambda^{(n)}+J_\mu^{(n)}-2J_\gamma^{(n)},
\end{equation*}
Theorem~\ref{thm:main} therefore gives a degree-independent and
dimension-independent reduction to the fixed two-variable section
$(x,y,1,\ldots,1)$.  Two natural directions remain open.  The first is the
extension from rational to arbitrary real exponents.  The second is the
study of higher discrete differences, for example inequalities involving
four ordered exponent vectors.  The fixed-union exchange theorem may also
be useful independently as a positivity principle for adjacent subset
ranks.

\appendix

\section{The mixed three-row coefficient kernel}
\label{app:mixed}

This appendix proves the local coefficient theorem used in
Theorem~\ref{thm:mixed-kernel}.  The formulation below contains only the
normalized coefficient arrays needed in the main proof.

Fix
\begin{equation*}
0<c<1,
\qquad
d=1-c,
\end{equation*}
and define
\begin{equation*}
H(u)=\frac1{(1-u)^2(1-cu)},
\qquad
Z(u)=\frac{u(1-cu)}{1-u}.
\end{equation*}
For $a\in\{1,2\}$ put
\begin{equation*}\tag{A.1}\label{eq:AaBa}
A_a(u)=1+aZ(u),
\qquad
B_t^{(a)}(u)=A_a(u)H(u)^t
\qquad(t\ge1).
\end{equation*}
The two boundary coefficient rows are
\begin{equation*}\tag{A.2}\label{eq:boundaryrows}
A_1=(1,1,d,d,d,\ldots),
\qquad
A_2=(1,2,2d,2d,2d,\ldots).
\end{equation*}

For a power series $R(u)$, write $R(m)=[u^m]R(u)$.

\begin{theorem}[Universal mixed three-row kernel]
\label{thm:appendix-kernel}
Let $a_0,a_1,a_2\in\{1,2\}$, let $1\le r<s$, and let
\begin{equation*}
0\le q_0<q_1<q_2.
\end{equation*}
Then
\begin{equation*}\tag{A.3}\label{eq:coreminor}
\det
\begin{pmatrix}
A_{a_0}(q_0)&A_{a_0}(q_1)&A_{a_0}(q_2)\\
B_r^{(a_1)}(q_0)&B_r^{(a_1)}(q_1)&B_r^{(a_1)}(q_2)\\
B_s^{(a_2)}(q_0)&B_s^{(a_2)}(q_1)&B_s^{(a_2)}(q_2)
\end{pmatrix}
\ge0.
\end{equation*}
Consequently, the specific common Toeplitz and column-dependent
coefficient transports used here preserve the same nonnegative
upper-compatible $3\times3$ minors, because their convolution matrices are
totally nonnegative and Cauchy--Binet applies.
\end{theorem}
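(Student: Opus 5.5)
The plan is to reduce the $3\times3$ minor to $2\times2$ minors through the explicit rational form of the rows, and then to use the Pólya-frequency structure of $H$. First we clear denominators:
\begin{equation*}
A_1(u)=\frac{1-cu^2}{1-u},\qquad A_2(u)=\frac{1+u-2cu^2}{1-u},
\end{equation*}
so we write $A_a=N_a/(1-u)$ with $N_a$ a quadratic. This gives
\begin{equation*}
B_t^{(a)}=\frac{N_a}{1-u}\,G_t,\qquad G_t(u)=\frac{1}{(1-u)^{2t}(1-cu)^t}.
\end{equation*}
Each $G_t$ is a product of the Pólya frequency factors $1/(1-u)$ and $1/(1-cu)$. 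Hence its Toeplitz coefficient matrix is totally nonnegative, and $G_s=G_rG_{s-r}$ with $G_{s-r}$ again PF.

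\textbf{Column differencing by cases.} We split according to $q_0$, using the explicit boundary rows \eqref{eq:boundaryrows}. If $q_0\ge2$, the first row is the constant $ad$ in all three columns. We subtract column $q_0$ from columns $q_1,q_2$, which kills two entries of the first row. The determinant then becomes $ad$ times a $2\times2$ minor built from the differences $B(q_i)-B(q_0)=\sum_{q_0<m\le q_i}\bigl[(1-u)B\bigr](m)$. These are coefficients of $N_{a}G_t$ summed over intervals. If $q_0\in\{0,1\}$, the first row is $(1,1\text{ or }2,\ast)$ or $(1,ad,ad)$, and the same column operation leaves a first row with at most two nonzero entries. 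A Laplace expansion along that row then produces a nonnegative combination of two such $2\times2$ minors plus one explicit boundary term. In every case the target becomes the statement that the two interval-summed rows coming from $N_{a_1}G_r$ and $N_{a_2}G_s$ are ratio-ordered (TP$_2$) in the column index.

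\textbf{TP$_2$ for the moving rows.} We write $N_{a_2}G_s=(N_{a_2}G_{s-r})\cdot G_r$. The cross-ratio comparison of $N_{a_1}G_r$ and $N_{a_2}G_r\,G_{s-r}$ is then a $2\times2$ Cauchy–Binet sum. One factor is the TN Toeplitz matrix of $G_r$. The other factor is the two-row array formed by $N_{a_1}$ and $N_{a_2}G_{s-r}$. So it suffices to check the $2\times2$ minors of that two-row array. Here $N_{a_1}$ has support in $\{0,1,2\}$, and $s-r\ge1$ contributes at least one factor $1/((1-u)^2(1-cu))$. The interval sums are themselves convolutions with the TN kernel $1/(1-u)$, so Cauchy–Binet (as in Lemma~\ref{lem:positive-contraction}) carries the sign through. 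The final transport statement of the theorem then follows by the same Cauchy–Binet argument with TN convolution matrices.

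\textbf{Main obstacle.} The main obstacle is the negative top coefficient of $N_a$, namely $-c$ or $-2c$. Because of it, $N_{a_1}$ is not itself PF, so the short two-row minors involving columns $0,1,2$ cannot be obtained from total positivity alone. I would first try to absorb the negative coefficient into one factor $1/(1-u)$, since $N_a/(1-u)=A_a$ has nonnegative coefficients $(1,a,ad,ad,\dots)$. I would then verify the finitely many low-index minors directly as polynomials in $d=1-c\in(0,1)$, expanded in the positive variable $d$, separately for each pair $(a_1,a_2)\in\{1,2\}^2$. The large-index minors, where all involved coefficients come from the PF tail, are handled uniformly by the PF argument above.
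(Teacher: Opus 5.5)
\paragraph{Moving rows.} Your treatment of the moving rows works once you make the absorption you mention, and that absorption is forced. In the unabsorbed array $(N_{a_1};N_{a_2}G_{s-r})$, every minor on columns $(2,q)$ with $q\ge3$ equals $-a_1c\,[u^q](N_{a_2}G_{s-r})<0$. So that version cannot be rescued by checking finitely many minors. After absorption, write $N_aG_t=A_a\,(1-u)^{-(2t-1)}(1-cu)^{-t}$. The array $(A_{a_1};B^{(a_2)}_{s-r})$ has positive entries and ratio sequence $1,\ (a_2+(s-r)(2+c))/a_1,\ B^{(a_2)}_{s-r}(m)/(a_1d)$ for $m\ge2$. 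This sequence is nondecreasing, using $B(2)\ge B(1)>dB(1)$. Cauchy--Binet with the PF series $(1-u)^{-(2r-1)}(1-cu)^{-r}$ then gives the TP$_2$ order of $Q_r^{(a_1)}=(1-u)B_r^{(a_1)}$ against $Q_s^{(a_2)}$. This is a cleaner route to the paper's (A.10)--(A.11) than its explicit reverse comparison (A.5)--(A.9).

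\paragraph{The gap: boundary columns.} Your claim that in every case the $3\times3$ minor reduces to TP$_2$ of interval sums of the moving rows is false when $q_0=0$. Put $X=B_r^{(a_1)}$ and $Y=B_s^{(a_2)}$, so that $X_0=Y_0=1$ and $X_m=1+\sum_{i=1}^mQ_X(i)$. For $a_0=1$ and columns $(0,1,2)$, the determinant is
\[
Q_X(1)Q_Y(2)-Q_X(2)Q_Y(1)\;-\;c\bigl(Q_Y(1)-Q_X(1)\bigr),
\qquad
Q_Y(1)-Q_X(1)=a_2-a_1+(s-r)(2+c)>0 .
\]
So the boundary term produced by your Laplace expansion is strictly negative. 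It must be dominated quantitatively by the TP$_2$ part, and this is a genuine comparison between depths $r$ and $s$, not a consequence of total positivity. The same problem occurs in two further cases. One is $a_0=2$ on columns $(0,1,q)$. The other is $a_0=2$ on columns $(0,p,q)$ with $p\ge2$ when $c<1/2$, because then $(1,2d,2d)=2d(1,1,1)+(2c-1)(1,0,0)$ has a negative point-mass coefficient.

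Your TP$_2$ input does settle the remaining cases, via $(a,ad,ad)=ad(1,1,1)+ac(1,0,0)$ and $(1,d,d)=d(1,1,1)+c(1,0,0)$. Those are $q_0\ge1$, and $(a_0,q_0)=(1,0)$ with $q_1\ge2$.

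The exceptional families are not ``finitely many low-index minors.'' They range over all $1\le r<s$ and all $q_2$. The minors on $(0,1,q)$ with $q$ large are not in the PF tail either. So a symbolic check in $d$ for each $(a_1,a_2)$ cannot close them. Two ingredients are missing.
\begin{enumerate}
\item[(i)] A propagation in $q_2$. Write $E_q$ for the $a_0=1$ minor on $(0,1,q)$ and $\delta_X(m)=X_{m+1}-X_m$. For $a_0=1$ the paper uses $E_{q+1}-E_q=\delta_X(0)\delta_Y(q)-\delta_X(q)\delta_Y(0)\ge0$. For $a_0=2$ it uses a mediant argument: $(Y_m-2d)/(X_m-2d)$ increases for $m\ge2$ once it lies below the slope ratio at $m=1$.
\item[(ii)] Base inequalities at column $2$ that hold uniformly in the depths. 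The paper obtains these from the closed forms $B_j^{(a)}(1)=a+j(2+c)$ and $B_j^{(a)}(2)$, together with the one-step monotonicity in $j$ of the ratios $\phi,\psi,\chi$ in (A.14).
\end{enumerate}
Without some substitute for (i) and (ii), the theorem is not proved.
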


\begin{proof}
We first establish a common projective order for the moving rows and their
first differences.

Put
\begin{equation*}
Q_t^{(a)}(u)=(1-u)B_t^{(a)}(u).
\end{equation*}
The proof needs only the projective order of the two base types and the
specific totally nonnegative convolution by $H$; no general $PF_2$ inference
from a factorization is required.  Direct coefficient extraction at $t=1$
gives
\begin{equation*}
Q_1^{(a)}(n)
=
an+\sum_{j=0}^{n}c^j
\qquad(n\ge0).
\end{equation*}
Therefore
\begin{equation*}
\begin{aligned}
&Q_1^{(1)}(n)Q_1^{(2)}(n+1)
-Q_1^{(1)}(n+1)Q_1^{(2)}(n)\\
&\qquad=\sum_{j=0}^{n}c^j-nc^{n+1}>0.
\end{aligned}
\end{equation*}
The last inequality is immediate because $0<c<1$ and each of the $n+1$
terms $c^j$, $0\le j\le n$, is at least $c^{n+1}$.  Thus the base pair
$Q_1^{(1)},Q_1^{(2)}$ has the required strict projective orientation.

Both families satisfy the exact recurrence
\begin{equation*}
Q_{t+1}^{(a)}=Q_t^{(a)}*H.
\end{equation*}
The Toeplitz convolution matrix of
\begin{equation*}
H(u)=\frac1{(1-u)^2(1-cu)}
\end{equation*}
is totally nonnegative: it is the product of the totally nonnegative
Pascal/cumulative convolution matrices for $(1-u)^{-2}$ and $(1-cu)^{-1}$.
Consequently common convolution by a power of $H$ preserves every
$2\times2$ projective minor by Cauchy--Binet.

The only reverse one-step comparison not immediate from the boundary order
is $Q_1^{(2)}$ against $Q_2^{(1)}$.  Put
\begin{equation*}
q_n=[u^n]Q_1^{(2)}(u),
\qquad
p_n=[u^n]Q_2^{(1)}(u).
\end{equation*}
Direct coefficient extraction gives
\begin{equation*}\tag{A.5}\label{eq:qcoeff}
q_n
=
2n+\sum_{k=0}^n c^k
\end{equation*}
and, for $0\le k\le n$ and $R=n-k$,
\begin{equation*}\tag{A.6}\label{eq:pcoeff}
[c^k]p_n
=
\frac{(R+1)(R+2)(R+3k+3)}6.
\end{equation*}
For
\begin{equation*}
W_n=q_np_{n+1}-q_{n+1}p_n,
\end{equation*}
one finds
\begin{equation*}\tag{A.7}\label{eq:Wlower}
[c^k]W_n
=
\frac16
\Bigl(
4R^3+15R^2k+21R^2+18Rk^2+57Rk+29R
+2k^3+39k^2+55k+6
\Bigr)>0
\end{equation*}
for $0\le k\le n$,
\begin{equation*}\tag{A.8}\label{eq:Wedge}
[c^{n+1}]W_n
=
\frac{(n+2)(n^2+16n+3)}6>0,
\end{equation*}
and
\begin{equation*}\tag{A.9}\label{eq:Wupper}
[c^k]W_n
=
\binom{2n-k+4}{3}>0
\end{equation*}
for $n+2\le k\le2n+1$.  Thus the reverse one-step pair $Q_1^{(2)},Q_2^{(1)}$ has the same strict
projective orientation.  Combining it with the base inequality above gives
the ordered chain
\begin{equation*}
Q_1^{(1)}\prec Q_1^{(2)}\prec Q_2^{(1)}.
\end{equation*}
Common convolution by $H^{r-1}$ yields
\begin{equation*}
Q_r^{(1)}\prec Q_r^{(2)}\prec Q_{r+1}^{(1)}
\qquad(r\ge1).
\end{equation*}
Iterating this chain proves, for every $1\le r<s$ and every
$a,b\in\{1,2\}$, the strict projective order between $Q_r^{(a)}$ and
$Q_s^{(b)}$.

Finally,
\begin{equation*}
B_t^{(a)}=Q_t^{(a)}*\frac1{1-u}
\end{equation*}
and the cumulative-sum convolution matrix of $(1-u)^{-1}$ is totally
nonnegative.  Hence Cauchy--Binet gives
\begin{equation*}\tag{A.10}\label{eq:movingTP2}
B_r^{(a)}(p)B_s^{(b)}(q)
-
B_r^{(a)}(q)B_s^{(b)}(p)>0
\qquad(p<q).
\end{equation*}
Moreover
\begin{equation*}
\delta_t^{(a)}(m)
:=B_t^{(a)}(m+1)-B_t^{(a)}(m)
=Q_t^{(a)}(m+1),
\end{equation*}
so the already-proved $Q$-row order gives directly
\begin{equation*}\tag{A.11}\label{eq:slopeTP2}
\delta_r^{(a)}(p)\delta_s^{(b)}(q)
-
\delta_r^{(a)}(q)\delta_s^{(b)}(p)>0.
\end{equation*}

It remains to check the finite boundary positions created by the two rows in
\eqref{eq:boundaryrows}.  Put
\begin{equation*}
\ell=2+c.
\end{equation*}
Direct extraction gives
\begin{equation*}\tag{A.12}\label{eq:B1B2}
B_j^{(a)}(1)=a+j\ell
\end{equation*}
and
\begin{equation*}\tag{A.13}\label{eq:B2}
B_j^{(a)}(2)
=
\frac12\bigl(j^2\ell^2+j(2+c^2)\bigr)
+a(j\ell+d).
\end{equation*}
Define
\begin{equation*}
\phi_a(j)
=
\frac{B_j^{(a)}(2)}{B_j^{(a)}(1)-1},
\end{equation*}
\begin{equation*}
\psi_a(j)
=
\frac{B_j^{(a)}(2)-B_j^{(a)}(1)}
{B_j^{(a)}(2)-2d},
\end{equation*}
and
\begin{equation*}
\chi_a(j)
=
\frac{B_j^{(a)}(2)-2d}
{B_j^{(a)}(1)-2}.
\end{equation*}
All denominators are positive.  Substituting
\eqref{eq:B1B2}--\eqref{eq:B2} and clearing denominators shows, for every
$a,b\in\{1,2\}$ and every $j\ge1$,
\begin{equation*}\tag{A.14}\label{eq:ratio-step}
\phi_b(j+1)>\phi_a(j),
\qquad
\psi_b(j+1)>\psi_a(j),
\qquad
\chi_b(j+1)>\chi_a(j).
\end{equation*}
For completeness, the cleared numerators are displayed below.  Put
$R=j-1\ge0$.

For $\phi$ the four type transitions have numerators
\begin{equation*}
\begin{aligned}
\Phi_{11}={}&
R^2c^2+4R^2c+4R^2+3Rc^2+12Rc+12R+2c^2+10c+6,\\
\Phi_{12}={}&
R^2c^3+7R^2c^2+16R^2c+12R^2
+3Rc^3+19Rc^2+48Rc+38R\\
&+2c^3+14c^2+36c+20,\\
\Phi_{21}={}&
R^2c^3+5R^2c^2+8R^2c+4R^2
+3Rc^3+19Rc^2+32Rc+18R\\
&+2c^3+20c^2+32c+18,\\
\Phi_{22}={}&
R^2c^3+6R^2c^2+12R^2c+8R^2
+3Rc^3+20Rc^2+44Rc+32R\\
&+2c^3+20c^2+44c+30.
\end{aligned}
\end{equation*}
For $\psi$, after suppressing a common positive factor $2$, they are
\begin{equation*}
\begin{aligned}
\Psi_{11}={}&
R^2c^3+6R^2c^2+12R^2c+8R^2
+7Rc^3+32Rc^2+44Rc+16R\\
&+10c^3+34c^2+44c+2,\\
\Psi_{12}={}&
R^2c^3+7R^2c^2+16R^2c+12R^2
+7Rc^3+37Rc^2+62Rc+26R\\
&+10c^3+38c^2+68c+4,\\
\Psi_{21}={}&
R^2c^3+5R^2c^2+8R^2c+4R^2
+7Rc^3+29Rc^2+34Rc+14R\\
&+10c^3+40c^2+42c+10,\\
\Psi_{22}={}&
R^2c^3+6R^2c^2+12R^2c+8R^2
+7Rc^3+34Rc^2+52Rc+24R\\
&+10c^3+44c^2+68c+16.
\end{aligned}
\end{equation*}
For $\chi$ they are
\begin{equation*}
\begin{aligned}
X_{11}={}&
R^2c^3+6R^2c^2+12R^2c+8R^2
+3Rc^3+16Rc^2+28Rc+16R\\
&+2c^3+6c^2+8c+2,\\
X_{12}={}&
Rc^3+7Rc^2+16Rc+12R+c^3+3c^2+6c+2,\\
X_{21}={}&
Rc^3+5Rc^2+8Rc+4R+2c^3+12c^2+18c+10,\\
X_{22}={}&
\frac{c+2}{2}.
\end{aligned}
\end{equation*}
Every displayed coefficient is nonnegative and each polynomial is nonzero,
proving \eqref{eq:ratio-step}.  Iteration gives
\begin{equation*}\tag{A.15}\label{eq:ratio-order}
\phi_b(s)>\phi_a(r),
\qquad
\psi_b(s)>\psi_a(r),
\qquad
\chi_b(s)>\chi_a(r)
\end{equation*}
whenever $1\le r<s$.

For the boundary row $A_1=(1,1,d,d,\ldots)$ and selected columns
$q_0<q_1<q_2$, the constant part is nonnegative by
\eqref{eq:slopeTP2}, and the point-mass part is nonnegative by
\eqref{eq:movingTP2}.  The only exceptional prefix has columns
$(0,1,q)$, $q\ge2$.  If
\begin{equation*}
X_m=B_r^{(a)}(m),
\qquad
Y_m=B_s^{(b)}(m),
\end{equation*}
its determinant $E_q$ satisfies
\begin{equation*}
E_{q+1}-E_q
=
\delta_r^{(a)}(0)\delta_s^{(b)}(q)
-
\delta_r^{(a)}(q)\delta_s^{(b)}(0)>0
\end{equation*}
by \eqref{eq:slopeTP2}, while
\begin{equation*}
E_2
=
(X_1-1)(Y_1-1)
\bigl(\phi_b(s)-\phi_a(r)\bigr)>0
\end{equation*}
by \eqref{eq:ratio-order}.  Hence every $A_1$ boundary minor is
nonnegative.

For $A_2=(1,2,2d,2d,\ldots)$ there are four cases.  If $q_0\ge2$, column
subtraction reduces the determinant to \eqref{eq:slopeTP2}.  If $q_0=1$,
write
\begin{equation*}
(2,2d,2d)=2d(1,1,1)+2c(1,0,0)
\end{equation*}
and use \eqref{eq:slopeTP2} and \eqref{eq:movingTP2}.  If
$q_0=0$ and $q_1=p\ge2$, column subtraction gives
\begin{equation*}
D_{0,p,q}
=
(X_p-2d)(Y_q-2d)
-
(X_q-2d)(Y_p-2d).
\end{equation*}
Put
\begin{equation*}
\widehat X_m=X_m-2d,
\qquad
\widehat Y_m=Y_m-2d.
\end{equation*}
These are positive.  The one-step $\psi$ inequality gives
\begin{equation*}
\frac{\widehat Y_2}{\widehat X_2}
<
\frac{\delta_s^{(b)}(1)}{\delta_r^{(a)}(1)}.
\end{equation*}
The slope ratios
\begin{equation*}
\frac{\delta_s^{(b)}(m)}{\delta_r^{(a)}(m)}
\end{equation*}
increase by \eqref{eq:slopeTP2}; hence
$\widehat Y_m/\widehat X_m$ increases with $m$, proving
$D_{0,p,q}\ge0$.

Finally, for columns $(0,1,q)$,
\begin{equation*}
D_{0,1,q}
=
(X_1-2)(Y_q-2d)
-
(X_q-2d)(Y_1-2).
\end{equation*}
At $q=2$ this equals
\begin{equation*}
(X_1-2)(Y_1-2)
\bigl(\chi_b(s)-\chi_a(r)\bigr)>0,
\end{equation*}
and the preceding monotonicity propagates the sign to all $q\ge2$.
This proves \eqref{eq:coreminor}.

The specific Toeplitz convolution matrices used in this coefficient
transport are totally nonnegative; applying Cauchy--Binet to those matrices
preserves the required nonnegative minors.  This proves the final assertion.
No generic claim that an arbitrary positive convolution preserves minors is
used.
\end{proof}

\begin{remark}
The theorem is used only after the symmetric middle states have been paired.
An unsymmetrized two-middle statement is false in general and is not needed.
\end{remark}

\end{document}